\pgfplotsset{compat=1.18}
\numberwithin{equation}{section}
\newtheorem{theorem}{Theorem}[section]
\newtheorem{lemma}[theorem]{Lemma}
\newtheorem{corollary}[theorem]{Corollary}
\newtheorem{prop}[theorem]{Proposition}
\newtheorem{definition}[theorem]{Definition}
\newtheorem{remark}[theorem]{Remark}
\newtheorem{assumption}[theorem]{Assumption}
\newenvironment{namedalg}[1]
  {\alg}
  {\endalg}
\def\itemrange#1{%
\addtocounter{enumi}{1}%
\edef\labelenumi{\theenumi--\noexpand\theenumi.}%
\addtocounter{enumi}{-1}%
\addtocounter{enumi}{#1}%
\item
\def\labelenumi{\theenumi.}}
\newcommand{\bb}{\mathbb}
\newcommand{\R}{\bb R}
\newcommand{\Z}{{\bb Z}}
\newcommand{\E}{\mathbb{E}}
\newenvironment{APPENDICES}{%
  \appendix
  \section*{Appendix}%  % optional heading; remove if you don't want it
}{%
}
\DeclareMathOperator*{\argmax}{arg\,max}
\begin{document}
%%%%%%%%%%%%%%%%

% Standard LaTeX title block — fill in your info
\title{Approximation Algorithms for Line Planning with Heterogeneous Fleets and Multiple
Resource Constraints}

\author{
  Hongyi Jiang\thanks{Department of Systems Engineering, City University of Hong Kong, \texttt{hongyi.jiang@cityu.edu.hk}}
  \and
  Igor Averbakh\thanks{Department of Management, University of Toronto Scarborough; Rotman School of Management, University of Toronto, \texttt{igor.averbakh@rotman.utoronto.ca}}
  \and
  Samitha Samaranayake\thanks{School of Civil and Environmental Engineering, Cornell University, \texttt{samitha@cornell.edu}}
}

\date{} % leave empty for no date, or put a date

\maketitle

\begin{abstract}
This paper studies line planning for urban bus networks that face multiple resource limits such as budget, labor, and emission caps while using heterogeneous fleets. The objective is to maximize total reward from serving passengers by assigning buses to candidate routes subject to capacity and resource constraints. The reward parameters are general and can encode diverse user preferences and multi-modal system configurations. Prior work typically assumes single resource constraints and homogeneous fleets, and often relies on methods that lack theoretical guarantees or computational tractability. We develop the first approximation algorithms with provable guarantees for this setting. For the cost-free variant, a randomized rounding scheme attains the optimal ratio $1-1/e$ which is tight unless $P = NP$. Leveraging this base algorithm, we derive extensions for the general case with arbitrary cost vectors, obtaining constant-factor approximation guarantees. To support large-scale application, we adapt the base algorithm to ensure computational scalability while preserving rigorous theoretical guarantees. Experiments on Greater Boston transit data demonstrate that our approach achieves 95\% to 98\% of the linear programming relaxation bound, whereas Gurobi solver fails on considerably smaller instances. Our experiments further show that heterogeneous fleets significantly outperform homogeneous ones and that multi-resource optimization is required to avoid significant resource limit violations, thereby underscoring the importance of our framework.
\end{abstract}

\noindent\textbf{Keywords:} Line planning; Approximation algorithms; Assignment

%%%%%%%%%%%%%%%%%%%%%%%%%%%%%%%%%%%%%%%%%%%%%%%%%%%%%%%%%%%%%%%%%%%%%%

{\section{Introduction}

Urban transit systems face increasing demand driven by population growth, expanding cities, and stricter environmental regulations. Transit agencies must therefore optimize bus planning, a task that goes well beyond simply matching passenger demand with vehicle capacity. In practice, planners have to balance multiple constraints, including tight budgets, labor availability, and emission caps, while assigning a diverse fleet across competing routes. This creates a large-scale combinatorial optimization problem that must be solved within strict resource limits.
Addressing these challenges requires models that advance beyond traditional methods. While foundational research by \cite{newell1979some} and \cite{mandl1980evaluation} established important baselines for reducing costs and maximizing coverage, these early models were not built for environments with multiple constraints. Furthermore, despite the recent progress documented by \cite{duran2022survey} and \cite{schmidt2024planning}, current approaches still struggle to fully capture the complexities of modern transit networks.

First, the existing literature largely focuses on \textbf{single-resource constraints} and \textbf{homogeneous vehicle fleets}. Work on transit network design (\cite{szeto2014transit,buba2018differential,ahmed2019solving}) and research on line planning problem (\cite{perivier2021real,jiang2022approximation,jiang2024approximation}) exemplify this limitation. However, real-world operations involve complex trade-offs, such as balancing high-capacity electric buses (high purchase cost, low emissions) against conventional diesel buses (low purchase cost, high emissions). Modern systems therefore require multi-resource management for fleets with varying capacities, fuel types, and route restrictions (\cite{kocc2016thirty, schmidt2024planning}).

Second, current methodologies generally lack \textbf{theoretical performance guarantees}. Without such bounds on solution quality, transit planners have limited insight into how far their solutions deviate from the optimal, especially across diverse and complex scenarios where intuition cannot reliably gauge solution quality. While heuristic and metaheuristic methods are often used in practice to handle computational challenges \citep{iliopoulou2019metaheuristics,duran2022survey}, the absence of formal guarantees can be problematic for making informed planning decisions.

Finally, current frameworks suffer from critical limitations regarding \textbf{computational scalability}. Even moderately sized networks must evaluate hundreds of possible routes, coordinate diverse vehicle fleets, and serve tens of thousands of daily trips. Given this complexity, exact optimization becomes intractable (\cite{ibarra2015planning, goerigk2017line}), necessitating approximation algorithms that deliver high-quality solutions quickly.

\subsection{Summary of Contributions}

In response to these challenges, this paper develops a comprehensive algorithmic framework for transit line planning under multiple resource constraints that reflects real-world demand while offering rigorous and computationally tractable solutions. We consider a transportation system with a fleet of buses, potentially heterogeneous in capacity and operational characteristics, serving a collection of origin-destination (OD) pairs. Each bus operates on at most one line selected from a set of candidate routes where operation incurs costs across multiple resource types such as budget, labor, and emissions. The central challenge is to efficiently assign buses to lines while serving integral portions of passenger demands, strictly respecting bus capacities, resource limits, and demand constraints.

First, we introduce a \textbf{multi-resource heterogeneous fleet framework} equipped with a \textbf{rich and passenger-preference-aware setting} via our \emph{Line Planning with Resource Constraints} (LPRC) model. This formulation captures the complex decisions faced by transit agencies, including the simultaneous management of multiple resources and heterogeneous fleets. Moreover, the flexible utility framework accommodates user preferences such as bus type or road quality, as well as multi-modal system configurations involving first- and last-mile access via walking, or other auxiliary services. This generalizes existing models (\cite{perivier2021real,bertsimas2021data,jiang2022approximation,jiang2024approximation}), which are typically limited to single resources and homogeneous fleets, and is applicable to a broad class of assignment problems in transportation and logistics as discussed in Section \ref{sec:related-work}. Our numerical experiments in Section \ref{sec:computational-exp} quantify the value of this generalization. By effectively balancing distinct resource constraints, our heterogeneous-fleet approach yields an over 20\% higher objective value compared to optimal homogeneous solutions, even when homogeneous baselines enjoy superior routing flexibility. Furthermore, we demonstrate that simplifying the problem to a single constraint produces significantly infeasible plans with total budget overruns reaching up to 145\%.

Second, we provide algorithms with \textbf{rigorous theoretical guarantees} by building on the LPRC formulation to develop a suite of randomized approximation strategies. For the case with no line costs, we achieve the optimal approximation ratio of $1 - \frac{1}{e}$ (Section \ref{sec:no-line-cost}), which we prove cannot be improved unless $P = NP$. While this algorithm extends core ideas from \cite{fleischer2011tight}, our novel analysis addresses nontrivial complexities arising from divisible OD pair demands and variable reward values caused by candidate line assignment. The significance of this fundamental result is further underscored by two key corollaries presented in Section \ref{sec:extensions}. First, we generalize the framework to handle arbitrary costs, achieving an approximation ratio of $\frac{1}{2} - \frac{1}{2e} - \eta$ for any fixed $\eta > 0$. Second, we demonstrate that under slight resource augmentation, this guarantee improves to a near-optimal $1 - \frac{1}{e} - \eta$.

Finally, we implement \textbf{practical computational enhancements} on our algorithms. These include (i) a bus-aggregation technique (Proposition~\ref{prop:aggregate}) that reduces LP size without loss of optimality, (ii) sophisticated parameter tuning strategies (Proposition \ref{prop:alg-R-approx}) that automatically maximize the theoretical approximation guarantee for specific instances, and (iii) multi-stage improvements involving intelligent conflict resolution and residual capacity utilization. Extensive experiments using real-world transit datasets (\cite{mbta2014,bertsimas2021data}) show that our approach consistently attains 95\% to 98\% of the LP relaxation objective value on instances with up to 120 buses, 6 vehicle types, and nearly 94,000 passenger requests. This demonstrates superior scalability compared to commercial solvers like Gurobi, which encounter out-of-memory failures on problems with 90 or more buses. Extensive experiments on synthetic datasets are provided in Appendix~\ref{sec:app-synthetic}, which further demonstrate the consistency of our results across diverse problem instances.}

% Our comparison focuses on Gurobi as the primary baseline for two reasons. First, recent surveys indicate that specialized exact algorithms such as Branch-and-Price are rarely applied in transit network design, as they remain limited to small, simplified instances or specific graph structures (\cite{duran2022, schmidt2024}). Second, existing heuristics do not accommodate multi-resource heterogeneous fleet settings, and developing problem-specific heuristics lies outside the scope of this work.

\subsection{Paper Organization}

The remainder of this paper is structured as follows. Section \ref{sec:related-work} reviews related work and positions our contributions within the existing literature on line planning and approximation algorithms. Section \ref{sec:formulation} presents our mathematical model and formulation. Sections \ref{sec:no-line-cost} through \ref{sec:extensions} develop our main theoretical results, progressing from simpler scenarios to the most general case. Section \ref{sec:practical} develops a practical algorithm with computational enhancements, which is then evaluated in Section \ref{sec:computational-exp}. Through comprehensive experiments in Section \ref{sec:computational-exp}, we first validate the necessity of the multi-resource, heterogeneous fleet model and then demonstrate our algorithm's near-optimal performance on large-scale instances that are intractable for state-of-the-art commercial solvers. We also perform an ex-post analysis to confirm that our assignments align with passengers' route preferences. Section \ref{sec:conclusion} summarizes our contributions and outlines future work.

\section{Related Work}\label{sec:related-work}

We review the most closely related lines of work, positioning our contributions within this existing literature.

{\textbf{Transit Line Planning. } 
Several recent surveys provide comprehensive overviews of models, algorithms, and research directions in transit line planning and transit network design. \cite{schmidt2024planning} offer an up-to-date and systematic review for transit line planning. \cite{duran2022survey} focus on developments in transit network design and frequency setting, while \cite{iliopoulou2019metaheuristics} survey metaheuristic approaches for the transit network design problem. %Additionally, \cite{schiewe2024lintim} introduce the LinTim research library, supporting the development and benchmarking of line planning algorithms.

From an algorithmic perspective, recent advances include data-driven optimization frameworks based on linear programming relaxations (\cite{bertsimas2021data}),  new complexity results and exact algorithms for non-pool-based line planning (\cite{heinrich2022algorithms,heinrich2023non}), and bi-objective metaheuristics optimizing heterogeneous fleets under environmental constraints (\cite{duran2020considering}). However, existing methods either lack theoretical performance guarantees or are limited to single-resource constraints. Specialized exact algorithms, such as Branch-and-Price, are rarely applied in transit network design and line planning, as they remain restricted to small, simplified instances or specific graph structures (\cite{duran2022survey,schmidt2024planning}). For instance, while \cite{borndorfer2007column} dynamically generate lines and solve the linear programming relaxation of the line-planning problem using column generation , they ultimately rely on a greedy heuristic to construct a feasible integer solution rather than employing a full Branch-and-Price algorithm. Furthermore, existing heuristics do not accommodate the multi-resource, heterogeneous fleet setting of LPRC. Consequently, we choose Gurobi as the baseline in our study.

Recent work has developed approximation algorithms with theoretical guarantees for transit line planning problems involving homogeneous bus fleets and single resource constraint.  \cite{perivier2021real} explore the Real-Time Line Planning Problem (RLPP), which can be considered as a special case of our problem. Specifically, in RLPP there is only a single resource constraint, the fleet consists of homogeneous buses that share the same candidate line set, and each origin-destination pair has a maximum demand of 1, which requires duplicating demands to handle practical problem sizes. To address RLPP, the authors employ a randomized rounding algorithm drawing inspiration from \cite{fleischer2011tight}, yielding a $(1-\frac{1}{e}-\epsilon)$-approximate solution with resource constraint violation probability bounded by $e^{-\frac{k}{3}\epsilon^2}$, where $k=\frac{1}{\max_{b}c_b}$ and $c_b$ denotes the cost for bus $b$.

Building on this work, \cite{jiang2022approximation} propose a randomized algorithm specifically for RLPP, attaining an approximation ratio of $\frac{1}{2}(1-\frac{1}{\sqrt{k}}-\epsilon)$. This work is further extended in \cite{jiang2024approximation}, where the algorithm is improved to achieve a constant approximation ratio of $\frac{1}{8}$.

While these algorithms offer theoretical guarantees, they are restricted to single resource constraints and homogeneous fleets. Real-world operations, however, involve multiple competing criteria and heterogeneous fleets (\cite{pternea2015sustainable, duran2020considering, ahern2022approximate}). Given the resulting combinatorial complexity, most approaches rely on heuristics to address these factors.

Our work substantially generalizes the theoretical approaches of \cite{perivier2021real,jiang2022approximation,jiang2024approximation} by incorporating multiple resource constraints, heterogeneous buses with distinct candidate line sets, and more efficient demand aggregation. These generalizations make the algorithms and analyses from previous work inapplicable to our setting, while our approximation ratios improve upon those achieved in these earlier papers.}

\textbf{Separable Assignment Problems and Theoretical Foundations.} The resource-unconstrained version of our problem can be considered as a generalization of the Separable Assignment Problem (SAP). In the standard SAP, there are $L$ bins and $P$ items. A reward $v_{lp}$ is generated when item $p$ is assigned to bin $l$. Additionally, each bin $l$ has its own packing constraint, which dictates that only certain subsets of items can be assigned to bin $l$. The goal is to optimize the total reward by finding an assignment of items to bins that satisfies all packing constraints and ensures that each item is allocated to at most one bin. \cite{fleischer2011tight} present a randomized $(1-\frac{1}{e})$-approximation algorithm for SAP, contingent upon the existence of a polynomial-time algorithm for the single-bin subproblem.

Our work extends this foundation but addresses two key differences that distinguish our line planning problem from SAP: (1) each ``item" (corresponding to passenger demand) can be divided into integral portions allocated to different ``bins" (buses), and (2) more importantly, the reward for assigning a unit of the same item to the same bin can vary (as it depends not only on the bus but also on the route), whereas in SAP it remains constant. To the best of our knowledge, no existing work addresses with theoretical guarantees the challenges posed by the combination of these two generalizations, even without resource constraints.

\textbf{Transportation-Specific Assignment Models.} Recent research has also explored applying assignment-type models to optimizing ride-sharing systems. \cite{gu2024algorithms} address the integration of public transit and ride-sharing of personal vehicles. They formulate the problem as an integer linear programming (ILP) model, which is a special case of SAP studied by \cite{fleischer2011tight}. \cite{luo2023efficient} investigate maximizing utility in ride-pooling assignments using a shareability graph representation. For their model it is not possible to develop a constant approximation ratio algorithm, unless $P=NP$, as it can be reduced to the multi-dimensional matching problem (\cite{hazan2006complexity}). These works illustrate the potential of assignment-type models (and, therefore, our study) for optimizing ride-sharing systems. For further literature in this area, readers can explore the references provided in \cite{luo2023efficient} and \cite{gu2024algorithms}.

\textbf{Assignment Problem Variants.} Our framework connects to the broader literature on assignment problems. SAP itself generalizes the Generalized Assignment Problem (GAP) (\cite{fisher1986multiplier}), where items with bin-specific sizes and values must be packed into capacity-constrained bins to maximize the total value. While related problems like the all-or-nothing generalized assignment problem (AGAP) (\cite{adany2016all,sarpatwar2018generalized}) and hypermatching assignment problem (HAP) (\cite{cygan2013sell}) have constant approximation algorithms, their different objectives and structures limit their applicability to our problem. GAP itself generalizes the multiple knapsack problem (\cite{hung1978algorithm}) by allowing bin-dependent item values and capacities.

\textbf{Submodular Optimization and its Inapplicability.} Another extensively studied area involves optimizing nonnegative submodular functions subject to knapsack constraints. Numerous algorithmic developments have been made in this direction (\cite{kulik2009maximizing,lee2009non, fadaei2011maximizing, chekuri2014submodular, feldman2011unified}). However, methods developed for maximizing submodular functions are not directly applicable to our problem because of the absence of the submodularity property (\cite{perivier2021real}). This holds true even when fractional assignment of demand is allowed.

\section{Problem Formulation}\label{sec:formulation}

To formalize the transit line planning problem, we consider a transportation network with a heterogeneous fleet of $M$ buses. Each bus $b$ can be assigned to a line chosen from a candidate line set $\mathcal{L}_b$, where each line is a directed path. The system serves a set $D$ of Origin-Destination (OD) pairs. Each pair $(i,j)$ has a positive integer demand $d_{(i,j)}$. A bus $b$ operating on line $l$ can carry an integer portion of this demand, denoted by $\xi_{b,l,(i,j)} \in [0,d_{(i,j)}]\cap \Z$, provided passengers of OD pair $(i,j)$ can access a boarding stop and reach their destination from an alighting stop on line $l$. This service yields a reward of $v_{b,l,(i,j)} \cdot \xi_{b,l,(i,j)}$, where $v_{b,l,(i,j)} \geq 0$ is a reward parameter that can be calibrated to reflect user preferences and system-level considerations (e.g., first- and last-mile access costs in a multi-modal network); see Section~\ref{sec:model-setting} for a detailed discussion. Let $D^{b,l} \subset D$ denote the set of OD pairs $(i,j)$ with $v_{b,l,(i,j)} > 0$, i.e., those that can be meaningfully served by bus $b$ on line $l$. For each bus $b$ and line $l \in \mathcal{L}_b$, every OD pair $(i,j) \in D^{b,l}$ is assumed to have a unique sub-path on line $l$ operated by bus $b$.

The system operates under $K$ types of resource constraints (e.g., financial budgets or emission caps). We normalize the available budget for each resource to 1. Consequently, assigning bus $b$ to operate on line $l$ incurs a normalized cost $c_{b,l}^{(k)} \in [0, 1]$ for each resource $k \in \{1,\dots,K\}$. Unused buses incur zero costs.

Let $D^{b,l}(e) \subseteq D^{b,l}$ be the set of OD pairs whose passengers traverse arc $e$ on bus $b$ along line $l$. Additionally, let $\mathcal{L}_b(i,j) \subseteq \mathcal{L}_b$ denote the subset of lines that can serve OD pair $(i,j)$ (i.e., $v_{b,l,(i,j)} > 0$).

The objective is to maximize total generated reward by simultaneously optimizing bus-to-line assignments and passenger demand allocation. We define the problem, \textbf{Line Planning with Resource Constraints (LPRC)}, using the following constraints:

\begin{enumerate}    %\itemsep -1mm 
    %\parsep 0mm 
    \item Each bus is assigned to exactly one line. For clarity, we assume that each $\mathcal{L}_b$ contains a ``dummy" line with 0 costs and 0 rewards not passing through any OD pair; assigning a bus to that line is equivalent to not using the bus.
    \item The total usage of the resources incurred by line assignment  should not exceed the given limit for each resource type. Note that we scaled all resource limits to 1, with their costs scaled proportionally.
    \item For each OD pair $(i,j)$, the total demand served by all buses must not exceed $d_{(i,j)}$.
    \item Only an integer portion of the demand of any OD pair can be served by any bus.
    \item For a particular bus \(b\) and line \(l\in \mathcal{L}_b\), if bus $b$  operates on line $l$, the total demand carried by the bus at any point (in any arc) should not exceed the capacity of the bus $C_b\in \Z$, i.e., the demand allocation should reside in $\mathcal{P}(b,l)\cap \Z^{|D^{b,l}|}$, where $\mathcal{P}(b,l)$ is a polyhedron defined as follows:
    %\vspace{-0.5em}
\begin{align}\label{eq:P(b,l)}
    \mathcal{P}(b,l):=\Bigg\{\xi_{b,l}\in \prod_{(i,j)\in D^{b,l}}\left[0,d_{(i,j)}\right]:&\sum_{(i,j)\in D^{b,l}(e)}\xi_{b,l,(i,j)}\leq C_b,~~\forall e\in l \Bigg\}.
\end{align}

%\begin{align*}
    %\mathcal{P}(b,l):=\Bigg\{\xi_{b,l}\in \prod_{(i,j)\in D}\left[0,d_{(i,j)}\right]:&\sum_{\substack{(i,j)\in D:\\e\in %r_{(i,j),l}}}\xi_{b,l,(i,j)}\leq C_b,~~\forall e\in r_l\Bigg\},\mbox{ for } b\in [M],l\in \mathcal{L}_b.
%\end{align*}

%$$\left\{\xi_{b,l}\in [0,1]^{|D|}:\sum_{\substack{(i,j):e\in r_{(i,j),l}}}d_{(i,j)}\xi_{b,l,(i,j)}\leq C_b ,~~\forall e\in r_l\right\},\mbox{ for } b\in [M],l\in \mathcal{L}_b$$
%\noindent  This ensures that the cumulative demand carried by bus $b$ in any arc $e$ of line $l$ does not exceed the capacity of that bus, $C_b$.

\end{enumerate}

LPRC is formulated as a linear integer programming problem that seeks to optimize both binary decisions (selection of lines for buses) and bounded-integer decisions (the demand portions to be served by specific buses) simultaneously. 
It can be formulated as follows:
\begin{subequations}\label{eq:ILP-original}
\begin{align}
    \max_{\lambda,\xi}~~~~~~~~~~&\sum_{b\in [M]}\sum_{l\in \mathcal{L}_b}\sum_{(i,j)\in D^{b,l}}v_{b,l,(i,j)}\xi_{b,l,(i,j)}\\
    s.t.~~~~~~~~~~~~& \sum_{l \in \mathcal{L}_b}\lambda_{b,l} = 1~~~~\forall b\in [M]\label{eq:ILP-one-line-each-bus}\\
    &\sum_{b\in [M]}\sum_{l\in \mathcal{L}_b} c_{b,l}^{(k)}\cdot \lambda_{b,l}\leq 1~~~~\forall k\in [K]\label{eq:ILP-bedget}\\
    &\sum_{b\in [M]}\sum_{l\in \mathcal{L}_b(i,j)}\xi_{b,l,(i,j)}\leq d_{(i,j)}~~~~\forall (i,j)\in D\label{eq:ILP-respect-demand}\\
    &\xi_{b,l,(i,j)}\leq \lambda_{b,l}\cdot d_{(i,j)} ~~~\forall b\in [M], l\in \mathcal{L}_b,(i,j)\in D^{b,l} \subseteq D\label{eq:ILP-serve-by-open-line}\\
    &\left(\xi_{b,l,(i,j)}\right)_{(i,j)\in D^{b,l}}\in \mathcal{P}(b,l)~~~\forall b\in [M], l\in \mathcal{L}_b\label{eq:ILP-capacity-constraints}\\
    &\lambda_{b,l}\in \{0,1\}~~~\forall b\in [M], l\in \mathcal{L}_b\\
    &\xi_{b,l,(i,j)}\in \Z~~~\forall b\in [M], l\in \mathcal{L}_b,(i,j)\in D^{b,l} \subseteq D
\end{align}
\end{subequations}

\noindent Here, the binary decision variable $\lambda_{b,l}$  indicates whether bus $b$ is assigned to line $l$ or not. If $\lambda_{b,l}=1$, bus $b$ is assigned to line $l$. If $\lambda_{b,l}=0$, bus $b$ is not assigned to line $l$. The integer decision variable $\xi_{b,l,(i,j)}$ represents the demand from an OD pair $(i,j)$ that is served by bus $b$ on line $l$. 

The objective function represents the total generated reward (or social welfare).
Constraint (\ref{eq:ILP-one-line-each-bus}) ensures that each bus $b$ is assigned to exactly one line from its candidate line set $\mathcal{L}_b$. Constraint (\ref{eq:ILP-bedget}) guarantees that the costs incurred by line assignments do not exceed the available resources.
Constraint (\ref{eq:ILP-respect-demand}) ensures that the total demand for each OD pair is not over-served.  Constraint (\ref{eq:ILP-serve-by-open-line}) ensures that we can only allocate a portion of the demand of OD pair $(i,j)$ to bus $b$ operating on line $l$ if bus $b$ runs on line $l$. Constraint (\ref{eq:ILP-capacity-constraints}) ensures that the total demand served by a bus on a line for all OD pairs does not exceed the bus capacity $C_b$.

\subsection{Discussion of Model Settings}\label{sec:model-setting}
Following \cite{goerigk2017line}'s notion, \emph{Line Planning with Route Assignment} optimizes centralized demand-to-service assignments (see also \cite{bertsimas2021data, schmidt2014integrating, borndorfer2007column, schobel2006line}), but can be criticized for routing passengers along undesirable paths. To address this, \cite{goerigk2017line} study \emph{Line Planning with Route Choice}, which adds constraints ensuring passengers only travel on shortest paths. Our formulation bridges these two paradigms through the reward parameters $v_{b,l,(i,j)}$. Setting $v_{b,l,(i,j)}=1$ recovers the Route Assignment model by solely maximizing ridership. Alternatively, calibrating these parameters to penalize detours and first- and last-mile access costs (see Section~\ref{sec:computational-exp}) aligns the model with user preferences, and sufficiently strict penalties effectively recover the Route Choice model. The same mechanism naturally extends to encoding other preference dimensions such as bus type or size, road quality, and multi-modal access configurations.

In addition, we employ a predefined line pool rather than generating lines dynamically, following a widely adopted modeling paradigm in the literature (e.g.,  \cite{hartleb2023modeling, friedrich2017integrating, schobel2012line}). This choice is further motivated by algorithmic considerations, as our goal is to derive a constant-factor approximation algorithm, and \cite{perivier2021real} proved that such approximation guarantees are unattainable when lines are generated dynamically.

Furthermore, our formulation does not account for transfers between bus lines. However, the general reward parameters $v_{b,l,(i,j)}$ are flexible enough to capture the costs of access and egress legs, including the distance from a passenger's origin to the boarding stop and from the alighting stop to their destination. Since passengers already incur implicit transfers during the first and last miles of their trip, imposing additional bus-to-bus transfers mid-journey would be excessive and further degrade the user experience. %From a theoretical perspective, \cite{perivier2021real} also proved that incorporating inter-line transfers in this setting renders a constant-factor approximation algorithm unattainable.}

% Furthermore, our model focuses exclusively on direct service, precluding transfers between lines.
% This design choice ensures computational tractability while addressing the primary drivers of passenger satisfaction.
% As \cite{suman2019improvement} demonstrate in their transfer-free formulation, prioritizing directness is a user-centric strategy that naturally favors long-distance commutes where public transit is most competitive.
% They identify the lack of direct services as a critical determinant in the modal shift toward private vehicles.
% From a methodological perspective, excluding transfer services is a well-established assumption in line planning to manage both theoretical and computational complexity \citep{borndorfer2007column, yan2019combined, suman2019improvement}.
% Crucially, \cite{perivier2021real} proved that incorporating transfers in this setting renders a constant-factor approximation algorithm unattainable.

\section{LPRC without Line Costs}\label{sec:no-line-cost}

In this section, we assume that $c_{b,l}^{(k)}=0$ for all $b\in [M]$, $l\in \mathcal{L}_b$ and $k\in [K]$. Thus we can omit resource constraints (\ref{eq:ILP-bedget}) in  (\ref{eq:ILP-original}) and (\ref{eq:LP-budgets}) in the LP relaxation (\ref{LP-relaxation}) in this section. {For this scenario, we develop Algorithm \ref{alg:simple-alg}, which achieves the optimal approximation ratio of $1 - \frac{1}{e}$.}

\noindent{\bf An overview of finding and analyzing a solution for (\ref{eq:ILP-original}):} We obtain a high-quality integer solution by applying randomized rounding (Algorithm \ref{alg:simple-alg}) to the LP relaxation (Section \ref{sec:relaxation}). The approximation ratio is established by bounding an auxiliary Algorithm \ref{alg:benchmark-nc} and showing that Algorithm \ref{alg:simple-alg} outperforms this benchmark, thereby inheriting its performance guarantees.

%Algorithm \ref{alg:simple-alg} is proved to achieve an approximation ratio of $\left(1-\frac{1}{e}\right)$. We also demonstrate that this approximation ratio is tight for our given line planning problems; specifically, no polynomial time approximation algorithm can surpass an approximation ratio of  $\left(1-\frac{1}{e}\right)$ unless $P=NP$. 

%For clarity and ease of reference, we describe the workflow of this section in Figure~\ref{fig:flow-chart}.%  and the major notation in Table \ref{tab:variables}.

\subsection{Linear Relaxation of (\ref{eq:ILP-original})}\label{sec:relaxation}

% In this section, we introduce a Linear Relaxation formulation for (\ref{eq:ILP-original}). Initially, we observe that the following heterogeneous-integer programming formulation serves as a relaxation of (\ref{eq:ILP-original}):

% \begin{subequations}\label{eq:MILP-relaxation}
% \begin{align}
%     \max_{\lambda,\xi}~~~~~~~~~~&\sum_{b\in [M]}\sum_{l\in \mathcal{L}_b}\sum_{(i,j)\in D}v_{b,l,(i,j)}\xi_{b,l,(i,j)}\\
%     s.t.~~~~~~~~~~~~& \sum_{l \in \mathcal{L}_b}\lambda_{b,l} = 1~~~~\forall b\in [M]\label{eq:MILP-one-line-each-bus}\\
%     &\sum_{b\in [M]}\sum_{l\in \mathcal{L}_b}\xi_{b,l,(i,j)}\leq d_{(i,j)}~~~~\forall (i,j)\in D\label{eq:MILP-respect-demand}\\
%     &\xi_{b,l,(i,j)}\leq \lambda_{b,l} ~~~\forall b\in [M], l\in \mathcal{L}_b,(i,j)\in D\label{eq:MILP-serve-by-open-line}\\
%     &\left(\xi_{b,l,(i,j)}\right)_{(i,j)\in D}\in \mathcal{P}(b,l)~~~\forall b\in [M], l\in \mathcal{L}_b\label{eq:MILP-capacity-constraints}\\
%     &\lambda_{b,l}\in \{0,1\}~~~\forall b\in [M], l\in \mathcal{L}_b\\
%     &\xi_{b,l,(i,j)}\in \R~~~\forall b\in [M], l\in \mathcal{L}_b,(i,j)\in D
% \end{align}
% \end{subequations}

{In this section, we present a linear relaxation of (\ref{eq:ILP-original}) inspired by \cite{fleischer2011tight}, which is crucial for developing the subsequent randomized rounding approximation algorithms.}

Assume that the set of integer points in the polytope $\mathcal{P}(b,l)$ defined in (\ref{eq:P(b,l)}) has cardinality $n_{b,l}$. Let ${\theta}^{(t)}_{b,l} := \left({\theta}^{(t)}_{b,l,(i,j)}\right)_{(i,j) \in D^{b,l}}$ denote an integer point in $\mathcal{P}(b,l)$ for $t \in [n_{b,l}]$, {representing a feasible allocation of OD demands to bus $b$ on line $l$ under the capacity constraints. Using these integer points, we formulate the following LP relaxation of (\ref{eq:ILP-original}):}
\begin{subequations}\label{LP-relaxation}
\begin{align}
    \max_{\lambda}~~~~~~~~~~&\sum_{b\in [M]}\sum_{l\in \mathcal{L}_b}\sum_{t \in [n_{b,l}]}\sum_{(i,j)\in D^{b,l}}v_{b,l,(i,j)}{\theta}_{b,l,(i,j)}^{(t)}\lambda_{b,l}^{(t)}\\
    s.t.~~~~~~~~~~~~& \sum_{l\in\mathcal{L}_b}\sum_{t \in [n_{b,l}]}\lambda_{b,l}^{(t)} = 1~~~~\forall b\in [M]\label{eq:LP-one-line-each-bus}\\
    &\sum_{b\in [M]}\sum_{l\in \mathcal{L}_b}\sum_{t\in [n_{b,l}]} c_{b,l}^{(k)}\cdot \lambda_{b,l}^{(t)}\leq 1~~~~\forall k\in [K]\label{eq:LP-budgets}\\
    &\sum_{b\in [M]}\sum_{l\in \mathcal{L}_b(i,j)}\sum_{t \in [n_{b,l}]}\lambda_{b,l}^{(t)}{\theta}_{b,l,(i,j)}^{(t)}\leq d_{(i,j)}~~~~\forall (i,j)\in D\label{eq:LP-respect-demand}\\
    &{ \lambda_{b,l}^{(t)}}\geq 0~~~~\forall b\in [M],~l\in \mathcal{L}_b,~t\in [n_{b,l}]\label{eq:LP-lambda}
\end{align}
\end{subequations}

\noindent {The reason why (\ref{LP-relaxation}) is a valid LP relaxation is as follows. If (\ref{eq:LP-lambda}) is replaced by $\lambda_{b,l}^{(t)} \in \{0,1\}$, the formulation becomes equivalent to (\ref{eq:ILP-original}). This is because the integrality constraint along with (\ref{eq:LP-one-line-each-bus}) ensure that each bus is assigned exactly one line with one feasible passenger allocation. Note that (\ref{eq:LP-one-line-each-bus}) is set as an equality due to the inclusion of a dummy line in each bus's line set. The resource limits are represented by (\ref{eq:LP-budgets}), while (\ref{eq:LP-respect-demand}) ensures that OD demands are not overserved. Finally, capacity constraints are implicitly respected through the feasible demand allocations \(\theta^{(t)}_{b,l}\). Thus, allowing \(\lambda_{b,l}^{(t)}\) to take continuous values extends the feasible region, thereby forming a valid LP relaxation of (\ref{eq:ILP-original}).}

Note that (\ref{LP-relaxation}) has an exponential number of variables but a polynomial number of constraints. In the next lemma, we will show that it can be solved in polynomial time.

\begin{lemma}\label{lem:LP-relaxation-solvable}
    The LP relaxation \textup{(\ref{LP-relaxation})} can be solved exactly in polynomial time by the ellipsoid method.
\end{lemma}

\begin{proof}[Proof Sketch]
    We apply the ellipsoid method to the dual of \textup{(\ref{LP-relaxation})} together with a separation oracle that, for any dual variable vector $w$, solves for each pair $(b,l)$ the subproblem
    \[
      \max_{\theta_{b,l} \in \mathcal{P}(b,l)} 
      \sum_{(i,j)\in D^{b,l}} \bigl(v_{b,l,(i,j)} - w_{(i,j)}\bigr)\,\theta_{b,l,(i,j)}.
    \]
    As shown in Appendix~\ref{app-sec:lem:LP-solvable}, this is a polynomial-size LP whose constraint matrix is totally unimodular. Hence we obtain an exact polynomial-time separation oracle. Using the ellipsoid method, the primal LP relaxation~\eqref{LP-relaxation} can be solved exactly in time polynomial in the input size (see \cite[Section~4.3]{williamson2011design}).
\end{proof}

\begin{assumption}\label{assumption:hat-n}
Let $\left\{\widehat{\lambda}_{b,l}^{(t)} : b \in [M], l \in \mathcal{L}_b, t \in [n_{b,l}]\right\}$ be an optimal solution to the LP relaxation~\eqref{LP-relaxation} obtained using the ellipsoid method and the separation oracle described above. For each pair $(b, l)$, let $\widehat{n}_{b,l}$ be the number of points $\theta^{(t)}_{b,l}$ such that $\widehat{\lambda}_{b,l}^{(t)} > 0$; without loss of generality we assume that such points are indexed first, that is, $\widehat{\lambda}_{b,l}^{(t)} > 0$ iff $t \in [\widehat{n}_{b,l}]$. By Lemma~\ref{lem:LP-relaxation-solvable}, the indices $[\widehat{n}_{b,l}]$ have polynomial size. This notation will be adopted throughout the paper.
\end{assumption}

%\vspace{-0.5em}
\subsection{A Simple Randomized Rounding Algorithm}

Given the optimal solution to (\ref{LP-relaxation}), we propose a randomized rounding algorithm to solve (\ref{eq:ILP-original}). The algorithm first samples a line-configuration pair for each bus according to the LP solution probabilities (Step~\ref{simple-alg-step:sample}), then resolves conflicts where independent sampling over-serves an OD pair by greedily assigning demand in decreasing order of reward value (Step~\ref{simple-alg-step:assign}).

\begin{namedalg}{NC}\label{alg:simple-alg}

~

\noindent {\bf Input:}
        LP relaxation (\ref{LP-relaxation}) and an optimal LP solution $\{\widehat{\lambda}_{b,l}^{(t)}\}$ as in Assumption~\ref{assumption:hat-n}. Let \( \widehat{n}_{b,l} \) be defined as in Assumption \ref{assumption:hat-n}.

\noindent {\bf Output: }A (randomized) solution to (\ref{eq:ILP-original}).

\noindent {\bf Steps: }
\begin{enumerate}
    \item(Sampling) For each $b\in [M]$, a tuple $(l,t)$ (line-configuration pair) is randomly selected (sampled) based on the selection probabilities given by $\widehat{\lambda}_{b,l}^{(t)}$. Denote the selected tuple for each $b\in [M]$ as $(l_b,t_b)$.\label{simple-alg-step:sample}
    \item(Conflict Resolution) Given the sampling realization, for each OD pair $(i,j)\in D$, assume that $v_{1,l_1,(i,j)}\geq v_{2,l_2,(i,j)}\geq \ldots\geq v_{M,l_M,(i,j)}$. Proceed by sequentially assigning $\theta_{b,l_b,(i,j)}^{(t_b)}$ to $b$ operating on line $l_b$, from $b = 1$ to $b = M$. If the total assigned demand for the OD pair $(i,j)$ exceeds $d_{(i,j)}$, discard the excess part that was allocated last, ensuring that the total assigned demand does not exceed $d_{(i,j)}$. \label{simple-alg-step:assign}
\end{enumerate} 

\end{namedalg}

% \noindent The following observation is straightforward.

% \begin{lemma}
% The demand assignments produced by Algorithm \ref{alg:simple-alg} are integral.
% \end{lemma}

 We will prove the following theorem by demonstrating the superiority of Algorithm \ref{alg:simple-alg} over auxiliary Algorithm \ref{alg:benchmark-nc} that will be described later. 

\begin{theorem}\label{thm:main}
   The demand assignments produced by Algorithm \ref{alg:simple-alg} are integral. Let $\Gamma$ be the optimal objective value of LP relaxation (\ref{LP-relaxation}). The expected reward obtained by Algorithm \ref{alg:simple-alg} is at least  
$$\left(1-\frac{1}{e}\right)\cdot \sum_{b\in [M]}\sum_{l\in \mathcal{L}_b}\sum_{t \in [n_{b,l}]}\sum_{(i,j)\in D^{b,l}}v_{b,l,(i,j)}{\theta}_{b,l,(i,j)}^{(t)}\widehat{\lambda}_{b,l}^{(t)}~=~\left(1-\frac{1}{e}\right)\Gamma.$$
Moreover, the approximation factor $\bigl(1-\tfrac{1}{e}\bigr)$ is tight for LPRC instances without line costs.
\end{theorem}

\begin{proof}
    The approximation ratio is obtained by combining Theorem \ref{thm:benchmark-nc} and Theorem \ref{thm:nc-vs-benchmark} below. The tightness result follows from Theorem~\ref{thm:hardness} in Appendix~\ref{app-sec:thm:hardness}.
\end{proof}

Next we define the auxiliary Algorithm \ref{alg:benchmark-nc}. It is important to note that Algorithm \ref{alg:benchmark-nc} is non-polynomial because it iterates through passengers individually, whereas the standard Algorithm \ref{alg:simple-alg} operates on aggregate passenger counts. However, the former is not intended for implementation, but rather serves as a theoretical construct to prove the performance bound in Theorem \ref{thm:main}.

At the beginning of Algorithm \ref{alg:benchmark-nc}, buses are ranked for each OD pair $(i,j)$ based on their pre-sampling expected rewards $R_{b,(i,j)}$. The algorithm then independently samples a line-configuration pair for each bus using the LP solution probabilities (Step~\ref{bench-step:sampling}), and resolves conflicts by having each bus offer seats to a random subset of passengers of size $\theta_{b,l_b,(i,j)}^{(t_b)}$, with each passenger accepting the highest-ranked bus among those that offered service (Step~\ref{bench-step:conflict}).

\begin{namedalg}{Benchmark NC}\label{alg:benchmark-nc}

~

\noindent \textbf{Input:} An optimal LP solution $\left\{\lambda_{b,\ell}^{(t)}\right\}_{b\in[M],\,\ell,t}$ to \eqref{LP-relaxation}.

\noindent \textbf{Output:} A randomized assignment feasible for \eqref{eq:ILP-original}.

\noindent \textbf{Pre‑computation.} For every OD pair $(i,j)$ and bus $b$ define
\[
  \Lambda_{b,(i,j)}:=\sum_{\ell,t}\lambda_{b,\ell}^{(t)}\,\theta_{b,\ell,(i,j)}^{(t)},
  \qquad
  R_{b,(i,j)}:=\begin{cases}
     \dfrac{\sum_{\ell,t}\lambda_{b,\ell}^{(t)}\,\theta_{b,\ell,(i,j)}^{(t)}\,v_{b,\ell,(i,j)}}{\Lambda_{b,(i,j)}} & \Lambda_{b,(i,j)}>0,\\[4pt]
     0 & \Lambda_{b,(i,j)}=0.
  \end{cases}
\]
Order the buses for each $(i,j)$ in non‑increasing $R_{b,(i,j)}$.

\smallskip

\noindent \textbf{Steps:}
\begin{enumerate}
  \item(Sampling) For each $b\in[M]$, sample a tuple $(\ell_b,t_b)$ independently according to $\lambda_{b,\ell}^{(t)}$.\label{bench-step:sampling}
  \item(Conflict Resolution) For every OD pair $(i,j)$ process its $d_{(i,j)}$ passengers sequentially:\label{bench-step:conflict}
  \begin{enumerate}
    \item For each bus \(b\), it selects uniformly at random exactly \(\theta_{b,\ell_b,(i,j)}^{(t_b)}\) passengers from the \(d_{(i,j)}\) travellers of OD pair \((i,j)\); for each passenger \(p\) of OD pair \((i,j)\), set 
\[
  E_{b,p} = 
  \begin{cases}
    1, & \text{if \(p\) is among the selected set},\\
    0, & \text{otherwise}.
  \end{cases}
\]
    \item Let $B_p:=\{b:E_{b,p}=1\}$, the set of buses that offered service to passenger \(p\) of OD pair \((i,j)\). If $B_p=\varnothing$, leave $p$ unserved.
    Otherwise assign $p$ to the bus in $B_p$ with the highest $R_{b,(i,j)}$. 
  \end{enumerate}
\end{enumerate}

\end{namedalg}

 The next Lemma derives the expected reward generated from each passenger.

\begin{lemma}\label{lem:single-passenger-reward}
    For any passenger $p$ belonging to OD pair $(i,j)$,
\[
  \mathbb{E}[Z_p]
  \;=\;\sum_{b=1}^{M} R_{b,(i,j)}\;x_{b,(i,j)}\prod_{a<b}\bigl(1-x_{a,(i,j)}\bigr),
\]
where $Z_p$ is the reward obtained for~$p$ under Benchmark NC and
$x_{b,(i,j)}:= \Lambda_{b,(i,j)}/d_{(i,j)}\in[0,1]$.
\end{lemma}

\begin{proof}

For any bus \( b \) and OD pair \( (i,j) \), the tuple \( (\ell_b, t_b) \) is sampled with probability \( \lambda_{b,\ell}^{(t)} \). Conditional on this selection, the bus serves exactly \( \theta_{b,\ell_b,(i,j)}^{(t_b)} \) passengers for OD pair \( (i,j) \), chosen uniformly at random from the \( d_{(i,j)} \) total passengers of that pair. Therefore, the conditional probability that a fixed passenger \( p \) from OD pair \( (i,j) \) is selected by bus \( b \) is given by
\[
\binom{d_{(i,j)} - 1}{\theta_{b,\ell,(i,j)}^{(t)} - 1}
\bigg/
\binom{d_{(i,j)}}{\theta_{b,\ell,(i,j)}^{(t)}}
= \frac{\theta_{b,\ell,(i,j)}^{(t)}}{d_{(i,j)}}.
\]
Hence, the probability that bus \( b \) selects passenger \( p \) is
\begin{align}\label{eq:Ebp}
    \Pr(E_{b,p} = 1)
    = \sum_{\ell,t} \lambda_{b,\ell}^{(t)} \cdot \frac{\theta_{b,\ell,(i,j)}^{(t)}}{d_{(i,j)}}
    = \frac{\Lambda_{b,(i,j)}}{d_{(i,j)}}
    = x_{b,(i,j)}.
\end{align}

\noindent Since the buses sample their line–configuration pairs independently, the selection events \( \{E_{b,p}\}_{b=1}^M \) are mutually independent.

With buses ordered so that
$R_{1,(i,j)}\ge R_{2,(i,j)}\ge\cdots\ge R_{M,(i,j)}$,
define
\(
  C_{b,p}:=E_{b,p}\prod_{a<b}(1-E_{a,p}),
\)
i.e.\ $C_{b,p}=1$ iff bus $b$ selects $p$ at Step 2(a) and no higher-ranked bus does.

Let $Z_p$ be the reward earned from passenger~$p$, i.e.
\(
  Z_p=\sum_{b=1}^M C_{b,p}\,v_{b,\ell_b,(i,j)}.
\)
By linearity of expectation,
\begin{align}\label{eq:EZp}
      \mathbb{E}[Z_p]=\sum_{b=1}^M
     \mathbb{E}\bigl[C_{b,p}\,v_{b,\ell_b,(i,j)}\bigr]=\sum_{b=1}^M
      \Pr(C_{b,p}=1)\;
      \mathbb{E}\bigl[v_{b,\ell_b,(i,j)}\mid C_{b,p}=1\bigr].
\end{align}

\noindent Using independence of the $E_{b,p}$’s,
\begin{align}\label{eq:Cbp}
      \Pr(C_{b,p}=1)
  =\Pr(E_{b,p}=1)\prod_{a<b}\Pr(E_{a,p}=0)
  =x_{b,(i,j)}\prod_{a<b}\bigl(1-x_{a,(i,j)}\bigr).
\end{align}

\noindent Moreover, the event $C_{b,p}=1$ implies $E_{b,p}=1$, and  
the choice $(\ell_b,t_b)$ is independent of
other buses’ selections; therefore
\(
  \mathbb{E}\bigl[v_{b,\ell_b,(i,j)}\mid C_{b,p}=1\bigr]
  =\mathbb{E}\bigl[v_{b,\ell_b,(i,j)}\mid E_{b,p}=1\bigr].
\)

\noindent By \eqref{eq:Ebp}, we have that
\[
  \Pr\bigl((\ell_b,t_b)=(\ell,t)\mid E_{b,p}=1\bigr)
  \;=\;
  \frac{\lambda_{b,\ell}^{(t)}\,\theta_{b,\ell,(i,j)}^{(t)}/d_{(i,j)}}
       {\Lambda_{b,(i,j)}/d_{(i,j)}}
  \;=\;
  \frac{\lambda_{b,\ell}^{(t)}\,\theta_{b,\ell,(i,j)}^{(t)}}
       {\Lambda_{b,(i,j)}}.
\]
Therefore
\begin{align}\label{eq:cond-v-Ebp}
      \mathbb{E}[\,v_{b,\ell_b,(i,j)}\mid E_{b,p}=1]
  \;=\;
  \sum_{\ell,t}
     v_{b,\ell,(i,j)}
     \frac{\lambda_{b,\ell}^{(t)}\,\theta_{b,\ell,(i,j)}^{(t)}}
          {\Lambda_{b,(i,j)}}
  \;=\;R_{b,(i,j)}.
\end{align}

Substituting \eqref{eq:Cbp} and \eqref{eq:cond-v-Ebp} into \eqref{eq:EZp}, we finish the proof.
\end{proof}

We are now in a position to establish the performance bound of Algorithm~\ref{alg:benchmark-nc}.

\begin{theorem}\label{thm:benchmark-nc}
Let\/ $\Gamma$ denote the optimal objective value of the LP relaxation
\eqref{LP-relaxation}.  
The expected total reward produced by
Algorithm~\ref{alg:benchmark-nc} satisfies
\[
     \mathbb{E}\bigl[\textup{reward of Algorithm~\ref{alg:benchmark-nc}}\bigr]
     \;\;\ge\;\;
     \Bigl(1-\frac1e\Bigr)\,\Gamma .
\]
\end{theorem}

\begin{proof}

Fix an OD pair $(i,j)\in D$ and a passenger
$p$ of $(i,j)$.  
With the notation of Algorithm \ref{alg:benchmark-nc}, 
 Lemma \ref{lem:single-passenger-reward} proves that
\(
   \E[Z_p]\;=\;
   \sum_{b=1}^{M}
     R_{b,(i,j)}\,x_{b,(i,j)}
     \prod_{a<b}\bigl(1-x_{a,(i,j)}\bigr),
\)
where the buses are ordered so that
$R_{1,(i,j)}\ge R_{2,(i,j)}\ge\cdots\ge R_{M,(i,j)}$ and
$x_{b,(i,j)}\in[0,1]$ with $\sum_{b}x_{b,(i,j)}\le1$. Then due to
\cite[Lemma 2.1 and Proof of Lemma 2.2]{fleischer2011tight}, we have that
\begin{equation}\label{eq:passenger}
   \E[Z_p]\;=\;
   \sum_{b=1}^{M}
      R_{b,(i,j)}\,x_{b,(i,j)}\prod_{a<b}(1-x_{a,(i,j)})
   \;\;\ge\;\;
   \Bigl(1-\frac1e + \frac{1}{32M^2}\Bigr)\sum_{b=1}^{M}R_{b,(i,j)}\,x_{b,(i,j)}.
\end{equation}

The total reward is
$\sum_{(i,j)\in D}\sum_{p \text{ from } (i,j)} Z_p$; taking expectations and
applying~\eqref{eq:passenger} yields
\[
\begin{aligned}
  \E[\text{total reward}]
  &= \sum_{(i,j)\in D}\sum_{p \text{ from } (i,j)} \E[Z_p]
  \ge \Bigl(1-\tfrac1e+\frac{1}{32M^2}\Bigr)
       \sum_{(i,j)\in D}\sum_{p \text{ from } (i,j)}\sum_{b=1}^{M}
         R_{b,(i,j)}\,x_{b,(i,j)} \\[-2pt]
  &= \Bigl(1-\tfrac1e+\frac{1}{32M^2}\Bigr)
       \sum_{(i,j)\in D}\sum_{b=1}^{M}
         R_{b,(i,j)}\,x_{b,(i,j)}\,d_{(i,j)}
    = \Bigl(1-\tfrac1e+\frac{1}{32M^2}\Bigr)
       \sum_{(i,j)\in D}\sum_{b=1}^{M}
       R_{b,(i,j)}
       \Lambda_{b,(i,j)}\\[2pt]
  &= \Bigl(1-\tfrac1e+\frac{1}{32M^2}\Bigr)
       \sum_{b=1}^{M}\sum_{\ell,t}\sum_{(i,j)\in D}
       \lambda_{b,\ell}^{(t)}\,
       \theta_{b,\ell,(i,j)}^{(t)}\,
       v_{b,\ell,(i,j)},
\end{aligned}
\]
The term in the last equality equals $\bigl(1-\frac{1}{e}+\frac{1}{32M^2}\bigr)$ times the objective value of the given LP solution. Then, by direct substitution and calculation, we complete the proof.
\end{proof}

 Algorithm \ref{alg:simple-alg} assigns passengers to buses based on the post-sampling (realized) reward values (and therefore the allocation is the best possible under the realization), whereas Algorithm \ref{alg:benchmark-nc} assigns passengers to buses based on pre-sampling (expected) reward values. Hence, for any sampling realization, the allocation in Algorithm NC cannot be worse than the allocation in Algorithm \ref{alg:benchmark-nc}. Consequently, Algorithm \ref{alg:simple-alg} inherits the performance guarantee established for Algorithm \ref{alg:benchmark-nc}.

\begin{theorem}
\label{thm:nc-vs-benchmark}
Fix any realization of the sampling step  
$\bigl\{(l_b,t_b)\bigr\}_{b\in[M]}$  
that is common to Algorithm~\ref{alg:simple-alg} and
Algorithm~\ref{alg:benchmark-nc}.
Conditional on this sampling outcome,
the total reward produced by Algorithm~\ref{alg:simple-alg} is \emph{at least} the total reward
produced by Algorithm~\ref{alg:benchmark-nc} regardless of its additional internal randomness.  
Consequently,
\(
   \E \bigl[\text{reward of NC}\bigr]
   \;\;\ge\;\;
   \E
   \bigl[\text{reward of Algorithm~\ref{alg:benchmark-nc}}\bigr].
\)
\end{theorem}

\begin{proof}
Assume the two algorithms share the same sampled tuples
$(l_b,t_b)$, then
\(
   \theta_{b}:=\theta_{b,l_b}^{(t_b)}
\)
are identical for both algorithms.
We compare the two assignments OD pair by OD pair.

Fix \((i,j)\in D\) and abbreviate 
\(
   v_{b}:=v_{b,l_b,(i,j)}, 
   \text{ and }
   \theta_{b}:=\theta_{b,l_b,(i,j)}^{(t_b)}.
\)
Let \(N^\star_{b}\) be the number of passengers assigned to bus \(b\) by Algorithm \ref{alg:simple-alg}.  The algorithm orders the buses so that
\(
   v_{1}\ge v_{2}\ge\cdots\ge v_{M},
\)
and then greedily sets
\(
   N^\star_{b} \;:=\;\min\Bigl\{\theta_{b},\;d_{(i,j)}-\sum_{a<b}N_{a}\Bigr\},
   \text{ for each } b=1,\dots,M.
\)
This exactly solves the linear knapsack problem
\[
   \max\;\sum_{b=1}^{M} v_{b}\,N_{b}
   \quad\text{s.t.}\quad
   0\le N_{b}\le\theta_{b}\quad\forall b,\quad
   \sum_{b=1}^{M}N_{b}\le d_{(i,j)},
\]
and thus constitutes an \emph{optimal} solution to the knapsack whose objective matches the total reward for that OD pair under the sampled lines.

By contrast, Algorithm~\ref{alg:benchmark-nc} first selects \(\theta_{b}\) passengers for each bus \(b\), then assigns each chosen passenger to one bus.  Let \(\widetilde N_{b}\) be the number of passengers ultimately boarded on \(b\) by Algorithms~\ref{alg:benchmark-nc}.  Clearly
\(
   0\;\le\;\widetilde N_{b}\;\le\;\theta_{b}\),
   and
   \(\sum_{b=1}^{M}\widetilde N_{b}\;\le\;d_{(i,j)},
\)
but since Algorithm \ref{alg:simple-alg} attains the knapsack optimum,
\(
   \sum_{b=1}^{M}v_{b}\,\widetilde N_{b}
   \;\le\;
   \sum_{b=1}^{M}v_{b}\,N^\star_{b}.
\)

Since this holds for every sampling realization and each OD pair, summing over all \((i,j)\) and then taking expectations over both the common sampling step and Algorithm \ref{alg:benchmark-nc}'s internal randomness shows that Algorithm \ref{alg:simple-alg} achieves at least as high an expected reward as Algorithm \ref{alg:benchmark-nc}.
\end{proof}

\section{Extensions to Resource-Constrained Settings}\label{sec:extensions}

While Algorithm NC provides optimal approximation guarantees for the resource-unconstrained case, the general LPRC problem includes multiple resource constraints. We show how Algorithm NC extends to handle resource constraints using decomposition techniques in \cite{lee2009non}. To maintain the flow of the main text, we present a high-level overview of the results here. The detailed algorithms and their corresponding proofs are provided in Appendices \ref{sec:approx-LPRC} and \ref{sec:tolerance}.

\subsection{General-Cost Extension}

When line costs can vary significantly across different resource types, the challenge is to handle this heterogeneity while maintaining approximation guarantees. Some lines may have small costs across all resources while others may be expensive in at least one resource type.
We develop a polynomial-time approximation algorithm that addresses this challenge through a heavy/light decomposition approach.

\begin{corollary}[General Case]\label{cor:general}
For arbitrary costs, Algorithm \ref{alg:approx-all-costs} (see Appendix \ref{sec:approx-LPRC}) achieves the expected approximation ratio $(\frac{1}{2}-\frac{1}{2e}-\eta)$ for any given constant $\eta\in (0,1/4)$.
\end{corollary}

\textbf{Approach:} For a specially chosen constant $\delta$, the algorithm partitions the set of lines into low-cost (all costs $\le \delta$) and high-cost (some costs $>\delta$) subsets, then enumerates all feasible assignments for high-cost lines (polynomially many due to resource constraints). It compares the LP-relaxation value for the problem limited to only the low-cost lines against the best LP value among enumerated high-cost assignments, applying the appropriate rounding to the better case. The key insight is that any feasible solution can be decomposed into low-cost and high-cost parts, so the maximum of our two approaches achieves at least half the optimal value. For the low-cost case, we apply Algorithm LC (detailed in Appendix \ref{sec:low-cost}), which uses Algorithm NC with modified sampling probabilities. Small costs ensure this maintains near-optimal performance through concentration inequalities. In the next section we show that allowing a small resource augmentation eliminates the $\frac{1}{2}$ factor and yields a near-optimal guarantee.

\subsection{Resource-Tolerant Extension}

Many operational settings can tolerate a small degree of excess resource consumption, such as a minor increase in spending beyond the allocated budget. This flexibility can significantly improve approximation ratios. 
We develop a polynomial-time algorithm that achieves near-optimal approximation ratios under this augmented resource model.

\begin{corollary}[Resource-Tolerant Case]\label{cor:tolerant}
Algorithm C-Tol (see Appendix \ref{sec:tolerance}) achieves expected approximation ratio $(1-\frac{1}{e}-\eta)$ while using at most $(1+\tau)$ units of each resource, for any given constants \(\eta, \tau \in \left(0, \frac{1}{2}\right)\).
\end{corollary}

\textbf{Approach:} The algorithm enumerates high-cost assignments as in the general case, but for each enumerated assignment, it solves a modified LP where high-cost assignments are fixed, remaining resources are augmented by $\tau$, and low-cost line costs are scaled appropriately. It then applies Algorithm LC (Appendix \ref{sec:low-cost}) to the best modified LP solution. The key insight is that resource augmentation ensures the modified LP upper bounds the original optimal value, while careful scaling maintains the small-cost conditions needed for Algorithm LC. This enables near-optimal rounding, with the $(1+\tau)$ resource usage following directly from the augmentation structure.

\begin{remark}
   The approximation ratio in Corollary \ref{cor:tolerant} is nearly tight by Theorem \ref{thm:hardness}. 
\end{remark}

\section{A Practical Algorithm}\label{sec:practical}

To illustrate that this framework can combine solid theoretical guarantees with good practical performance, this section develops an algorithm tailored for real-world applications. Building on the core randomized rounding strategy of Algorithm \ref{alg:simple-alg}, we introduce several crucial modifications to improve empirical performance. These enhancements are validated through numerical simulations in Section \ref{sec:computation-advance} and Appendix \ref{sec:app-synthetic}, where we demonstrate that the proposed algorithm is computationally effective, particularly on large-scale instances where contemporary solvers like Gurobi fail due to high memory and computational requirements.

First, to reduce the problem size for large fleets, the algorithm leverages the observation that buses can often be classified into homogeneous groups based on shared characteristics (e.g., capacity and fuel type). This allows for an aggregation of variables in the LP relaxation, significantly reducing its size without loss of optimality.
Second, to manage resource constraints (which are absent in the setting for Algorithm \ref{alg:simple-alg}), we introduce a scaling parameter~$\epsilon \in (0,1)$ that reduces the selection probabilities during randomization. Rather than using a fixed value, our approach includes a novel method for tuning~$\epsilon$ based on instance-specific characteristics to maximize the theoretical performance guarantee.
Third, if a randomized solution violates resource constraints, a repair heuristic is applied to restore feasibility. This procedure intelligently removes bus assignments to preserve the valuable parts of a near-feasible solution rather than dismissing it entirely.
Finally, after a feasible solution is found, a post-processing phase is introduced to utilize any remaining bus capacity and serve unassigned passengers.

The first modification reduces the problem size by aggregating buses with identical characteristics. Proposition \ref{prop:aggregate} establishes the validity of this approach.

%\vspace{-0.5em}

\begin{prop}\label{prop:aggregate}
Let the set of all buses $[M]$  be partitioned into $W$ disjoint subsets $\mathcal{M}_w \subseteq [M], w \in [W]$. All the buses in the same subset are homogeneous. In particular, if buses $b$ and $b'$ belong to the same subset $\mathcal{M}_w$, then they share the same candidate line sets $(\mathcal{L}_w:=\mathcal{L}_b =\mathcal{L}_{b'})$ and have identical parameters. Hence, for buses from the same $\mathcal{M}_w$ we use index $w$ instead of $b$:  $D^{w,l} = D^{b,l}$, $v_{w,l,(i,j)} = v_{b,l,(i,j)}$, $c_{w,l}^{(k)}=c_{b,l}^{(k)}$, $n_{w,l} = n_{b,l}$ and $\theta_{w,l,(i,j)}^{(t)}=\theta_{b,l,(i,j)}^{(t)}$ for all $b\in \mathcal{M}_w$. Then, consider the following LP problem:
\begin{subequations}\label{LP-relaxation-aggregated}
\begin{align}
\max_{\lambda\geq 0}~~~~~~~~~~&\sum_{w\in [W]}\sum_{l\in \mathcal{L}_w}\sum_{t \in [n_{w,l}]}\sum_{(i,j)\in D^{w,l}}|\mathcal{M}_w|\cdot v_{w,l,(i,j)}{\theta}_{w,l,(i,j)}^{(t)}\lambda_{w,l}^{(t)}\\
s.t.~~~~~~~~~~~~& \sum_{l\in\mathcal{L}_w}\sum_{t \in [n_{w,l}]}\lambda_{w,l}^{(t)} = 1~~~~\forall w\in [W]\\
&\sum_{w\in [W]}\sum_{l\in \mathcal{L}_w}\sum_{t\in [n_{w,l}]} |\mathcal{M}_w|\cdot c_{w,l}^{(k)}\cdot \lambda_{w,l}^{(t)}\leq 1~~~~\forall k\in [K]\\
&\sum_{w\in [W]}\sum_{l\in \mathcal{L}_w}\sum_{t \in [n_{w,l}]}|\mathcal{M}_w|\cdot \lambda_{w,l}^{(t)}{\theta}_{w,l,(i,j)}^{(t)}\leq d_{(i,j)}~~~~\forall (i,j)\in D
% &{ \lambda_{w,l}^{(t)}}\geq 0.
\end{align}
\end{subequations}  
%\vspace{-0.5em}
\noindent Let $\left\{\widehat{\lambda}_{w,l}^{(t)}: w\in [W], l\in \mathcal{L}_w, t\in [n_{w,l}]\right\}$ be an optimal solution to (\ref{LP-relaxation-aggregated}). Then we can derive an optimal solution to (\ref{LP-relaxation}) by letting $\lambda_{b,l}^{(t)} :=\widehat{\lambda}_{w,l}^{(t)}$ for $b\in \mathcal{M}_w, w\in [W], l\in \mathcal{L}_w, t\in [n_{w,l}]$. 

\end{prop}

\begin{proof}
It is straightforward to show through direct calculations that the derived solution to (\ref{LP-relaxation}) is feasible and has the same objective value as an optimal solution to (\ref{LP-relaxation-aggregated}). Regarding optimality, let  $\left\{\widetilde{\lambda}_{b,l}^{(t)}: b\in [M], l\in \mathcal{L}_b, t\in [n_{b,l}]\right\}$ be an optimal solution to (\ref{LP-relaxation}).  Construct a new solution  $\left\{\widehat{\lambda}_{w,l}^{(t)}: w\in [W], l\in \mathcal{L}_w, t\in [n_{w,l}]\right\}$  to (\ref{LP-relaxation-aggregated}) as 
$\widehat{\lambda}_{w,l}^{(t)} = \frac{1}{|\mathcal{M}_w|} \sum_{b\in \mathcal{M}_w} \widetilde{\lambda}_{b,l}^{(t)}$. 
Due to the homogeneity of buses within groups and the linearity of the objective and constraints, this new solution remains feasible and preserves the objective value. Therefore, the optimal objective values of (\ref{LP-relaxation}) and  (\ref{LP-relaxation-aggregated})  are equal.
\end{proof}

\begin{remark}
In practice $W$ is typically small with respect to $M$ as there are not many different types of buses, hence (\ref{LP-relaxation-aggregated}) is typically much smaller than (\ref{LP-relaxation}). This simplification helps practical application of this approach since it is based on solving LP relaxation (\ref{LP-relaxation}). We note that  the integrality and multi-dimensional capacity constraints prevent a similar simplification for the ILP formulation (\ref{eq:ILP-original}).
\end{remark}
%\vspace{-0.5em}
Now we describe the proposed algorithm for LPRC. Algorithm~\ref{alg:low-cost-modified} builds upon Algorithm~\ref{alg:simple-alg} but introduces a scaling parameter $\epsilon \in (0,1)$ to handle resource constraints (Step~\ref{alg-step:LC-M-sample}), resolves demand conflicts as before (Step~\ref{alg-step:LC-M-rounding}), repairs any resource constraint violations via a heuristic (Step~\ref{alg-step:LC-M-remove}), and assigns unserved passengers to residual bus capacity in a post-processing phase (Step~\ref{alg-step:LC-M-residual}). Steps~\ref{alg-step:LC-M-remove} and~\ref{alg-step:LC-M-residual} are intended to enhance practical performance and do not alter the theoretical analysis.

\begin{namedalg}{PR}\label{alg:low-cost-modified}

~

\noindent {\bf Input:}
LP relaxation (\ref{LP-relaxation-aggregated}), an optimal LP solution $\left\{\widehat{\lambda}_{w,l}^{(t)}:w\in [W], l\in \mathcal{L}_w, t\in [\widehat{n}_{w,l}]\right\}$, and $\{\mathcal{M}_w\}_{w\in [W]}$.

\noindent {\bf Output: }A (randomized) solution to (\ref{eq:ILP-original}).

\noindent {\bf Steps: }
\begin{enumerate}
\item
Set $\epsilon \in \left(0,1\right)$. (Strategy for choosing $\epsilon$ will be discussed later.) For each $b\in [M]$, if $b\in \mathcal{M}_w$, sample a tuple $(l,t)$ with the selection probability given by $(1-\epsilon)\widehat{\lambda}_{w,l}^{(t)}$. Denote the sampled tuple for each $b\in [M]$ as $(b,l_b,t_b)$. Note that the total of the selection probabilities for bus $b$ is less than 1. If no tuple is selected, the bus is not used (assigned to a ``dummy" line with zero costs and rewards).\label{alg-step:LC-M-sample}

\item Same as Step \ref{simple-alg-step:assign} in Algorithm \ref{alg:simple-alg}. \label{alg-step:LC-M-rounding}

\item If the obtained solution meets the resource constraints, it becomes the temporary solution. Otherwise, some heuristic is applied to unselect some of the selected buses until all resource constraints are met (we describe Heuristic H1 that we used for this purpose in Appendix \ref{sec:heuristics}), and the obtained solution becomes the temporary solution. \label{alg-step:LC-M-remove}

\item 
Compute the residual OD demands $\tilde d_{(i,j)}$ and the residual arc capacities
$r_{b,l_b}(e)$ for every selected bus--line pair $(b,l_b)$ after
Step~\ref{alg-step:LC-M-remove}.
Assign unserved passengers to these
already-selected buses using Heuristic~H2 defined in Appendix \ref{sec:heuristics}.
This postprocessing never increases any resource usage and preserves feasibility.
\label{alg-step:LC-M-residual}
\end{enumerate}

\end{namedalg}

\noindent Notably, we have the following theoretical guarantee, which holds regardless of the details of the heuristics used in Steps~\ref{alg-step:LC-M-remove} and~\ref{alg-step:LC-M-residual}:

\begin{prop}\label{prop:alg-R-approx}
    Let   $J$ be the largest  integer satisfying: 
$
\max_{b\in [M],l\in \mathcal{L}_b,k\in [K]}c_{b,l}^{(k)} \leq  \frac{1}{J}
$, and $\delta := \frac{\epsilon}{1-\epsilon}$.
Let $\delta_i := \delta +\frac{i-J}{J\cdot (1-\epsilon)}$, and $\widehat{J}$ be the smallest integer greater than or equal to $J$ such that $\delta_{\widehat{J}} \geq 1$. Then Algorithm \ref{alg:low-cost-modified} achieves an expected approximation ratio of at least: 
%\vspace{-0.5em}
\begin{align*}
    \alpha:=\left(1-\frac{1}{e}\right)\cdot \left(1-\epsilon\right)\cdot \Bigg(1-K\cdot \left(\frac{e^{\delta}}{(1 + \delta)^{1 + \delta}}\right)^{J \cdot (1 - \epsilon)} 
 - \frac{K}{J}\cdot \Bigg(&\sum_{i= J}^{\widehat{J}-1} \left(\frac{e^{\delta_i}}{(1 + \delta_i)^{1 + \delta_i}}\right)^{J \cdot (1 - \epsilon)}+4e^{-(\epsilon\cdot {J} + \widehat{J} - J) /3}
\Bigg)\Bigg).
\end{align*}
%\vspace{-0.5em}
\noindent In particular, let $K = 3$.  Table \ref{table:transposed_epsilon_alpha_J} provides values $\alpha$ for some values of $J$ and $\epsilon$. 
\begin{table}[H]
\centering
\begin{tabular}{|c|c|c|c|c|c|c|c|c|}
\hline
\(J\) & 10 & 15 & 20 & 30 & 40 & 45 & 60 & 80 \\ \hline
\(\epsilon\) & 0.62 & 0.54 & 0.49 & 0.42 & 0.38 & 0.37 & 0.33 & 0.29 \\ \hline
\(\alpha\) & 0.181 & 0.250 & 0.291 & 0.338 & 0.367 & 0.378 & 0.404 & 0.428 \\ \hline
\end{tabular}
\caption{Values \(\alpha\) for several \(J\).}
\label{table:transposed_epsilon_alpha_J}
\end{table}

%\vspace{-1.5em}

% \begin{remark}
% Further improvements to the approximation ratio guarantee are possible for larger $J$ values by allowing $\epsilon \in (0,\frac{1}{2})$. However, this significantly complicates result representation, increases analytical complexity, , and offers minimal benefit for $J \leq 20$. As this algorithm prioritizes practical applications, we leave this theoretical refinement as an interesting open question. 
% \end{remark}

\end{prop}

%\vspace{-1.5em}

% \begin{remark}
% Further improvements to the approximation ratio guarantee are possible for larger $J$ values by allowing $\epsilon \in (0,\frac{1}{2})$. However, this significantly complicates result representation, increases analytical complexity, , and offers minimal benefit for $J \leq 20$. As this algorithm prioritizes practical applications, we leave this theoretical refinement as an interesting open question. 
% \end{remark}

\noindent The proof of Proposition \ref{prop:alg-R-approx}  can be found in Appendix \ref{app-sec:prop:alg-R-approx-proof}.   
The analysis in Proposition~\ref{prop:alg-R-approx} and the examples in Table~\ref{table:transposed_epsilon_alpha_J} show that the optimal~$\epsilon$ varies based on the instance parameter~$J$. This motivates our practical approach to sample~$\epsilon$ uniformly from the range $[0.01, 0.6]$ to explore the solution space effectively.

{\section{Numerical Experiments}\label{sec:computational-exp}

In this section, we present results of computational experiments based on the data provided in  \cite{bertsimas2021data}. The data uses the road network of the greater Boston area. Trip requests data are obtained from  \cite{mbta2014}. The total demand consists of $93,826$ passenger requests, with $597$ OD pairs. For more detailed information, please refer to \cite{bertsimas2021data}. We also conduct numerical experiments on synthetic data to further demonstrate the computational efficiency of our algorithms. These results are presented in the Appendix \ref{sec:app-synthetic}.

For each OD pair $(i,j)$, let $D_s$ denote the shortest distance between origin $i$ 
and destination $j$, $D_l$ the distance traveled on line $l$ from boarding to alighting 
stop (optimized over all feasible stop pairs on $l$), and $D_{W1}$, $D_{W2}$ the 
walking distances from the origin to the nearest stop on $l$ and from the nearest stop 
to the destination, respectively. We define the reward as 
\begin{align}\label{eq:reward}
v_{b,l,(i,j)} = \max\!\left\{0,\; 
\frac{2.6\cdot D_s - m_b \cdot D_l - 6\cdot (D_{W1}+D_{W2})}{D_s}\right\},
\end{align}
where the factor $2.6$ excludes routes with detours exceeding typical maximum urban 
circuity indices \cite[Section~4.1.1]{yang2020path}, the factor $6$ reflects that 
walking is roughly six times slower than bus travel, and $m_b \geq 1$ is a 
bus-type-dependent multiplier that captures the extra time cost due to more frequent 
passenger boarding and alighting on larger-capacity buses (see, e.g., 
\cite{vijayakumar1990analysis}). Note that this reward specification can be readily 
adjusted to incorporate other user preferences (e.g., road quality along the route) and 
alternative system configurations (e.g., multi-modal first- and last-mile access).

Unless otherwise stated, all experiments use the following heterogeneous-fleet setting. 
We consider buses with capacities of 30, 40, and 50 passengers, evenly distributed 
among the fleet (e.g., a 60-bus fleet would have 20 buses of each capacity). The 
bus-type multiplier in \eqref{eq:reward} is set to $m_b \in \{1.0, 1.05, 1.1\}$ for 
capacities $30$, $40$, and $50$, respectively. Additionally, we consider two fuel 
types: traditional and electric. Within each capacity, half the buses use traditional 
fuel and half are electric. In this setting, we can partition the candidate bus set into 
$6$ groups, i.e.\ $W = 6$ in \eqref{LP-relaxation-aggregated}. For each of the $6$ 
groups, a different set of $3000$ candidate lines is generated according to the 
methodology from \cite{silman1974planning}. All buses within the same group share a 
common set of $3000$ candidate lines.

% The denominator $1/\sqrt{C_b}$ captures the service quality benefit of smaller vehicles favored by riders (\cite{ker1995optimal, mahmoudi2025optimal}).

For each bus $b$ and each line $l$ in its candidate line set, we define three types of costs. The first cost, directly proportional to the line's length, is denoted by $T_l$ and is called Distance Cost. The Acquisition Cost for the bus is set at $2\cdot \sqrt{C_b}$ for electric buses and $\sqrt{C_b}$ for buses using traditional fuel, reflecting the higher purchase prices associated with electric vehicles (\cite{maloney2019electric}). The Emission Cost, dependent on the bus's capacity and the line's length, is defined as $T_l\cdot \sqrt{C_b}$ for traditional fuel buses, but is reduced to $3\cdot T_l\cdot \sqrt{C_b}/10$ for electric buses to account for their much lower environmental impact. Resource limits are normalized to one, with costs scaled by their corresponding budgets.

For the rest of this section, we present a series of computational experiments to validate our proposed model and approach. First, we demonstrate the superiority of heterogeneous fleets over homogeneous alternatives. Next, we establish the necessity of multi-resource optimization by highlighting the limitations of single-constraint models. Thirdly, we assess the scalability of our approach on instances with different scales to confirm its practical applicability. Finally, we conduct an ex-post analysis of passenger preferences to demonstrate that our assignments align well with individual rider preferences.

\subsection{The Superiority of a Heterogeneous Fleet}\label{sec:heter}

A core feature of our LPRC model is its ability to optimize a \textbf{heterogeneous fleet} by composing a strategic mix of different bus types to maximize the overall system performance. To quantify the benefits of our approach, we compared our heterogeneous-fleet solution against six distinct \textbf{homogeneous fleet} configurations. The solutions for the homogeneous fleets were obtained by solving their respective integer programs to near-optimality (1\% MIP gap) using Gurobi. For each \textbf{heterogeneous case}, in contrast, the solution was found by obtaining the LP relaxation and then performing 3,000 independent runs of Algorithm \ref{alg:low-cost-modified}. The best-performing solution was then selected.

The experimental design is as follows. The heterogeneous fleet consists of 30 buses with at most 5 units per type, where each type has access to its own dedicated set of 3,000 candidate lines. Each homogeneous fleet, by contrast, deploys 30 buses of a single type with access to the combined pool of all 18,000 candidate lines. This design \emph{favors the homogeneous fleet} in two ways. First, it permits up to 30 buses of a single type rather than the cap of 5. Second, it provides each bus access to the full combined line pool rather than a single type-specific subset. The performance advantage of the heterogeneous fleet therefore reflects a genuine benefit of fleet diversification rather than any other structural advantage in the experimental design. The experiments were conducted under three distinct budget scenarios representing small, medium, and large levels of resource availability. Specifically, we scaled the budget parameters to make resource constraints progressively less restrictive. This allowed the heterogeneous fleet to deploy approximately 10, 15, and 20 buses under the small, medium, and large budget scenarios, respectively.

\begin{figure}[htbp]
    \centering    \includegraphics[width=1\linewidth]{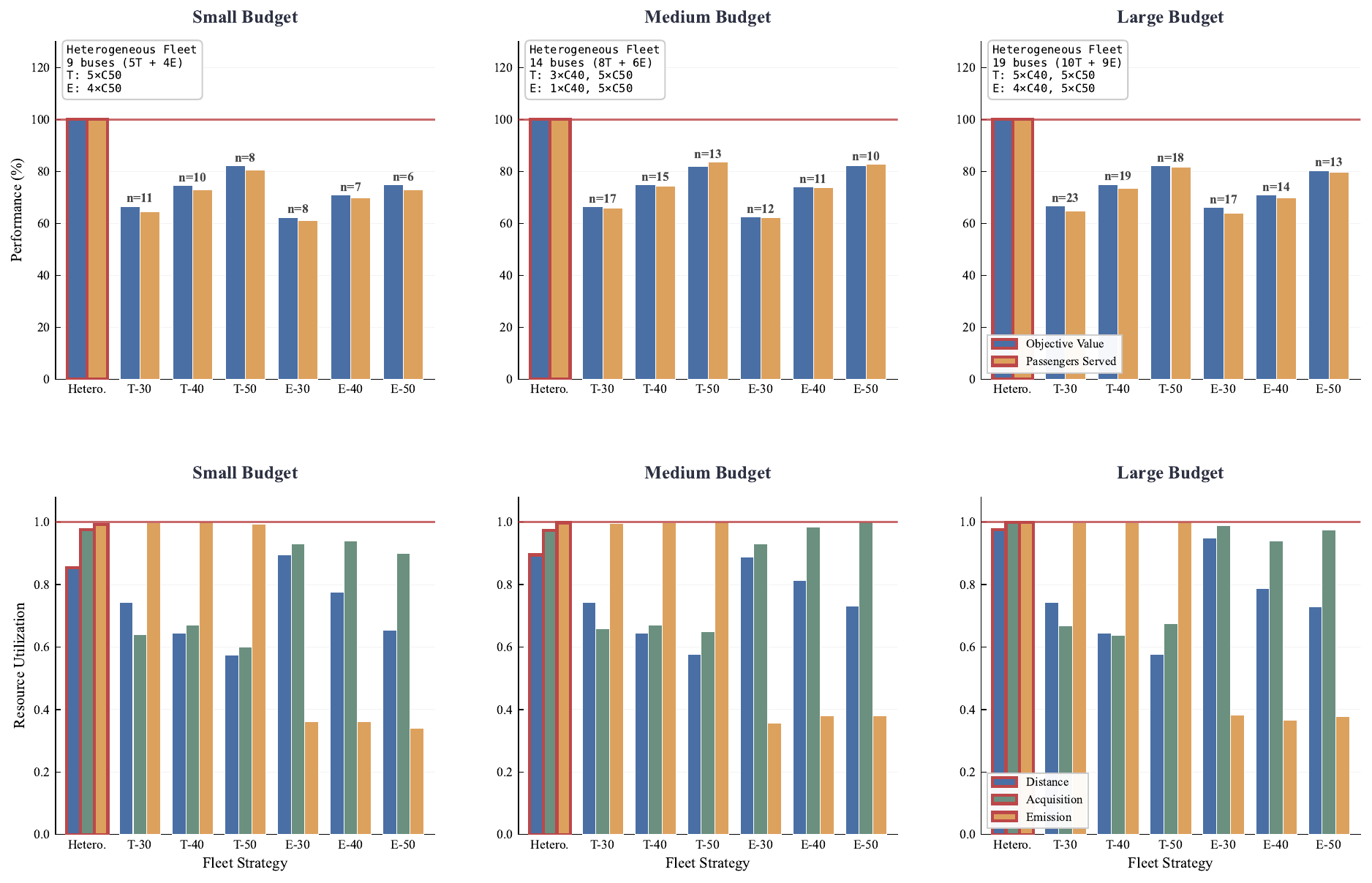}
    \caption{Performance comparison and resource utilization of heterogeneous versus homogeneous fleets across small, medium, and large budget scenarios. }
    \label{fig:het_adv}
\end{figure}

Figure~\ref{fig:het_adv} compares the performance and resource utilization of heterogeneous fleets against homogeneous fleets across three budget scenarios. Each column corresponds to a budget level (small, medium, large), with performance metrics above and resource utilization below. The x-axis labels denote fleet strategies: ``Hetero.'' for heterogeneous fleets, ``T-30/40/50'' for homogeneous traditional bus fleets with capacities of 30, 40, and 50 passengers, and ``E-30/40/50'' for homogeneous electric bus fleets with the same capacities. The annotation $n$ above each bar indicates the number of buses selected in that fleet. The text box in each performance panel details the heterogeneous fleet composition.

In the \textbf{small budget} scenario (first column of Figure~\ref{fig:het_adv}), the advantage of a heterogeneous fleet is particularly evident. It assembles a lean, highly optimized fleet of just 9 buses composed of five traditional and four electric vehicles, all with a capacity of 50 passengers. This composition is achieved by carefully balancing resource consumption, utilizing 85.5\% of the distance budget, 97.5\% of the acquisition budget, and 99.2\% of the emission budget. In stark contrast, the optimal Gurobi solutions for every homogeneous fleet strategy fall significantly short, achieving objective values that are 18\% to 38\% lower. The key reason for their underperformance is inefficient resource management (see Figure \ref{fig:het_adv}, bottom of column 1). Uniform electric fleets are constrained by high acquisition costs, leaving other resources underutilized. Conversely, traditional fleets are bottlenecked by emission limits, which restricts access to high-value routes. This performance gap persists even as resources become more plentiful (second and third columns of Figure~\ref{fig:het_adv}). With medium and large budgets, homogeneous strategies continue to lag by 18\% to 38\%. This consistent performance gap across all budget levels demonstrates the benefits of fleet heterogeneity.

We additionally evaluate a 60-bus fleet under proportionally equivalent budget scenarios. The results are reported in Figure~\ref{fig:het_adv_60bus} in Appendix~\ref{sec:app-het}, which show the same pattern as in the 30-bus case, with heterogeneous fleets delivering a significant advantage.

\subsection{The Importance of Multi-Resource Optimization}\label{sec:multi-resource-advantage}

Having established the advantages of fleet heterogeneity, we now examine whether multi-resource optimization is really necessary, or could be replaced with single-resource optimization. Using the same heterogeneous fleet throughout, we compared our \textbf{full multi-resource-constraint model} against several \textbf{single-resource-constraint models}. The multi-constraint solution was generated using Algorithm \ref{alg:low-cost-modified}, while the single-constraint solutions were found using Gurobi (1\% MIP gap).  We evaluate two simplified single-resource-constraint models: (i) one that enforces exactly one resource as a hard constraint and (ii) one that averages the normalized costs across all three resources with equal weights into a single constraint. 

In single-resource models, we penalize resource costs in the objective using coefficients small enough to discourage waste without altering the optimal solution.

\begin{figure}[h]
    \centering
    \includegraphics[width=\linewidth]{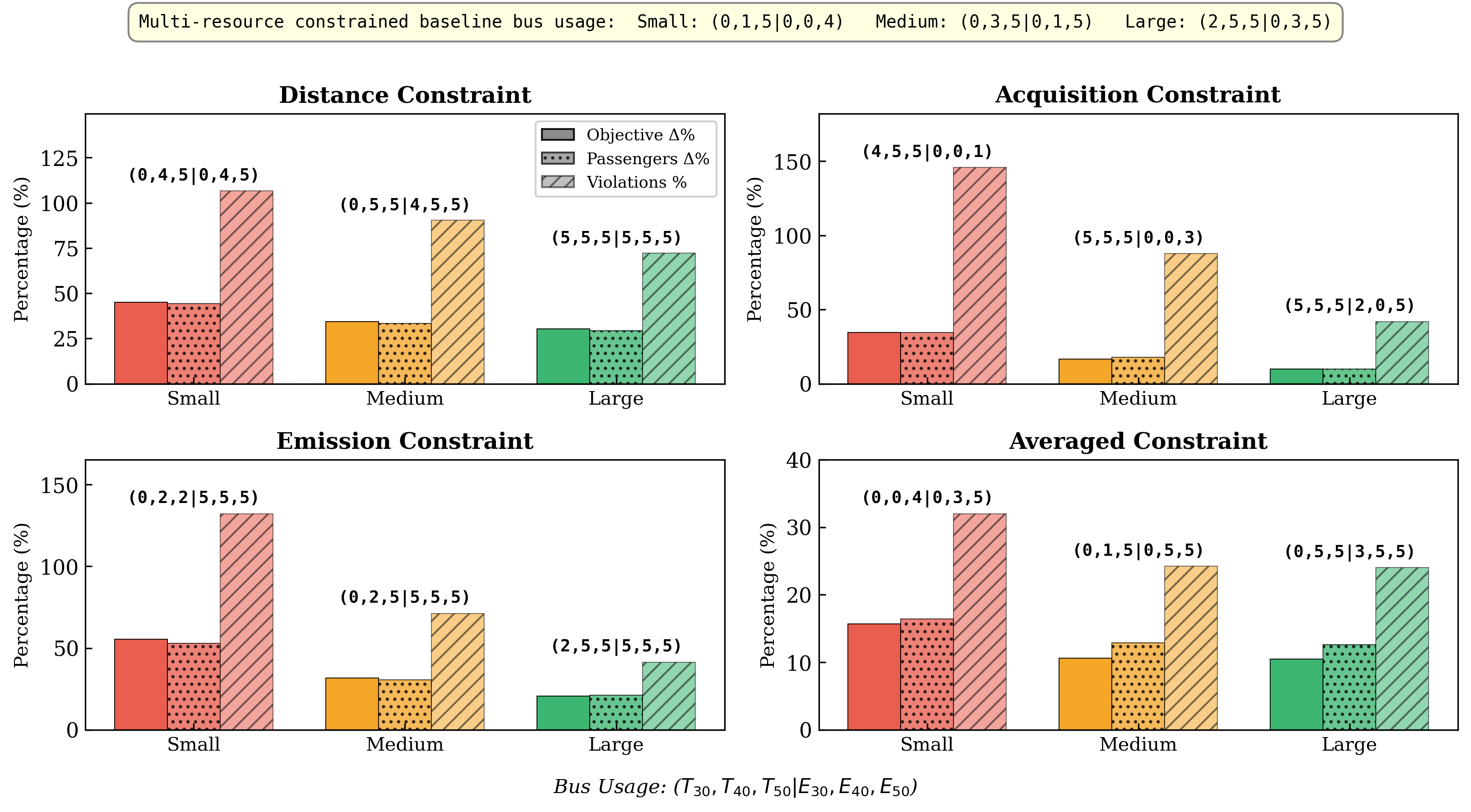}
    \caption{Performance comparison of single-resource optimization strategies across budget levels (30-bus).}
    \label{fig:strategy_comparison}
\end{figure}

We first evaluate a 30-bus heterogeneous fleet under the same setup of Section~\ref{sec:heter}, but with adjusted budgets across all three scenarios to avoid trivial solutions from single-resource benchmarks. Each panel in Figure~\ref{fig:strategy_comparison} represents a different constraint strategy, with bars showing the percentage increase in objective value (solid) and passenger coverage (dotted) relative to the multi-resource baseline, along with total budget violations (diagonal lines). Bus usage is annotated above each group as $(T_{30},T_{40},T_{50}|E_{30},E_{40},E_{50})$. The baseline box displays the feasible fleet configuration for each budget level derived from Algorithm~\ref{alg:low-cost-modified}.

The results reveal a consistent trade-off between objective improvement and constraint violation. Every single-resource strategy achieves higher objective values but at the cost of substantial constraint violations. At the Small budget level, the Emission, Distance, and Acquisition constraints yield objective gains of 55.3\%, 45.1\%, and 34.4\%, but incur total violations of 132.2\%, 106.7\%, and 145.7\%, respectively. These magnitudes decrease with larger budgets but the pattern persists. In particular, at the Large level, gains ranging from 9.9\% to 30.2\% still produce violations of 41.4\% to 72.2\%. This occurs simply because optimizing under a single resource exhausts the unconstrained ones. 

The Averaged approach, which aggregates all resources into a single weighted constraint, reduces total violations to between 24.0\% and 32.0\% but yields only 10.5\% to 15.7\% objective improvement.

We additionally evaluate a 60-bus fleet under the similar setup as the 30-bus experiment. The results are reported in Figure~\ref{fig:strategy_comparison_60} in Appendix \ref{sec:app-single-resource}, which show the same pattern as in the 30-bus case.

\subsection{Computational Advancements}\label{sec:computation-advance}

The previous experiments established the advantages of heterogeneous fleets and the necessity of multi-resource optimization. We now examine the scalability of Algorithm~\ref{alg:low-cost-modified}, which produced the benchmark solutions in those experiments. Specifically, we evaluate performance using data from the greater Boston road network (\cite{mbta2014,bertsimas2021data}). We also present additional experiments on synthetic data in Appendix \ref{sec:app-synthetic}
 
To accommodate the requirement that all available resources be scaled to $1$, costs are normalized to the $[0,1]$ range. Specifically, for each cost type $k$, a positive integer value $J_k$ is fixed, and all costs of this type are scaled proportionally so that the maximum cost of this type is $\frac{1}{J_k}$. Since one unit of each resource is available, a higher value of $J_k$ corresponds to higher availability of resource $k$. For simplicity, we used the same value $J_k=J$ for all three cost types.

The numerical experiments were performed using Python under the aforementioned parameters on a computing system equipped with an Intel(R) Core(TM) i9-14900K processor operating at 3.20 GHz and 32 GB of RAM. For each case, we obtain the LP solution, then perform 3,000 independent runs of Algorithm \ref{alg:low-cost-modified}, selecting the solution with the best objective value. Figure \ref{fig:comparison_results} presents the computational results, illustrating the performance of Algorithm \ref{alg:low-cost-modified} under different scenarios by varying the number of buses $M$ and the parameter $J$.

\begin{figure}
    \centering
    \includegraphics[width=\linewidth]{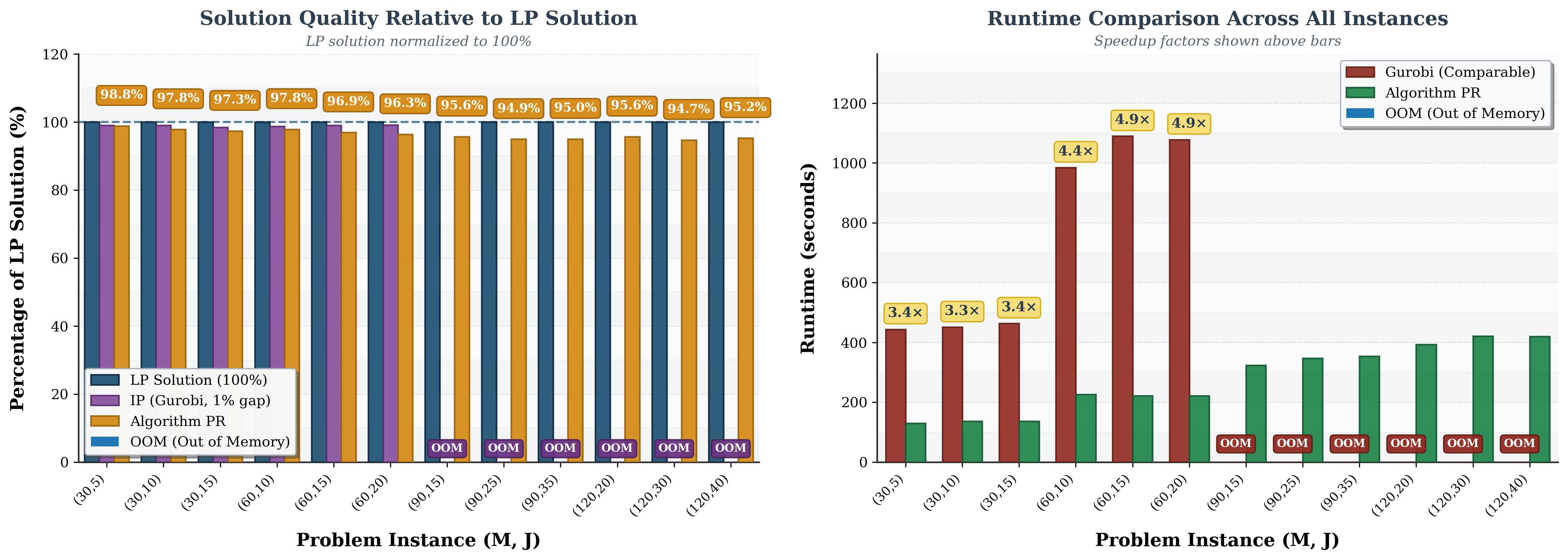}
    \caption{Computational Performance Comparisons.}
    \label{fig:comparison_results}
\end{figure}

The left panel of Figure \ref{fig:comparison_results} shows solution quality relative to the LP solution (normalized to 100\%). Algorithm \ref{alg:low-cost-modified} demonstrates remarkably consistent performance across all scenarios, achieving between 95\% and 98\% of the LP solution values. This consistency suggests that Algorithm \ref{alg:low-cost-modified} scales effectively with increasing problem size while maintaining near-optimal solution quality.
In contrast, when using Gurobi to directly solve the IP formulation, memory constraints become a critical limiting factor. For larger problems with $M \geq 90$, Gurobi fails to compute solutions entirely, encountering out-of-memory (OOM) errors.

The right panel of Figure \ref{fig:comparison_results} compares runtime performance. To ensure a fair comparison, we record the time at which Gurobi first reaches an intermediate solution slightly inferior to the output of Algorithm \ref{alg:low-cost-modified}. The speedup factors displayed above each bar pair demonstrate our algorithm's superior efficiency. It consistently achieves 3--5$\times$ faster computation while producing solutions of equal or better quality relative to this benchmark. For instance, in the case $(M = 60, J = 15)$, Algorithm \ref{alg:low-cost-modified} finds a superior solution in just 222 seconds, whereas Gurobi requires 1{,}090 seconds to reach an objective value that is still slightly lower. Moreover, Algorithm \ref{alg:low-cost-modified} scales robustly, with execution times remaining between 130 and 420 seconds, while Gurobi's runtime grows substantially until it eventually fails due to memory constraints.

\begin{figure}[htbp]
    \centering
    % Image 1: OD Flows
    \begin{minipage}[b]{0.32\textwidth}
        \centering
        \includegraphics[width=1\linewidth]{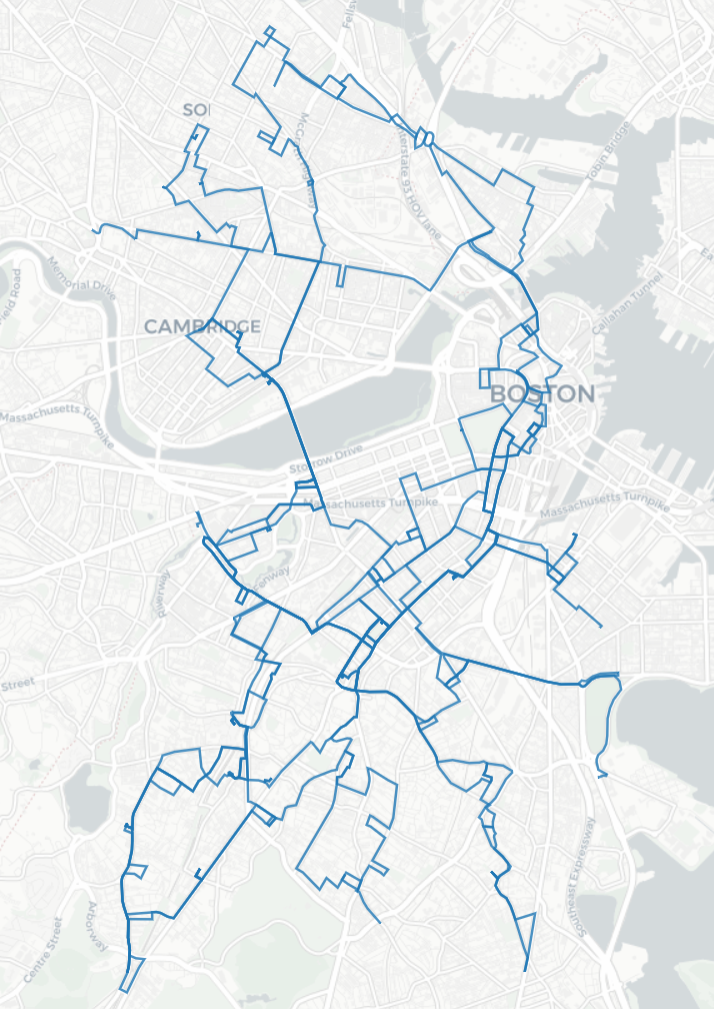}
        \par\vspace{-2pt} 
        \footnotesize{(a) Selected Lines ($M=90,~J=15$)}
        \label{fig:lines_15}
    \end{minipage}
    \hfill 
    % Image 3: J=25
    \begin{minipage}[b]{0.32\textwidth}
        \centering
        \includegraphics[width=1\linewidth]{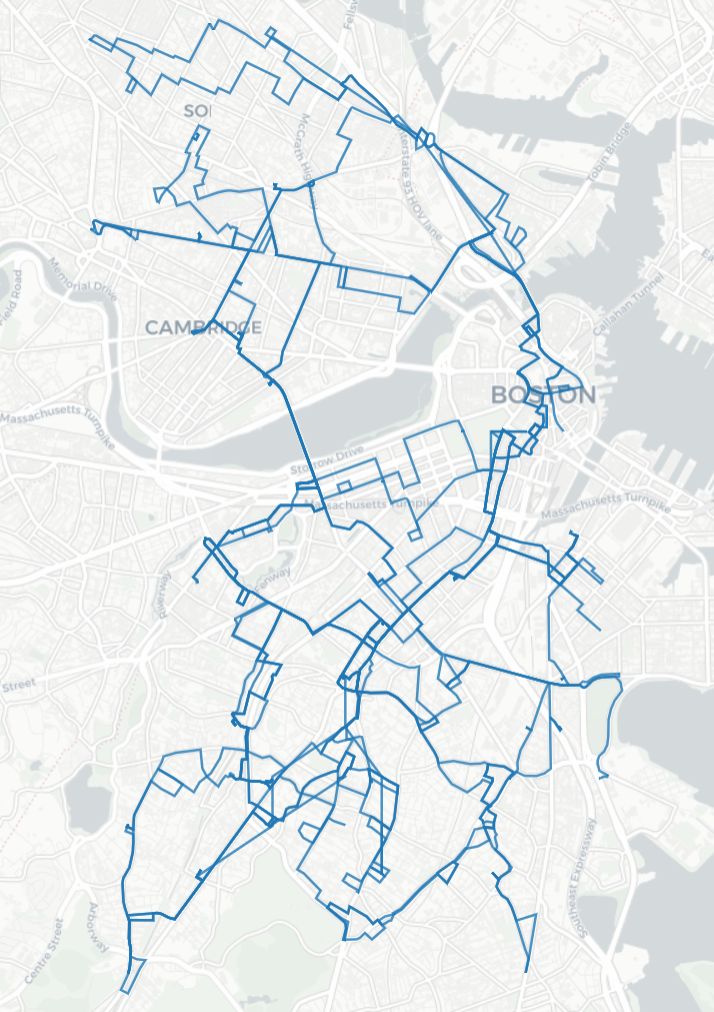}
        \par\vspace{-2pt} 
        \footnotesize{(b) Selected Lines ($M=90,~J=25$)}
        \label{fig:lines_25}
    \end{minipage}
    \hfill 
    % Image 4: J=35
    \begin{minipage}[b]{0.32\textwidth}
        \centering
        \includegraphics[width=1\linewidth]{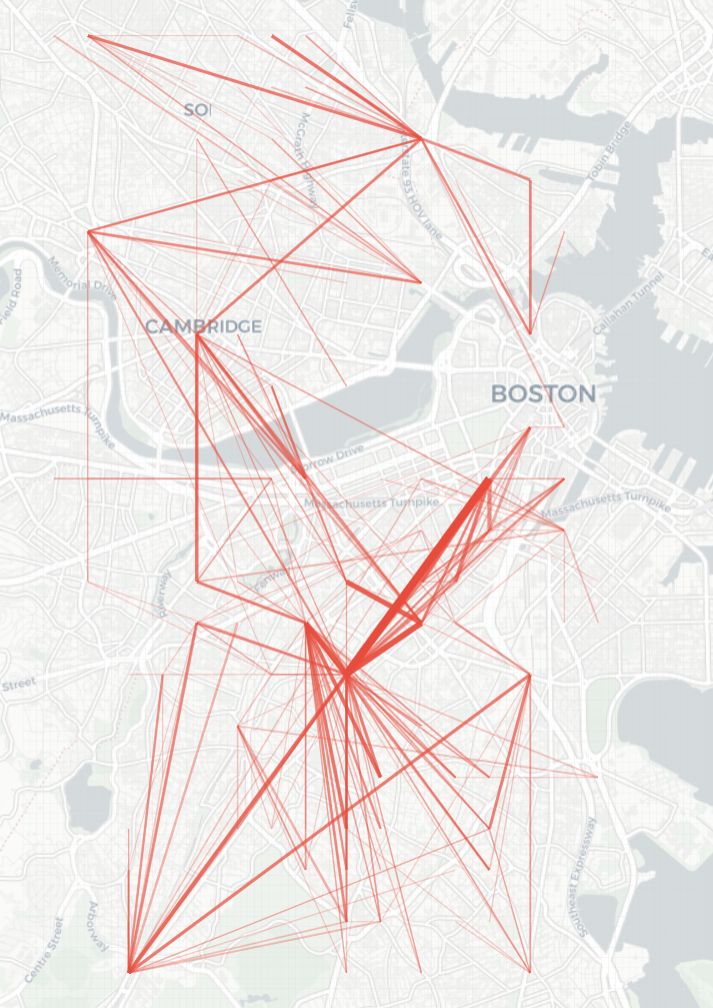}
        \par\vspace{-2pt} 
        \footnotesize{(c) Top-200 OD Flows}
        \label{fig:od_flows_vis}
    \end{minipage}

    \caption{Optimization results.  (a, b) Optimized network under resource constraints. (c)  Top 200 OD flows.}
    \label{fig:optimization_results}
\end{figure}
To illustrate the resulting line planning from our algorithm, we present visualizations in Figure~\ref{fig:optimization_results}. Panel~(c) depicts the top 200 OD flows by demand over the Boston-Cambridge metropolitan area, where red segments represent individual OD pairs. Notable demand concentrations appear along the north-south axis connecting Cambridge to downtown Boston, with significant flows radiating southward from the urban core.

Panels~(a) and (b) display the transit networks generated by Algorithm \ref{alg:low-cost-modified} for a fleet of $M=90$  buses under resource constraint parameters $J=15$ and $J=25$ respectively, with routes visualized using realistic road geometries via the OSRM routing API. A comparison with the OD flows in panel~(c) reveals strong spatial alignment. The optimized routes consistently cover high-demand corridors, particularly the Cambridge--Boston spine and the southern radial arteries where passenger flows are concentrated. This correspondence confirms that Algorithm \ref{alg:low-cost-modified} effectively captures the underlying demand structure.

\subsection{Ex-post Analysis of Passenger Preferences} 
\label{app:pref-analysis}

As discussed in Section~\ref{sec:model-setting}, our formulation accommodates both Route Assignment and Route Choice perspectives through the reward parameters $v_{b,l,(i,j)}$. Here we verify that the resulting assignments consistently place passengers on high-quality lines that closely match their individual preferences.

For each served passenger, we identify the set of active bus--line pairs capable of serving that passenger's OD pair in the realized solution and compute the \emph{objective gap} between the assigned option and the best available alternative as
$
\text{gap}_{(i,j)} = ({v^{\star}_{(i,j)} - v^{\text{assigned}}_{(i,j)}})/{v^{\star}_{(i,j)}} \times 100\%,
$
where $v^{\star}_{(i,j)}$ is the reward of the best available option. A gap of $0\%$ means the passenger is assigned to their most preferred service.

We perform this analysis on solutions generated by Algorithm~\ref{alg:low-cost-modified} for a heterogeneous fleet of $M = 120$ candidate buses with $J \in \{20, 30, 40\}$. To isolate the effect of reward-aware optimization, we also run an ablation test where the LP and rounding use uniform rewards (one unit per served passenger) while the post-analysis still evaluates gaps against the original reward. The ablation test achieves LP integrality gaps below 3\% in all cases, confirming that the rounding procedure remains effective even under uniform rewards. Figure~\ref{fig:pref-gap-cdf} presents the cumulative distribution of the objective gap for both settings.

\begin{figure}[ht]
\centering
\begin{minipage}[t]{0.48\textwidth}
    \centering
    \includegraphics[width=\linewidth]{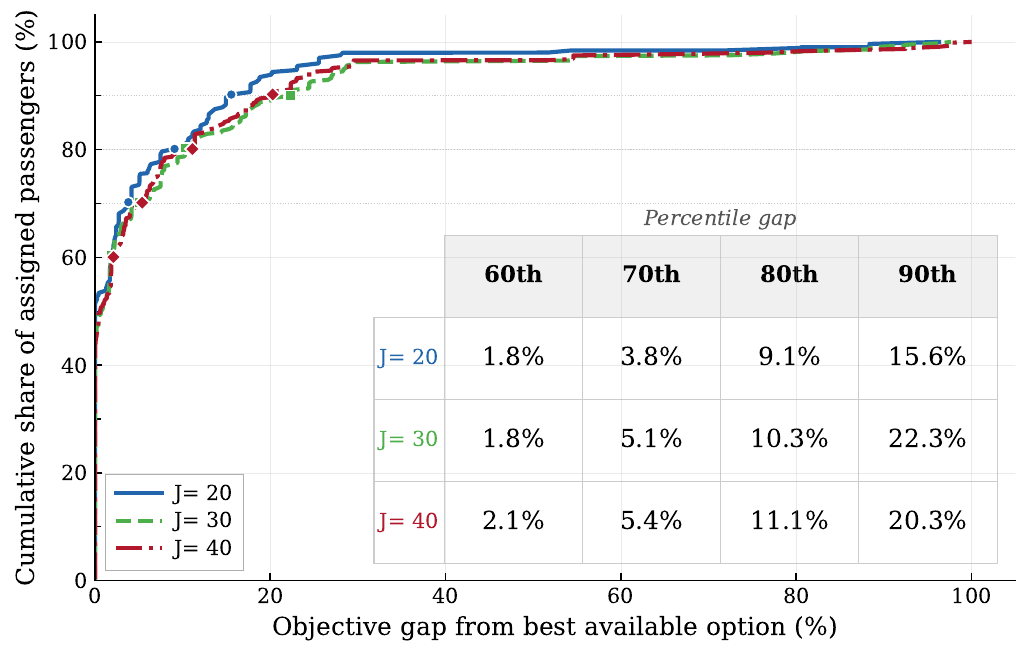}
\end{minipage}
\hfill
\begin{minipage}[t]{0.48\textwidth}
    \centering
    \includegraphics[width=\linewidth]{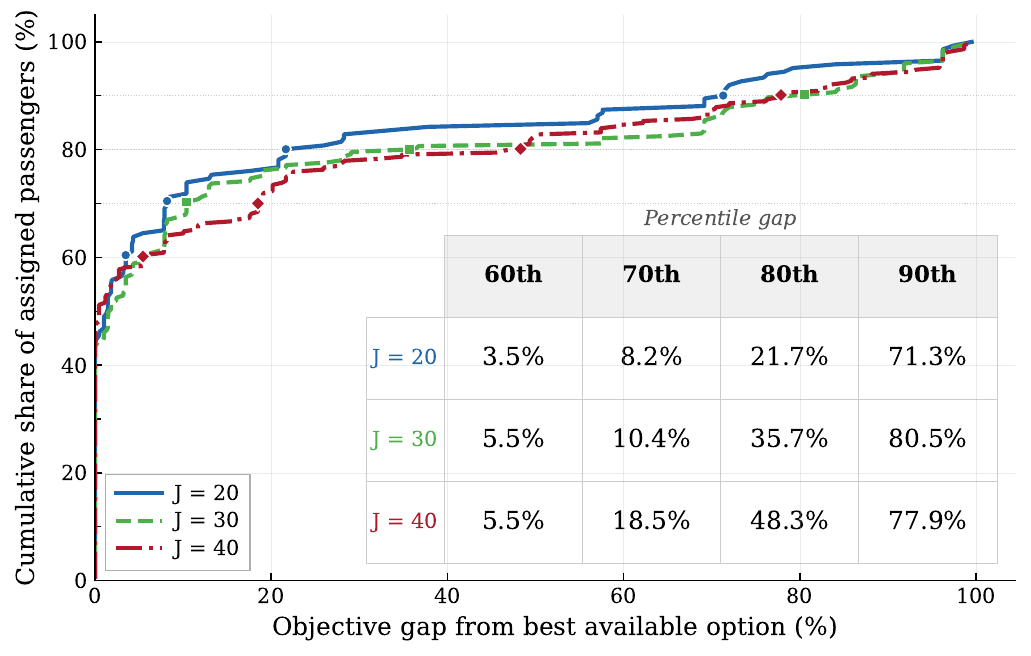}
\end{minipage}
\caption{Cumulative distribution of the objective gap from each passenger's best available option for $M = 120$ with $J \in \{20,30,40\}$. Left panel uses the original reward in optimization and right panel uses uniform reward (ablation).}
\label{fig:pref-gap-cdf}
\end{figure}
The results demonstrate that Algorithm~\ref{alg:low-cost-modified} with reward-aware
optimization assigns the vast majority of passengers to lines very close to their best
available option. In the left panel, roughly half of all served passengers are placed on
their best available line (0\% gap), and across all three settings of budget the
60th-percentile gap remains below $2.1\%$, the 70th-percentile gap stays within $5.4\%$,
and even the 80th-percentile gap ranges from only $9.1\%$ ($J{=}20$) to $11.1\%$
($J{=}40$). The three curves nearly overlap, indicating that reward-aware optimization
maintains tight preference alignment regardless of the amount of resources. The
ablation with uniform rewards (right panel) reveals a markedly different pattern. While a
comparable initial fraction of passengers still lands on their best option due to network
structure and capacity alone, the upper percentiles deteriorate sharply. The
70th-percentile gaps widen to $8.2\%$--$18.5\%$ (versus $3.8\%$--$5.4\%$ under
reward-aware optimization), the 80th-percentile gaps jump to $21.7\%$--$48.3\%$ (versus
$9.1\%$--$11.1\%$), and the 90th-percentile gaps reach $71.3\%$--$80.5\%$, roughly
$3{\times}$--$4{\times}$ their reward-aware counterparts. Moreover, the ablation curves
separate as $J$ increases, with the 80th-percentile gap growing from $21.7\%$ at $J{=}20$
to $48.3\%$ at $J{=}40$, suggesting that a reward-blind optimizer disperses passengers
across increasingly suboptimal alternatives when more resources are available. This contrast
confirms that the reward structure in our formulation plays a critical role in aligning
system-optimal assignments with individual passenger preferences. We conduct the same
analysis with $90$ candidate buses in Appendix~\ref{app:pref-analysis-90}, where the results exhibit
a similar pattern.
}

\section{Conclusion}\label{sec:conclusion}

This paper presents the \emph{Line Planning with Resource Constraints} (LPRC) framework, unifying heterogeneous fleets with multi-resource limits. Theoretically, we established a tight $1-1/e$ approximation guarantee for the base case and constant-factor bounds for general settings. We modified the algorithm for practical application, preserving a solid theoretical guarantee while ensuring computational scalability. In experiments, this approach consistently achieved 95\%--98\% of the LP bound, outperforming commercial solvers that failed due to memory constraints. Our results demonstrate that leveraging fleet heterogeneity improves service levels by over 20\% compared to homogeneous fleets, while multi-resource optimization is essential to prevent the operational infeasibility common in single-resource models.

A natural extension is to model stochastic origin--destination demand and maximize expected reward, with attention to reliability through chance constraints, distributionally robust formulations, and adaptive recourse as information arrives. Another critical direction is to incorporate fairness considerations, ensuring that the pursuit of aggregate efficiency and solid theoretical guarantee does not compromise equitable service access across diverse geographic or demographic groups. Finally, from a theoretical perspective, future work should aim to tighten the approximation ratios for the two extended cases, closing the gap between these general settings and the base case results.

\bibliography{sample}
\bibliographystyle{plainnat}

\newpage

%\THEEndNotes
%\begingroup \parindent 0pt \parskip 0.0ex \def\enotesize{\normalsize} \theendnotes \endgroup

% Appendix here
% Options are (1) APPENDIX (with or without general title) or
%             (2) APPENDICES (if it has more than one unrelated sections)
% Outcomment the appropriate case if necessary
%
% \begin{APPENDIX}{<Title of the Appendix>}
% \end{APPENDIX}
%
%   or
%
\begin{APPENDICES}

\section{Addtional Numerical Experiments}\label{sec:addtional-numerical}

\subsection{Experiments on Synthetic Data}\label{sec:app-synthetic}
While the Boston-area experiments validate our approach on real-world data, synthetic instances allow us to evaluate algorithmic performance across a broader range of problem scales under controlled conditions.

Our synthetic instance generator places stations uniformly at random on a Euclidean grid and generates OD demand using a gravity model with a monocentric urban structure. Unlike the Boston-area experiments where passengers board and alight only at their exact origin and destination stations, the synthetic instances allow passengers to walk to nearby stations within a specified radius. This increases the number of feasible passenger-line assignments and creates denser constraint structures, making the optimization problem more challenging. The reward $v_{b,l,(i,j)}$ is set to an independent uniform random draw from $[0,1]$ for each feasible bus--line--OD combination (determined by the walking radius and line coverage), and zero otherwise.

We generate 30 independent instances with 600 stations, 600 OD pairs, 120 candidate buses across three capacity levels and two fuel types as in the Boston experiment. For each bus group, 1{,}000 candidate lines (6{,}000 total) are generated using a similar method as in Section~\ref{sec:computational-exp}. The normalization parameter $J = 50$ places these instances in a computationally demanding regime. The Gurobi MIP benchmark is allowed 30 minutes with a 1\% optimality gap. 

Table~\ref{tab:synthetic-results} summarizes the results. Our algorithm consistently achieves 96.3--96.5\% of the LP upper bound on average across all instance groups, with individual instances ranging from 95.7\% to 97.0\%. Remarkably, Gurobi fails to find any feasible solution within the time limit in 22 out of 30 instances, none of which suffer from the memory issues reported in Section \ref{sec:computation-advance}. In the 8 instances where Gurobi does find solutions, it outperforms our algorithm in one case (98.5\% vs.\ 96.3\%), while substantially underperforming in others, with ratios as low as 82.3\%. Our algorithm completes in an average of 9 minutes per instance. These findings confirm that our approach scales well to large-scale problem instances where commercial solvers struggle.

\begin{table}[htbp]
\centering
\caption{Synthetic Instance Results}
\label{tab:synthetic-results}
\scriptsize
\begin{tabular}{cccc|cccc|cccc}
\toprule
\multicolumn{4}{c|}{\textbf{Instances 1--10}} & \multicolumn{4}{c|}{\textbf{Instances 11--20}} & \multicolumn{4}{c}{\textbf{Instances 21--30}} \\
\midrule
\textbf{Inst} & \textbf{Alg/LP} & \textbf{Gurobi/LP} & \textbf{Time (s)} & \textbf{Inst} & \textbf{Alg/LP} & \textbf{Gurobi/LP} & \textbf{Time (s)} & \textbf{Inst} & \textbf{Alg/LP} & \textbf{Gurobi/LP} & \textbf{Time (s)} \\
\midrule
1  & 96.4\% & 0\%    & 486 & 11 & 96.6\% & 0\%    & 507 & 21 & 96.4\% & 0\%    & 570 \\
2  & 96.3\% & 0\%    & 446 & 12 & 97.0\% & 0\%    & 635 & 22 & 96.1\% & 0\%    & 564 \\
3  & 96.4\% & 0\%    & 439 & 13 & 96.3\% & 0\%    & 475 & 23 & 96.5\% & 0\%    & 631 \\
4  & 96.2\% & 0\%    & 590 & 14 & 96.3\% & 0\%    & 452 & 24 & 96.5\% & 0\%    & 659 \\
5  & 96.5\% & 82.7\% & 461 & 15 & 97.0\% & 88.8\% & 460 & 25 & 96.0\% & 0\%    & 460 \\
6  & 96.5\% & 82.3\% & 506 & 16 & 96.3\% & 95.9\% & 498 & 26 & 96.4\% & 0\%    & 611 \\
7  & 96.2\% & 96.0\% & 499 & 17 & 96.2\% & 0\%    & 455 & 27 & 97.0\% & 0\%    & 639 \\
8  & 96.3\% & 98.5\% & 248 & 18 & 96.4\% & 96.4\% & 469 & 28 & 96.6\% & 0\%    & 677 \\
9  & 96.3\% & 0\%    & 475 & 19 & 97.0\% & 0\%    & 541 & 29 & 96.5\% & 0\%    & 619 \\
10 & 95.7\% & 0\%    & 482 & 20 & 96.3\% & 91.4\% & 568 & 30 & 96.3\% & 0\%    & 549 \\
\midrule
\textbf{Avg} & \textbf{96.3\%} & \textbf{--} & \textbf{463} & \textbf{Avg} & \textbf{96.5\%} & \textbf{--} & \textbf{506} & \textbf{Avg} & \textbf{96.4\%} & \textbf{--} & \textbf{598} \\
\bottomrule
\end{tabular}
\end{table}

\subsection{Additional Experiments to Section \ref{sec:heter}}\label{sec:app-het}
We additionally evaluate a 60-bus fleet under proportionally equivalent budget scenarios, where small, medium, and large budgets permit approximately $20$, $30$, and $40$ heterogeneous buses to be deployed, respectively. The results are reported in Figure \ref{fig:het_adv_60bus}, which show the same pattern as in the 30-bus case, with heterogeneous fleets delivering a significant advantage.

\begin{figure}[htbp]
    \centering
    \includegraphics[width=\linewidth]{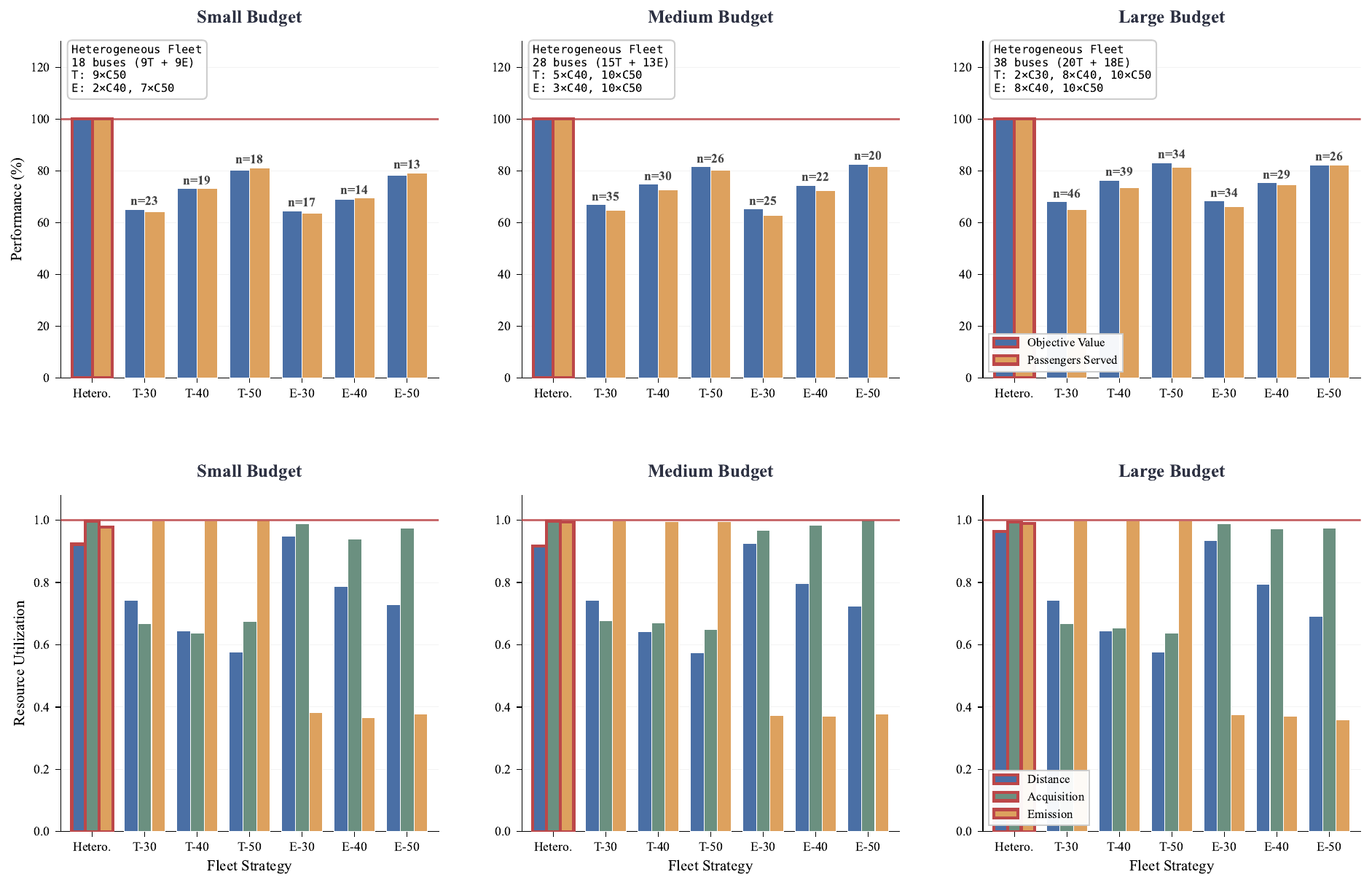}
    \caption{Performance and resource utilization comparison of heterogeneous versus homogeneous 60-bus fleets across budget scenarios.}
    \label{fig:het_adv_60bus}
\end{figure}

\subsection{Additional Experiments to Section \ref{sec:multi-resource-advantage}}\label{sec:app-single-resource}
We additionally evaluate a 60-bus fleet under the similar setup as the 30-bus experiment. The results are reported in Figure~\ref{fig:strategy_comparison_60}, which show the same pattern as in the 30-bus case.

\begin{figure}[h]
    \centering
    \includegraphics[width=\linewidth]{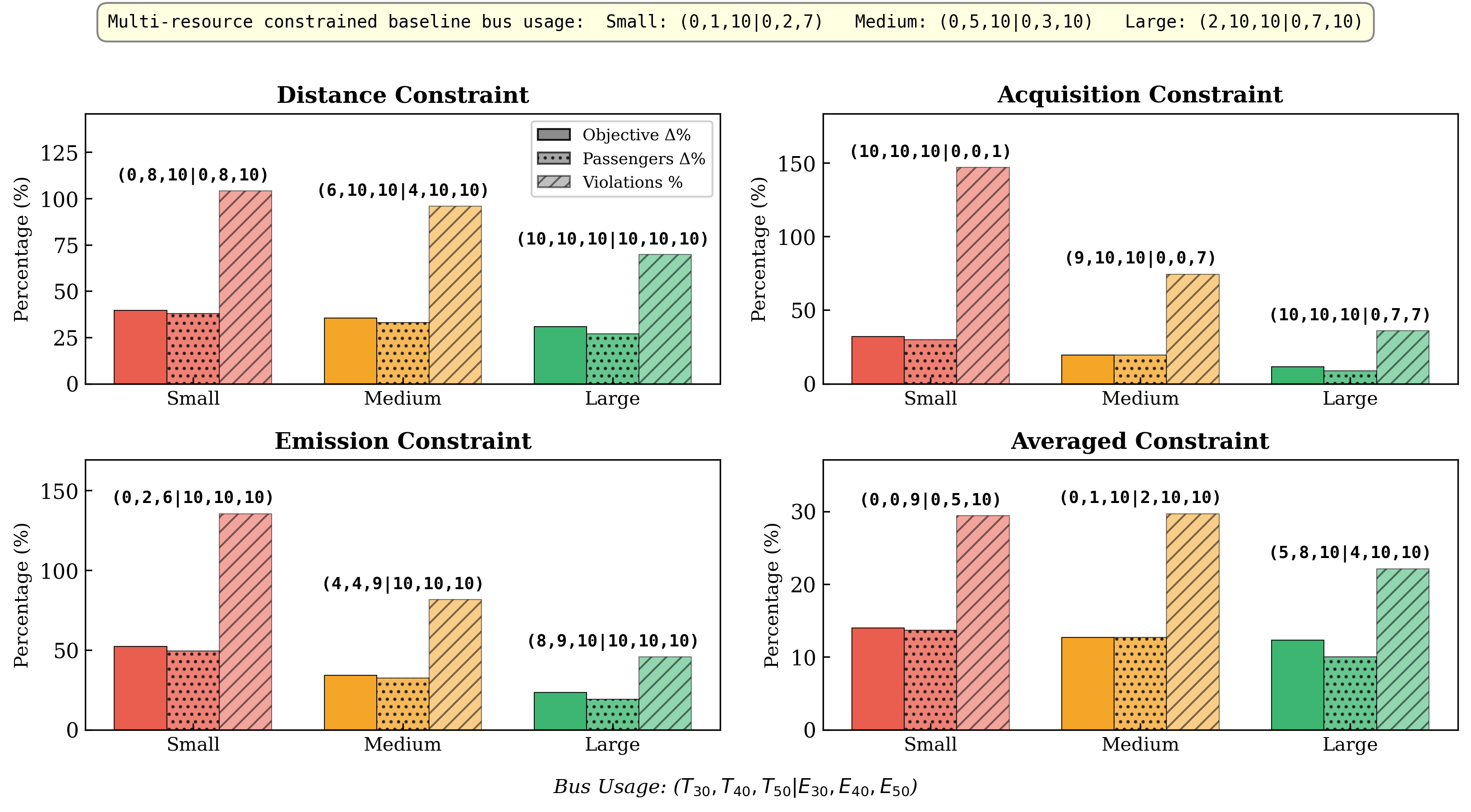}
    \caption{Performance comparison of single-resource optimization strategies across budget levels (60-bus).}
    \label{fig:strategy_comparison_60}
\end{figure}

\subsection{Additional Experiments to Section \ref{app:pref-analysis}}\label{app:pref-analysis-90}
Figure~\ref{fig:pref-gap-cdf-90} replicates the preference gap analysis of
Section~\ref{app:pref-analysis} for a smaller fleet of $M = 90$ candidate buses with
$J \in \{15, 25, 35\}$. The results confirm the same pattern observed for $M = 120$.
Under reward-aware optimization (left panel), the 60th-percentile gap remains below
$2.2\%$ and the 80th-percentile gap stays within $13.7\%$ across all settings of $J$.
The uniform-reward ablation (right panel) again shows substantial degradation in the
upper percentiles, with 80th-percentile gaps rising to $8.2\%$--$50.2\%$ (versus
$4.2\%$--$13.7\%$) and 90th-percentile gaps reaching $37.8\%$--$83.4\%$ (versus
$9.6\%$--$20.5\%$). These findings reinforce that the reward structure is essential for
aligning system-optimal assignments with passenger preferences across different fleet
sizes.

\begin{figure}[ht]
\centering
\begin{minipage}[t]{0.48\textwidth}
    \centering
    \includegraphics[width=\linewidth]{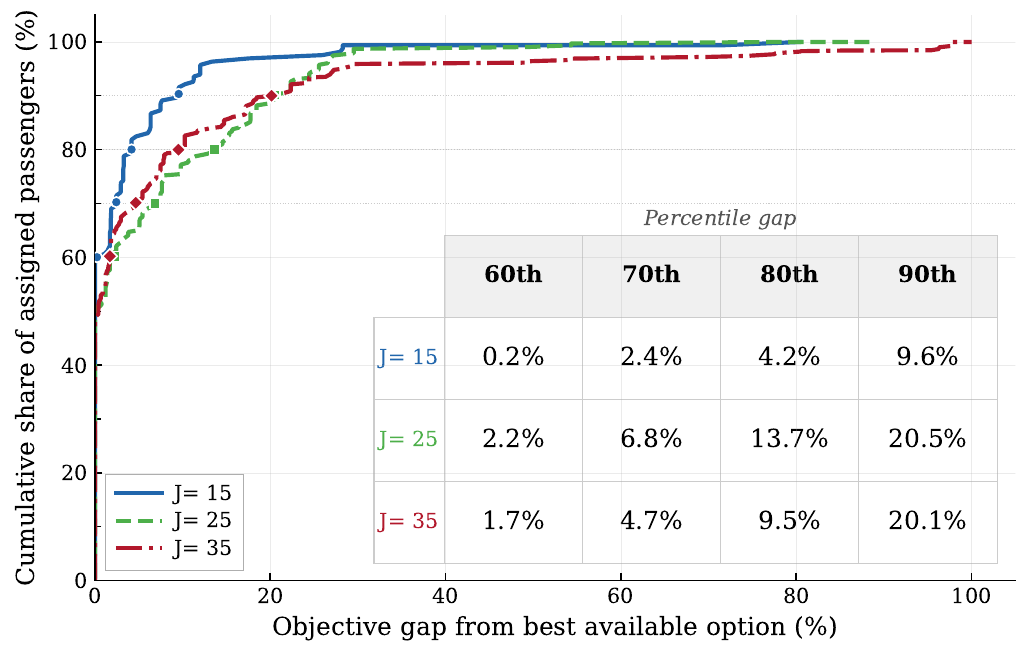}
\end{minipage}
\hfill
\begin{minipage}[t]{0.48\textwidth}
    \centering
    \includegraphics[width=\linewidth]{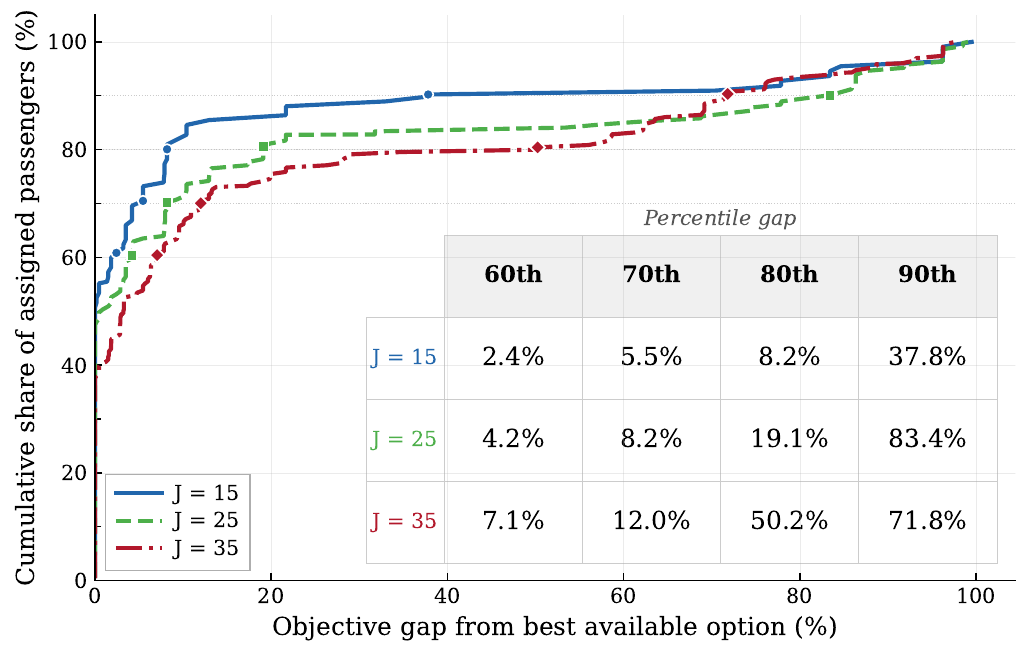}
\end{minipage}
\caption{Cumulative distribution of the objective gap from each passenger's best
available option for $M = 90$ with $J \in \{15, 25, 35\}$. Left panel uses the original
reward in optimization and right panel uses uniform reward (ablation).}
\label{fig:pref-gap-cdf-90}
\end{figure}

\section{Proof of Lemma \ref{lem:LP-relaxation-solvable}}\label{app-sec:lem:LP-solvable}

\begin{proof}
The dual problem of the LP relaxation (\ref{LP-relaxation}) can be written as: 
\begin{subequations}\label{dual-for-rounding-2}
\begin{align}
    \min_{q,u,w}\quad &\sum_{b\in [M]}q_b + \sum_{(i,j)\in D}d_{(i,j)}\cdot w_{(i,j)} + \sum_{k\in [K]} u_k\\
    \text{s.t.}\quad &q_b+\sum_{(i,j)\in D^{b,l}}\theta_{b,l,(i,j)}^{(t)}w_{(i,j)}  + \sum_{k\in [K]}c_{b,l}^{(k)}\cdot u_k\geq \sum_{(i,j)\in D^{b,l}}v_{b,l,(i,j)}\theta_{b,l,(i,j)}^{(t)},~\forall b,~ l,~t\label{eq:dual-ellipsoid}\\
    & w_{(i,j)}\geq 0 \ \text{and}\ u_k\geq 0.
\end{align}
\end{subequations}
It has an exponential number of constraints and a polynomial number of variables. Given specific values for the dual variables $q^*$, $u^*$ and $w^*$, we can check feasibility considering separately each $b\in [M]$ and $l\in \mathcal{L}_b$. That is, we need to check whether a $\theta_{b,l}^{(t)}$ exists for some $t\in [n_{b,l}]$ such that $\sum_{(i,j)\in D^{b,l}}\left(v_{b,l,(i,j)}-w^*_{(i,j)}\right)\theta_{b,l,(i,j)}^{(t)}$ exceeds $q^*_b+\sum_{k\in [K]}c_{b,l}^{(k)}\cdot u^*_k$. To do this, for each $b\in [M]$ and $l\in \mathcal{L}_b$, it is sufficient to solve:
 \begin{align*}
\max_{\theta}\quad&\sum_{(i,j)\in D^{b,l}}\left(v_{b,l,(i,j)}-w^*_{(i,j)}\right)\theta_{b,l,(i,j)}\\
     \text{s.t.}\quad &\theta_{b,l}\in \mathcal{P}(b,l)\cap \mathbb{Z}^{|D^{b,l}|}.
 \end{align*}

For each $b$ and $l$, the $0$–$1$ coefficient matrix formed by the constraints $\sum_{(i,j) \in D^{b,l}(e)} \xi_{b,l,(i,j)} \leq C_b$, $e\in l$ of $\mathcal{P}(b,l)$ has columns with consecutive $1$s. This is because the arcs traversed by any OD pair on line $l$ are always consecutive. Consequently, this matrix is totally unimodular~(\cite{fulkerson1965incidence}). Since $C_b \in \mathbb{Z}$ and $d_{(i,j)} \in \mathbb{Z}$ for each $(i,j) \in D$, the extreme points of $\mathcal{P}(b,l)$ are integral. Therefore, we only need to solve the following polynomial-size LP:
\begin{subequations}\label{eq:LP-subproblem}
    \begin{align}
 \max_{\theta}\quad&\sum_{(i,j)\in D^{b,l}}\left(v_{b,l,(i,j)}-w^*_{(i,j)}\right)\theta_{b,l,(i,j)}\\
     \text{s.t.}\quad &\theta_{b,l}\in \mathcal{P}(b,l).
 \end{align}
\end{subequations}

With the application of the ellipsoid method (using the above feasibility oracle) and standard reconstruction from the generated constraints/columns, we obtain in polynomial time an exact optimal solution of the dual. Using the ellipsoid method, the primal LP relaxation~\eqref{LP-relaxation} can be solved exactly in time polynomial in the input size (see, e.g., \cite[Section~4.3]{williamson2011design}).
\end{proof}

\bigskip

\section{Tightness of the $\left(1-\tfrac{1}{e}\right)$ Approximation for LPRC without Line Costs}\label{app-sec:thm:hardness}

In this section, we demonstrate that the approximation ratio for LPRC without costs cannot exceed $\left(1-\frac{1}{e}\right)$, and thus that Algorithm \ref{alg:simple-alg} achieves the optimal approximation ratio. We use a known hardness result for the max $k$-cover problem. We show that any instance of the max $k$-cover problem can be transformed into an instance of LPRC, so the hardness result extends to LPRC.

\begin{definition}\label{def:max-k}
Given a set $G = \{p_1, \dots, p_n\}$, and $F = \{G_1, G_2, \dots, G_L\}$, which is a collection of subsets of $G$, and an integer $k>0$, max $k$-cover is the problem of selecting $k$ sets from $F$ such that their union contains as many elements as possible.
\end{definition}

The following hardness result has been proven in \cite{feige1998threshold}. 

\begin{theorem} [\cite{feige1998threshold}]
For any constant $\epsilon > 0$,  unless $P = NP$, max $k$-cover problem cannot be approximated in polynomial time with an approximation ratio of $(1 - \frac{1}{e} + \epsilon)$.
\end{theorem}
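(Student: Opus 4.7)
The plan is to prove the theorem by the reduction due to Feige, which combines a strong multi-prover interactive proof system for 3SAT with a combinatorial gadget called a partition system. The overall target is: for any $\delta > 0$ obtain a polynomial-time map from 3SAT instances to max $k$-cover instances such that satisfiable formulas yield instances with a full $k$-cover, while unsatisfiable formulas yield instances where no $k$-cover covers more than a $1 - 1/e + \epsilon$ fraction of the ground set, with $\epsilon$ a function of $\delta$ that tends to $0$. The first ingredient is, for any constant $\delta > 0$, a $k'$-prover one-round proof system for 3SAT with perfect completeness and soundness at most $\delta$, where $k'$ is a constant depending on $\delta$ and the answer alphabet is of constant size. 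This is built by starting from the two-prover one-round system for Label Cover (obtained from the PCP theorem), amplifying the soundness via Raz's parallel repetition theorem, and then simulating the amplified two-prover system with $k'$ provers via a standard consistency test.

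The second ingredient is an $(m, L, k, 1/e - \epsilon')$-partition system: a ground set $B$ of size $m$ equipped with $L$ distinct partitions of $B$ into $k$ equal parts, such that any choice of $k$ parts drawn from at least two different partitions covers at most a $1 - 1/e + \epsilon'$ fraction of $B$. For $m$ polynomial in $L$, $k$, and $1/\epsilon'$ such systems exist by a probabilistic argument and can be constructed in polynomial time. The constant $1/e$ enters here precisely because $k$ parts drawn uniformly at random, one from each of $k$ different partitions, cover in expectation $1 - (1 - 1/k)^k \to 1 - 1/e$ of $B$; the partition system is engineered so that \emph{no} choice of $k$ parts from mixed partitions does better than this threshold.

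For the reduction, given a formula $\phi$ I run the proof system and create, for each random string $r$ of the verifier, an independent partition-system universe $B_r$ whose $L$ partitions are indexed by the tuples of queries the verifier might issue under $r$. The ground set of the max $k$-cover instance is the disjoint union of the $B_r$. For each (prover, question, answer) triple $(i, q, a)$ I create a set consisting of, for every $r$ such that the verifier asks prover $i$ the question $q$ and $a$ is part of a consistent accepting answer vector on $r$, the part of $B_r$ corresponding to $a$ inside the appropriate partition. The cover budget is set so that a valid prover strategy (one answer per (prover, question) pair) corresponds to a $k$-cover. For completeness, a satisfying assignment induces provers whose answers are always accepted, covering every $B_r$ entirely. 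For soundness, any $k$-cover induces a (possibly inconsistent) prover strategy; by soundness of the proof system only a $\delta$ fraction of the $r$'s are accepted, and for the remaining $(1 - \delta)$ fraction the selected parts of $B_r$ come from at least two different partitions, so they cover at most $1 - 1/e + \epsilon'$ of $B_r$. The overall covered fraction is therefore at most $\delta + (1 - \delta)(1 - 1/e + \epsilon') \le 1 - 1/e + \epsilon$ for $\delta, \epsilon'$ chosen small.

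The main obstacle is engineering the soundness side so that the $1/e$ threshold appears crisply. This requires driving the proof-system soundness well below $1/e$, which forces the use of Raz's parallel repetition rather than any elementary PCP argument, and simultaneously requires the partition system's universal property to hold against \emph{every} choice of $k$ parts from multiple partitions, not merely random ones. Making these two ingredients interlock so that the $\delta$ from the proof system and the $\epsilon'$ from the partition system combine into a single $\epsilon$ strictly below $1/e$, while keeping $m$, $L$, and $k'$ all of size polynomial in the 3SAT instance, is the crux of the argument.
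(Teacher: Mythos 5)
The paper does not actually prove this statement: it is imported verbatim from Feige's work and used as a black box, so the only meaningful comparison is between your reconstruction and Feige's original argument. Your overall architecture is the right skeleton --- a $k'$-prover one-round system for 3SAT with constant soundness obtained via parallel repetition, one partition-system universe $B_r$ per random string of the verifier, cover sets indexed by (prover, question, answer) triples, and a budget equal to the number of (prover, question) pairs.

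There is, however, a genuine gap in the soundness step. No partition system with the property you postulate exists: if each of the $L$ partitions splits $B$ into $k$ equal parts and $k \geq 3$, then taking $k-1$ parts from one partition together with a single part from another already covers at least a $1 - \frac{1}{k} > 1 - \frac{1}{e}$ fraction of $B$, even though these $k$ parts are ``drawn from at least two different partitions.'' The property Feige actually uses is that any collection of parts coming from \emph{pairwise distinct} partitions --- at most one part per partition --- covers at most a $1-(1-\frac{1}{k})^{k}+\epsilon'$ fraction. To invoke this, the reduction needs more than the soundness you derive: it needs the $k'$-prover system to satisfy the \emph{strong} (pairwise) soundness guarantee that, for an unsatisfiable formula, with probability at least $1-\delta$ over $r$ no two provers give mutually consistent answers; only then do the parts selected inside a rejected $B_r$ land in pairwise distinct partitions. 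Your sketch establishes only the standard ``verifier rejects with probability $1-\delta$'' bound, which is not enough to trigger any valid partition-system property. A second, smaller omission: a $k$-cover need not select exactly one set per (prover, question) pair, so some $B_r$ may receive several parts associated with one prover and none with another; Feige handles this with an averaging and accounting argument that your sketch does not mention. As written, the claimed bound $\delta + (1-\delta)\bigl(1-\frac{1}{e}+\epsilon'\bigr)$ does not follow.
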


Building on this result, we show that LPRC is at least as hard as the max $k$-cover problem, establishing a similar inapproximability bound. 
  
\begin{theorem}\label{thm:hardness}
    Unless \( P = NP \), the LPRC problem cannot be approximated in polynomial time within an approximation ratio of \(1 - \frac{1}{e} + \epsilon\) for any constant \(\epsilon > 0\), even when all buses have unit capacity and all OD pairs have unit demand.
\end{theorem}

\begin{proof}
Given an instance $(G = \{p_1, \dots, p_n\},~  F = \{G_1, G_2, \dots, G_L\},~k)$      of the max $k$-cover problem, we construct an equivalent instance of LPRC as follows. This instance will have $k$ buses, each with unit capacity, and a set of $L$ candidate lines.

The underlying network will have $2n$ vertices  $o_{i,1}, o_{i,2}$ for ${i\in [n]}$, and a set $D$  of $n$ OD pairs  $(o_{i,1}, o_{i,2})$  each having unit demand. The OD pair $(o_{i,1}, o_{i,2})$ corresponds to the element $p_i$ and will be identified with this element.   For any  set $G_j := \{p_{j_1}, \dots , p_{j_{|G_j|}}\}$ from $F$ with $j_1 \leq j_2 \leq \dots \leq j_{|G_j|}$, we create a candidate line $j$ that can only serve the OD pairs from $G_j$; see the illustrative example in Figure \ref{fig:hardness-line}. The line includes arcs from $o_{j_i,1}$ to $o_{j_i,2}$ for each $i\in\left[|G_j|\right]$, and arcs from $o_{j_i,2}$ and $o_{j_{i+1},1}$ for each $i\in \left[|G_j| -1\right]$, thereby establishing a directed path. Note that line $j$ can only serve the OD pairs from $G_j$ since for any OD pair $p_{i'}$ not in $G_j$, no path exists from $o_{i',1}$ to $o_{i',2}$ on line $j$. We  assign a reward of $1$ for each unit of demand served by any bus on any line. The illustrative example in Figure \ref{fig:hardness-line} shows a graph with $18$ vertices denoted by $\left\{o_{i,1},o_{i,2}\right\}_{i\in [9]}$. There are $9$ OD pairs represented as $\left\{p_1,p_2,\ldots, p_9\right\}$, where each pair $p_i=(o_{i,1}, o_{i,2})$ corresponds to travel from $o_{i,1}$ to $o_{i,2}$. Given the set $G_1:=\left\{p_1,p_3,p_9\right\}$, we construct line $1$ by sequentially connecting vertices $o_{1,1}$, $o_{1,2}$, $o_{3,1}$, $o_{3,2}$, $o_{9,1}$, and $o_{9,2}$. Note that this line can only serve OD pairs $p_1$, $p_3$, and $p_9$. 

The equivalence of this LPRC instance and the given max $k$-cover instance is straightforward. 
\end{proof}
%\vspace{-1.5em}
\begin{figure}[h]
    \centering
    \includegraphics[width = 10cm]{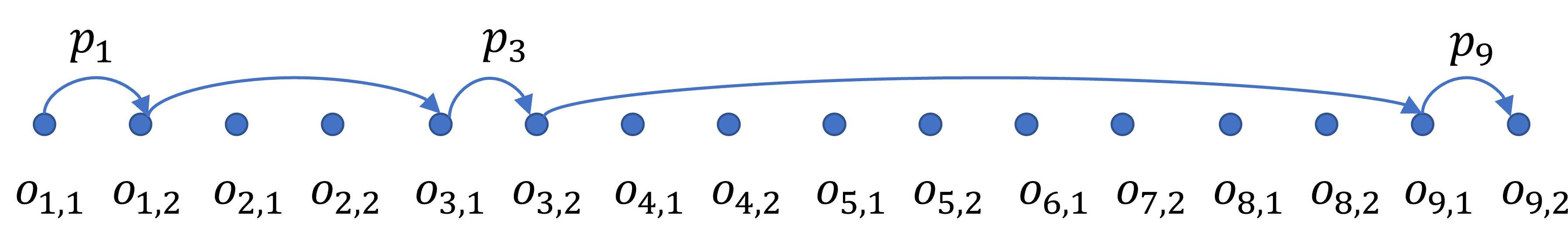}
    \caption{An illustration for the proof of Theorem \ref{thm:hardness} }
    \label{fig:hardness-line}
\end{figure}

\bigskip

\section{LPRC with Small Line Costs}\label{sec:low-cost}

Suppose a constant $\eta \in (0, \frac 12)$ is given. In this section, we introduce a randomized algorithm LC (which will also be called Algorithm LC($\eta$) to emphasize parameter $\eta$) tailored for LPRC instances where line costs are small. This will serve as a critical starting point for the theoretical analysis of the algorithms in Section \ref{sec:practical},  Appendix \ref{sec:approx-LPRC}, and \ref{sec:tolerance}. We formalize this condition with the following assumption. 

\begin{assumption}\label{assumption:small-costs}
 We assume that
$
\max_{b\in [M],l\in \mathcal{L}_b,k\in [K]}c_{b,l}^{(k)} \leq  \frac{\eta^3}{256K^3}.
$
\end{assumption}

\noindent {\bf Intuitive Overview:} %Under Assumption \ref{assumption:small-costs}, we primarily employ a randomized rounding mechanism similar to Algorithm \ref{alg:simple-alg}, as elaborated in Algorithm \ref{alg:low-costs-feasible}. The key distinction is that if the rounding process results in a resource constraint violation, no lines are assigned to buses, yielding a reward of $0$. Thanks to Assumption \ref{assumption:small-costs} and the application of Chernoff bounds \cite{motwani1995randomized}, we demonstrate that a solution from this rounding process is highly probable to meet resource constraints. The expected approximation ratio is therefore \( \left(1-\frac{1}{e}-\eta\right) \).
We use a randomized rounding mechanism similar to Algorithm \ref{alg:simple-alg} but with somewhat smaller selection probabilities, introducing some additional probability that some buses will not be utilized. With that, for each resource, the expected resource usage will be somewhat smaller than the limit; then, since costs are small, the probability of exceeding the resource limit will be small. If the randomized solution exceeds the limit for some resource, we discard the solution (do not utilize the buses at all); since the probability of exceeding the resource limit is small, this does not affect significantly the expected reward.

\begin{namedalg}{LC}\label{alg:low-costs-feasible}

~

   \noindent {\bf Input:}
        LP relaxation (\ref{LP-relaxation}) and an optimal solution $\left\{\widehat{\lambda}_{b,l}^{(t)}: b\in [M], l\in \mathcal{L}_b, t\in [\widehat{n}_{b,l}]\right\}$ as in Assumption~\ref{assumption:hat-n}.

\noindent {\bf Output: }A (randomized) solution to (\ref{eq:ILP-original}).

\noindent {\bf Steps: }
%\vspace{-0.5em}
\begin{enumerate}
%\itemsep -1mm 
    %\parsep 0mm 

    \item Let $q=\frac{4}{\eta}$ and $\epsilon = \frac{1}{q\cdot K}$. Observe that $\epsilon < \frac{\eta}{2}$.
For each $b\in [M]$, sample a tuple $(l,t)$ with the selection probability given by $(1-\epsilon)\widehat{\lambda}_{b,l}^{(t)}$. Denote the sampled tuple for each $b\in [M]$ as $(b,l_b,t_b)$. Note that the total of the selection probabilities for bus $b$ is less than 1. If no tuple is selected, the bus is not used (assigned to a ``dummy" line with zero costs and rewards).\label{alg-step:LC-sample}
    %\item If the sampled tuples incurred at one budget strictly larger than $1$, go back to Step \ref{small-cost-alg-step:sample}. 
    \item Same as Step \ref{simple-alg-step:assign} in Algorithm \ref{alg:simple-alg}. \label{alg-step:LC-rounding}
    \item If the solution meets the resource constraints, it becomes the final output. Otherwise, no lines are assigned to buses, and the reward generated is $0$. \hfill\label{alg-step:LC-discard}
\end{enumerate} 

\end{namedalg}

\noindent Recall that $\Gamma: = \sum_{b\in [M]}\sum_{l\in \mathcal{L}_b}\sum_{t \in [n_{b,l}]}\sum_{(i,j)\in D^{b,l}}v_{b,l,(i,j)}{\theta}_{b,l,(i,j)}^{(t)}\widehat{\lambda}_{b,l}^{(t)}$ is the optimal objective value of  the LP relaxation (\ref{LP-relaxation}). We first analyze the solution provided by Steps \ref{alg-step:LC-sample} and \ref{alg-step:LC-rounding} of Algorithm \ref{alg:low-costs-feasible}.

\begin{lemma}\label{lem:solution-alg-low-cost}
Let $\delta = \frac{\eta^3}{256K^3}$. The randomized solution returned by Steps \ref{alg-step:LC-sample} and \ref{alg-step:LC-rounding} of Algorithm \ref{alg:low-costs-feasible} has the following properties:
\begin{enumerate}[(1)]
%\itemsep -1mm 
    %\parsep 0mm 
    \item The expected reward is at least $\left(1-\frac{1}{e}-\epsilon\right)\Gamma$.
    \item Consider the event that the solution uses more than \( \psi \) units of at least one resource, where \( \psi \geq 1 \). The probability of this event occurring is bounded from above by \( K \cdot e^{-(\psi \epsilon^2)/(4 \delta)} \).
    \item If the solution uses no more than \(\psi+1\) units of each resource, the resulting reward is bounded from above by \((\psi+1) \cdot \Gamma\).

\end{enumerate}
\end{lemma}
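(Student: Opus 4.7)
The plan is to handle the three parts of the lemma separately, each by a fairly short reduction to previously established results.

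For part~(1), observe that the probability distribution used in Step~\ref{alg-step:LC-sample} of Algorithm~\ref{alg:low-costs-feasible} on bus $b$ is precisely the distribution $\{(1-\epsilon)\widehat{\lambda}_{b,l}^{(t)}\}$ with the residual mass $\epsilon$ routed to the dummy (zero-cost, zero-reward) line available in $\mathcal{L}_b$. Viewed this way, Steps~\ref{alg-step:LC-sample}--\ref{alg-step:LC-rounding} are exactly Algorithm~\ref{alg:simple-alg} applied to the scaled feasible LP solution $(1-\epsilon)\widehat{\lambda}$, whose objective value equals $(1-\epsilon)\Gamma$. Theorem~\ref{thm:main} then guarantees an expected reward of at least $(1-1/e)(1-\epsilon)\Gamma$, which is bounded below by $(1-1/e-\epsilon)\Gamma$ since $1-1/e<1$.

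For part~(2), fix a resource $k\in[K]$ and let $X^{(k)}=\sum_b c_{b,l_b}^{(k)}$, with the convention $c_{b,l_b}^{(k)}=0$ when no tuple is selected for bus $b$. The summands are independent across $b$, each bounded above by $\delta=\eta^3/(256K^3)$ by Assumption~\ref{assumption:small-costs}, and their expected sum satisfies
\[
\mathbb{E}\!\left[X^{(k)}\right]=(1-\epsilon)\sum_{b}\sum_{l,t}c_{b,l}^{(k)}\widehat{\lambda}_{b,l}^{(t)}\le 1-\epsilon,
\]
by the LP budget constraint \eqref{eq:LP-budgets}. Since $\psi\ge 1$, the relative deviation $\psi/\mathbb{E}[X^{(k)}]-1$ is at least $\epsilon/(1-\epsilon)$, so a multiplicative Chernoff bound applied to the rescaled variables $X^{(k)}/\delta$ yields $\Pr\!\left[X^{(k)}\ge\psi\right]\le e^{-\psi\epsilon^{2}/(4\delta)}$. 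A union bound over the $K$ resources gives the claimed bound $K\cdot e^{-\psi\epsilon^{2}/(4\delta)}$.

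For part~(3), suppose the realized integral solution $(\lambda,\xi)$ uses at most $\psi+1$ units of each resource. Scale all assignment variables by $1/(\psi+1)$ and redirect the slack mass $1-1/(\psi+1)$ on each bus to its dummy line (zero cost, zero reward); this yields a fractional solution $(\lambda',\xi')$ whose per-bus assignment masses sum to one, which therefore satisfies \eqref{eq:LP-one-line-each-bus}, \eqref{eq:LP-budgets} (the budget $\psi+1$ becomes $1$), and \eqref{eq:LP-respect-demand}. Hence $(\lambda',\xi')$ is feasible for the LP relaxation \eqref{LP-relaxation}, so its objective value is at most $\Gamma$, and the realized reward—being exactly $(\psi+1)$ times that of the scaled solution—is at most $(\psi+1)\Gamma$.

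The most delicate step is the Chernoff computation in part~(2): the summands are bounded by $\delta$ rather than $1$, so one must apply the multiplicative bound to the rescaled variables and then verify that the resulting exponent, after substituting the inequalities $\mathbb{E}[X^{(k)}]\le 1-\epsilon$ and $\psi\ge 1$, dominates $\psi\epsilon^{2}/(4\delta)$. The factor of $4$ (rather than $2$ or $3$) in the denominator provides enough slack to absorb the loose terms introduced by these substitutions.
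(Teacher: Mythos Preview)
Your proposal is correct and follows essentially the same route as the paper: part~(1) via the analysis underlying Theorem~\ref{thm:main} applied to the scaled solution, part~(2) via a rescaled Chernoff bound plus union bound over the $K$ resources, and part~(3) via an LP-feasibility argument (you use an explicit $1/(\psi+1)$ scaling, the paper phrases the same idea as concavity of the LP value in the right-hand sides). One small remark on part~(1): Theorem~\ref{thm:main} as stated is for the \emph{optimal} LP solution, so when you invoke it for the scaled feasible solution $(1-\epsilon)\widehat{\lambda}$ you are implicitly using that its proof nowhere relies on optimality and yields $(1-1/e)$ times the objective value of whatever feasible solution is fed in; it is worth saying this explicitly, which is effectively what the paper means by ``an analysis similar to''.
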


\begin{proof}
   Property (1) can be established by employing an analysis similar  to that used in the proof of Theorem \ref{thm:main}. The presence of the term \( \epsilon \) in the approximation ratio arises from the adjustment of the sampling probabilities by a scaling factor of \( (1 - \epsilon) \).

    Property (2) will be proven by applying Chernoff bounds (\cite{motwani1995randomized}). Particularly, let $Y_{b,l}$ be the random indicator variable denoting the event that bus $b$ is assigned to line $l$. Note that if $Y_{b,l}=0$ for all $l\in \mathcal{L}_b$, then it means that bus $b$ is not utilized in the randomized solution. For a given $k\in [K]$, we have that 
    \begin{align*}
        P\left(\sum_{b\in [M]}\sum_{l\in \mathcal{L}_b}Y_{b,l}\cdot c_{b,l}^{(k)}\geq \psi(1+\epsilon)(1-\epsilon)\right)
        &=P\left(\sum_{b\in [M]}\sum_{l\in \mathcal{L}_b}Y_{b,l}\cdot \frac{c_{b,l}^{(k)}}{\delta}\geq \psi(1+\epsilon)\frac{(1-\epsilon)}{\delta}\right)\leq e^{-(\psi\epsilon^2)/(4\delta)}
    \end{align*}
    The last inequality is due to  $\sum_{l\in \mathcal{L}_b}Y_{b,l}\cdot \frac{c_{b,l}^{(k)}}{\delta}\in [0,1]$ according to Assumption \ref{assumption:small-costs} , and $\sum_{b\in [M]}\sum_{l\in \mathcal{L}_b}\E \left[Y_{b,l}\right]\cdot c_{b,l}^{(k)}\leq (1-\epsilon)$, $\epsilon<\frac{1}{8}$ and Chernoff bounds. Now by taking a union bound over all $K$ resources, we can finish the proof.

    Property (3) can be verified using concavity of the optimal objective value of a maximization linear programming problem with respect to right-hand sides of resource limit constraints.
\end{proof}

% \noindent The result below can be  

% \begin{lemma}
%     If Algorithm \ref{alg:low-cost} generates a randomized solution where each resource is utilized no more than \(\psi+1\) units, the resulting objective value is bounded above by \((\psi+1) \cdot \text{OPT}_{\text{LP}}\).

% \end{lemma}

% We are now ready to formally introduce our algorithm for solving LPRC given by (\ref{eq:ILP-original}), subject to Assumption \ref{assumption:small-costs}, and to evaluate its expected approximation performance.

% \begin{namedalg}{LC}\label{alg:low-costs-feasible}
    
% ~

% ~

%    \noindent {\bf Input:}
%   Same as Algorithm \ref{alg:low-cost}.

% ~

% \noindent {\bf Output: }A (randomized) solution to (\ref{eq:ILP-original}) that respects all resource constraints.

% ~

% \noindent {\bf Steps: }
% \begin{enumerate}

%     \item Iteratively execute Algorithm \ref{alg:low-cost} until a feasible solution for (\ref{eq:ILP-original}) is obtained. \hfill{\Large $\triangleleft$}
% \end{enumerate} 
% \end{namedalg}

\noindent 

The following theorem establishes a guarantee for the expected approximation ratio of the randomized solution generated by Algorithm \ref{alg:low-costs-feasible}.

\begin{theorem}\label{thm:alg-small-costs}
The expected reward of the solution obtained by Algorithm \ref{alg:low-costs-feasible} is at least
 \(  \left(1 - \frac{1}{e} - \eta\right) \cdot \Gamma \) under Assumption \ref{assumption:small-costs}.

\end{theorem}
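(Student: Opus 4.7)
The plan is to decompose the expected reward of Algorithm \ref{alg:low-costs-feasible} into a ``pre-feasibility'' part, already controlled by Lemma \ref{lem:solution-alg-low-cost}(1), minus a ``discarded sample'' loss that I will show is negligible thanks to the small-cost Assumption \ref{assumption:small-costs}. Let $R$ be the (random) reward produced by Steps \ref{alg-step:LC-sample}--\ref{alg-step:LC-rounding} of Algorithm \ref{alg:low-costs-feasible}, and let $U=\max_{k\in[K]}U_k$ where $U_k=\sum_{b,l}Y_{b,l}\,c_{b,l}^{(k)}$ is the total usage of the $k$-th resource. The final reward is $R\cdot\mathbf{1}_{\{U\leq 1\}}$, so
\begin{equation*}
\E\!\left[R\cdot\mathbf{1}_{\{U\leq 1\}}\right] \;=\; \E[R] \;-\; \E\!\left[R\cdot\mathbf{1}_{\{U>1\}}\right].
\end{equation*}
Property (1) of Lemma \ref{lem:solution-alg-low-cost} gives $\E[R]\geq\left(1-\tfrac{1}{e}-\epsilon\right)\Gamma$, so it suffices to bound the discard loss by $(\eta-\epsilon)\Gamma$.

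To do so, I would partition $\{U>1\}$ into shells $B_\psi=\{\psi<U\leq\psi+1\}$ for integers $\psi\geq 1$. On $B_\psi$, Property (3) yields the deterministic envelope $R\leq(\psi+1)\Gamma$, and Property (2) gives $\pr(B_\psi)\leq\pr(U>\psi)\leq K\exp\!\left(-\psi\epsilon^2/(4\delta)\right)$ where $\delta=\eta^3/(256K^3)$. Summing,
\begin{equation*}
\E\!\left[R\cdot\mathbf{1}_{\{U>1\}}\right] \;\leq\; \Gamma K\sum_{\psi=1}^{\infty}(\psi+1)\exp\!\left(-\psi\epsilon^2/(4\delta)\right).
\end{equation*}

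With $\epsilon=\eta/(4K)$ from the algorithm and $\delta=\eta^3/(256K^3)$ from Assumption \ref{assumption:small-costs}, a direct substitution yields the crucial identity $\epsilon^2/(4\delta)=4K/\eta$, so the Chernoff exponent is a large constant of size $\Theta(K/\eta)$. Because of this, setting $a=e^{-4K/\eta}$, the closed form $\sum_{\psi\geq 1}(\psi+1)a^\psi=a(2-a)/(1-a)^2\leq 8a$ (valid since $a\leq e^{-8}<1/2$) reduces the discard loss to $O(K\Gamma\, e^{-4K/\eta})$. Combining with Property (1) gives
\begin{equation*}
\E\!\left[R\cdot\mathbf{1}_{\{U\leq 1\}}\right] \;\geq\; \left(1-\tfrac{1}{e}-\epsilon-8K\,e^{-4K/\eta}\right)\Gamma,
\end{equation*}
and the claim $\epsilon+8K\,e^{-4K/\eta}\leq\eta$ reduces to the one-line inequality $4K/\eta\geq\ln(32K/(3\eta))$, which holds for every $\eta\in(0,1/2)$ and $K\geq 1$ since the left side is linear in $K/\eta$ while the right is only logarithmic.

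The main obstacle is the joint use of Properties (2) and (3): the naive envelope $R\leq\Gamma$ holds only when $U\leq 1$, and without Property (3) one cannot control $R$ on the bad event without throwing away the small-cost structure; on the other hand, without the precise exponent in Property (2) the shell sum would not be summable against the weights $(\psi+1)$. The exponent in Assumption \ref{assumption:small-costs} is calibrated precisely so that $\epsilon^2/(4\delta)$ scales as $K/\eta$, making the tail contribution exponentially smaller than $\eta$ and absorbing it into the $-\eta$ term of the stated approximation ratio.
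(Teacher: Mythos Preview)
Your proposal is correct and follows essentially the same route as the paper: decompose the expected reward into the contribution from feasible samples plus the loss from discarded samples, partition the infeasible event into integer shells $\{\psi<U\le\psi+1\}$, bound the reward on each shell by $(\psi+1)\Gamma$ via Property~(3), bound the shell probability by $K e^{-\psi\epsilon^2/(4\delta)}$ via Property~(2), and observe that the calibrated choice of $\delta$ makes the exponent $\epsilon^2/(4\delta)=4K/\eta=Kq$. The only difference is cosmetic: the paper bounds the tail sum $\sum_{\psi\ge1}(\psi+1)K e^{-\psi Kq}$ by first dominating it by $2\sum_{t\ge1}t e^{-tq}$ and then using an integral estimate to get $2/q=\eta/2$, whereas you use the closed form $a(2-a)/(1-a)^2\le 8a$ with $a=e^{-4K/\eta}$ and finish with the elementary inequality $4K/\eta\ge\ln(32K/(3\eta))$.
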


\begin{proof}
Let $q=\frac{4}{\eta}$ and $\delta = \frac{1}{4q^3K^3} = \frac{\eta^3}{256K^3}$. Let \( \mathcal{B}_\psi \) be the event that  Steps \ref{alg-step:LC-sample} and \ref{alg-step:LC-rounding} of Algorithm \ref{alg:low-costs-feasible} return a randomized solution which utilizes at least \( \psi \) units for one or more resources, but does not exceed \( \psi+1 \) units for any resource. We define $P_\psi := P(\mathcal{B}_\psi)$ and
\[
E_\psi := \E\left[\text{Objective value of the randomized solution generated by  Step \ref{alg-step:LC-sample} and \ref{alg-step:LC-rounding} of Algorithm \ref{alg:low-costs-feasible}} ~\Big\vert~ \mathcal{B}_\psi\right].\]

\noindent With this notation, \( E_0\cdot P_0 \) is the expected objective value of the solution yielded by Algorithm \ref{alg:low-costs-feasible}.

By definitions and Lemma \ref{lem:solution-alg-low-cost}, we have that
\begin{align*}
    &\left(1-\frac{1}{e}-\epsilon\right)\Gamma
    \leq E_0\cdot P_0 + \sum_{\psi= 1}^{\infty}E_\psi\cdot P_\psi\\
    \leq &E_0\cdot P_0 + \sum_{\psi= 1}^{\infty}(\psi+1)\Gamma\cdot K\cdot e^{-\psi\epsilon^2/4\delta}=E_0\cdot P_0 + \Gamma\cdot \sum_{\psi= 1}^{\infty}(\psi+1)\cdot K\cdot e^{-\psi Kq}\\
    \leq &E_0\cdot P_0 + 2\Gamma\cdot \sum_{\psi= 1}^{\infty}\psi\cdot K\cdot e^{-\psi Kq}\leq E_0\cdot P_0 + 2\Gamma\cdot \sum_{t= 1}^{\infty}t\cdot e^{-t \cdot q}\\
    \leq&   E_0\cdot P_0 + \frac{2}{q}\cdot \Gamma = E_0\cdot P_0 + \frac{\eta}{2}\cdot \Gamma.
\end{align*}
The last inequality is due to the fact that $q=\frac{4}{\eta}> 4$, and \begin{align*}
    \sum_{t= 1}^{\infty}t\cdot e^{-t\cdot q} = e^{-q} + \sum_{t= 2}^{\infty}t\cdot e^{-t\cdot q}\leq  e^{-q} + \int_{1}^\infty (x+1)e^{-x\cdot q}dx= \frac{e^{-q}}{q}\cdot (2+\frac{1}{q} + q)\leq \frac{1}{q}.
\end{align*}
Since $\epsilon \leq \frac{\eta}{2}$, we have  \( E_0\cdot P_0 \geq \left(1 - \frac{1}{e} - \eta\right) \cdot \Gamma \). 
\end{proof}

\bigskip

\section{LPRC with General Line Costs}\label{sec:approx-LPRC}

In this section, we introduce an approximation algorithm  for general LPRC that achieves an expected approximation ratio of \( \left(\frac{1}{2} - \frac{1}{2e} - \eta'\right) \) for any given constant $\eta'\in (0,1/4)$. 

Before detailing the algorithm, we define some specific sets and functions for formalization.

\begin{definition}
Consider an instance \eqref{eq:ILP-original} of LPRC. Let \( \mathcal{A} \) be the set of all binary vectors \( \omega = (\omega_{b,l})_{b \in [M],l \in \mathcal{L}_b }\) such that the following conditions are met:
    $\sum_{b \in [M], l \in \mathcal{L}_b} c_{b,l}^{(k)} \cdot \omega_{b,l} \leq 1$, for all $k \in [K]$, and
    $\sum_{l \in \mathcal{L}_b} \omega_{b,l} = 1$, for all $b \in [M]$. Thus, each element in \( \mathcal{A} \) specifies an assignment of candidate lines to buses that complies with the resource constraints, and \( \mathcal{A} \) includes all such assignments.

We introduce a function \( f_{L}: \mathcal{A} \rightarrow \mathbb{Q}_+ \). For \( \omega \in \mathcal{A} \), \( f_{L}(\omega) \) denotes the optimal objective value for the LP relaxation \eqref{LP-relaxation} subject to the additional constraints that \( \lambda_{b,l}^{(t)} = 0 \) whenever \( \omega_{b,l} = 0 \). We use \( \text{LP}(\omega) \) to denote this restricted LP formulation. \( \text{LP}(\omega) \) is the LP relaxation of LPRC with the candidate line assignments fixed as defined by \( \omega \).
\end{definition}

\begin{definition}
   Given a constant $\delta\in (0,1)$, for each $b\in [M]$, define the set \( \mathcal{L}_b^\delta \) as follows:
    \[
    \mathcal{L}_b^{\delta} := \left\{ l\in \mathcal{L}_b : c_{b,l}^{(k)} \leq \delta \text{ for all } k \in [K] \right\}.
    \]
    This set contains the candidate lines of bus $b$ for which all associated costs are not greater than \( \delta \).  Define the set $ \mathcal{A}_\delta:= \left\{ \omega \in \mathcal{A} : \omega_{b,l} = 0 \text{ if } l \in \mathcal{L}_b^\delta \right\}.
    $
    This set includes all feasible line assignments limited to lines not in $\mathcal{L}_b^\delta$.

    %Define $f_B^{\delta}:\mathcal{A}_\delta\mapsto \Q_+$. For a given $\omega\in \mathcal{A}_\delta$, $f_B^{\delta}(\omega)$ represents the optimal objective value of LP relaxation (\ref{LP-relaxation}) subject to extra constraints \( \lambda_{b,l}^{(t)} = 0 \) whenever \( l\notin \mathcal{L}_b^\delta \) or $\sum_{l'\in \mathcal{L}_b}\omega_{b,l'}=1$.

    Let \( \text{LP}_\delta \) be the LP relaxation (\ref{LP-relaxation}) subject to extra constraints  \( \lambda_{b,l}^{(t)} = 0 \) whenever \( l\notin \mathcal{L}_b^\delta \), and let $\Gamma_\delta$ be its optimal objective value.  \( \text{LP}_\delta \) represents the LP relaxation of LPRC including only the lines with all costs not greater than \( \delta \).
\end{definition}

\noindent {\bf Algorithm Overview:} The core concept of the algorithm is to initially identify a candidate line assignment $\omega^*$ in \( \mathcal{A}_\delta \) that yields the best LP-relaxation objective value \( f_{L}(\omega^*) \), and then compare \( f_{L}(\omega^*) \) with the objective value \( \Gamma_\delta \) of \( \text{LP}_\delta \). Then, we apply the appropriate randomized rounding algorithm to the better (in terms of the optimal objective value) of these two LP relaxations, to obtain a final solution for LPRC. (Alternatively, we can apply the appropriate randomized rounding to the obtained optimal solution of  \( \text{LP}_\delta \) and to the obtained optimal solution of \( \text{LP}(\omega^*) \), and choose the best of the two obtained solutions.)

\begin{namedalg}{C}\label{alg:approx-all-costs}
    
~

   \noindent {\bf Input:}
        An instance of LPRC (\ref{eq:ILP-original}), and $\eta'\in \left(0,\frac{1}{4}\right)$. Set $\eta=2\eta'$.

\noindent {\bf Output: }A (randomized) solution to (\ref{eq:ILP-original}).

\noindent {\bf Steps: }
%\vspace{-0.5em}
\begin{enumerate}
%\itemsep -1mm 
    %\parsep 0mm 
    \item Define \( q = \frac{4}{\eta} \) and \( \delta = \frac{1}{4q^3K^3} = \frac{\eta^3}{256K^3} \). Solve $LP(\omega)$ for all $\omega \in \mathcal{A}_{\delta}$. 
     Let $\omega^* \in \argmax_{\omega\in \mathcal{A}_\delta}f_L(\mathbf{\omega}).$ 
     %For each \( \mathbf{\lambda} \in \mathcal{A}_\delta \), compute \( f_L(\mathbf{\omega}) \) and $f_B^\delta(\omega)$.
     \item If $\Gamma_\delta\geq f_L(\mathbf{\omega^*})$, then apply Algorithm \ref{alg:low-costs-feasible}($\eta$) with $\eta$ as chosen above to derive a solution to (\ref{eq:ILP-original}) based on $\text{LP}_\delta$.
    
    \item Otherwise,  apply Algorithm \ref{alg:simple-alg} to derive a solution to (\ref{eq:ILP-original}) based on $\text{LP}(\omega^*)$.\hfill
\end{enumerate}

\end{namedalg}

\noindent The following result guarantees that Algorithm \ref{alg:approx-all-costs} can be completed in polynomial time. Note that according to the assumptions made, $\frac{K}{ \delta}$ is a constant.

\begin{lemma}
    Let \( L := \max_{b \in [M]} |\mathcal{L}_b| \). Then, we can bound \( |\mathcal{A}_\delta| \) as follows:
    $|\mathcal{A}_\delta| \leq \frac{K}{ \delta}\cdot (M \cdot L)^{\frac{K}{ \delta}}.$
\end{lemma}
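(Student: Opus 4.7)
The approach is to bound the support size of each $\omega \in \mathcal{A}_\delta$ via a charging argument against the resource budgets, and then count by enumerating possible supports. First I would observe that for every pair $(b,l)$ with $\omega_{b,l}=1$, the restriction $\omega_{b,l}=0$ for $l \in \mathcal{L}_b^\delta$ defining $\mathcal{A}_\delta$ forces $l \notin \mathcal{L}_b^\delta$, which by definition of $\mathcal{L}_b^\delta$ means there exists an index $k(b,l) \in [K]$ with $c_{b,l}^{(k(b,l))} > \delta$. Assigning each support pair such an index partitions the support of $\omega$ into $K$ classes. For each class $k$, every member contributes strictly more than $\delta$ to $\sum_{b,l} c_{b,l}^{(k)} \omega_{b,l}$, which is at most $1$ by the resource-budget constraint built into $\mathcal{A}$, so the class has fewer than $1/\delta$ elements. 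Summing over $k \in [K]$, the support of $\omega$ has size less than $K/\delta$.

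Next I would enumerate. Each $\omega \in \mathcal{A}_\delta$ is a $\{0,1\}$-vector on the index set $\{(b,l): b\in [M],\, l\in \mathcal{L}_b\}$, which has cardinality at most $ML$. Since $\omega$ is determined by its support and the support has size at most $\lfloor K/\delta\rfloor$, I would bound
\[
|\mathcal{A}_\delta|\;\leq\; \sum_{i=0}^{\lfloor K/\delta \rfloor}\binom{ML}{i}\;\leq\; \sum_{i=0}^{\lfloor K/\delta \rfloor}(ML)^i\;\leq\; \frac{K}{\delta}\,(ML)^{K/\delta},
\]
where the last step invokes the geometric-series estimate $\sum_{i=0}^n x^i\leq 2x^n$ for $x\geq 2$ (with the trivial cases $ML\in\{0,1\}$ handled separately) together with the fact that $K/\delta \geq 2$ for the parameter regime of interest, since $\delta = \eta^3/(256K^3)$ is very small and $K\geq 1$.

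The step I expect to require the most care is the bookkeeping around the ``dummy'' zero-cost line that every $\mathcal{L}_b$ contains by the convention of Section~\ref{sec:original}: this dummy line has all costs equal to zero and therefore lies in $\mathcal{L}_b^\delta$, so it is automatically excluded from the support of any $\omega \in \mathcal{A}_\delta$. This ensures that every nonzero entry of $\omega$ is a genuinely high-cost bus-line assignment with some cost strictly exceeding $\delta$, which is precisely what makes the charging argument valid. Once this observation is in place, the support bound and the elementary subset count combine to yield the stated inequality.
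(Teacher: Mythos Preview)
Your proposal is correct and follows essentially the same approach as the paper: bound the support of any $\omega \in \mathcal{A}_\delta$ by $K/\delta$ via a pigeonhole/charging argument against the $K$ unit-budget resource constraints, then count the possible supports among the at most $ML$ bus--line pairs. The paper's proof is a two-sentence version of exactly this idea; your write-up simply makes the charging step and the subset enumeration explicit, and handles the final inequality and the dummy-line bookkeeping with more care.
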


\begin{proof}
    By  the pigeonhole principle, at most \( \frac{K}{\delta} \) lines with at least one type of cost larger than $\delta$ can be utilized while satisfying all resource constraints. Since each bus can choose from a maximum of \( L \) candidate lines, the bound on \( |\mathcal{A}_\delta| \) is established.
\end{proof}

\noindent The next lemma is needed for performance analysis.

\begin{lemma}\label{lem:max-geq-half} 
    $$\max\left\{f_L(\mathbf{\omega^*}), \Gamma_\delta\right\}\geq \frac{1}{2}\text{OPT}_{\text{IP}},$$
where $\text{OPT}_{\text{IP}}$ is the optimal objective value of LPRC.
\end{lemma}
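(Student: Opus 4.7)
The plan is to prove the bound by a case analysis on the optimal integer solution, splitting its reward according to whether each bus is assigned to a low-cost line (in $\mathcal{L}_b^\delta$) or a high-cost line (in $\mathcal{L}_b\setminus\mathcal{L}_b^\delta$), and showing that one of the two LP relaxations evaluated inside Algorithm~\ref{alg:approx-all-costs} captures at least half of the integer optimum.

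First, fix an optimal integer solution $(\lambda^*,\xi^*)$ of LPRC with value $\text{OPT}_{\text{IP}}$, and let $\omega^{\text{OPT}}\in\mathcal{A}$ be its associated line assignment. Partition the buses as $B_1:=\{b\in[M]:\omega^{\text{OPT}}_{b,l}=1 \text{ for some } l\in\mathcal{L}_b^{\delta}\}$ and $B_2:=[M]\setminus B_1$, and let $V_j$ denote the total reward contributed by buses in $B_j$ under $(\lambda^*,\xi^*)$. Then $V_1+V_2=\text{OPT}_{\text{IP}}$, so that $\max\{V_1,V_2\}\geq \tfrac{1}{2}\text{OPT}_{\text{IP}}$.

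In the case $V_1\geq \tfrac{1}{2}\text{OPT}_{\text{IP}}$, I would construct an integer solution by keeping the OPT assignment and demand allocation for every bus in $B_1$ and reassigning every bus in $B_2$ to its dummy zero-cost zero-reward line (which lies in $\mathcal{L}_b^\delta$) with no demand served. The resulting assignment uses only lines in $\mathcal{L}_b^\delta$ and therefore is a feasible integer point of $\text{LP}_\delta$ with objective value exactly $V_1$, so $\Gamma_\delta \geq V_1\geq\tfrac{1}{2}\text{OPT}_{\text{IP}}$. In the case $V_2\geq\tfrac{1}{2}\text{OPT}_{\text{IP}}$, I would exhibit $\widetilde{\omega}\in\mathcal{A}_\delta$ whose restriction to $B_2$ matches $\omega^{\text{OPT}}$ (the OPT lines of these buses are already outside $\mathcal{L}_b^\delta$). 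The OPT solution restricted to $B_2$ is then a feasible point of $LP(\widetilde{\omega})$ with value $V_2$, which yields $f_L(\omega^*)\geq f_L(\widetilde{\omega})\geq V_2\geq\tfrac{1}{2}\text{OPT}_{\text{IP}}$. Combining the two cases proves the lemma.

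The step I expect to be the main obstacle is completing $\widetilde{\omega}$ on $B_1$ in the second case, since the definition of $\mathcal{A}_\delta$ nominally requires every bus, including those in $B_1$, to select a line outside $\mathcal{L}_b^\delta$ while respecting the resource budget. I plan to handle this by exploiting the slack in each resource constraint that is freed when the low-cost usage of $B_1$ in OPT is removed, together with the pigeonhole bound $|B_2|\leq K/\delta$ underlying the polynomiality of $\mathcal{A}_\delta$, so that any fixed high-cost completion on $B_1$ can be chosen without violating feasibility; because the $B_1$ completions serve zero demand, they do not affect the lower bound on $f_L(\widetilde{\omega})$. Edge cases in which no such completion exists for some $b\in B_1$ force $\mathcal{A}_\delta=\emptyset$ and reduce the analysis to Case~1.
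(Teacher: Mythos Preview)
Your overall strategy---splitting the optimal solution's reward into the contributions $V_1$ from buses on low-cost lines and $V_2$ from buses on high-cost lines, then bounding these by $\Gamma_\delta$ and $f_L(\omega^*)$ respectively---is exactly the paper's. The paper packages it as a single decomposition $\omega=\omega'+\omega''$ (high-cost part plus low-cost part) together with the inequality $\Gamma_\delta+f_L(\omega')\ge f_L(\omega)$, which avoids the explicit case split but is the same argument.

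The gap is your Case~2 completion. The plan to fill in $\widetilde\omega$ on $B_1$ using freed slack cannot work: removing the low-cost lines of $B_1$ from the optimal solution frees at most $|B_1|\cdot\delta$ units in any single resource, while assigning each bus in $B_1$ to a high-cost line consumes more than $\delta$ in some resource; since $|B_1|$ can be as large as $M$, such a completion will in general violate the budget. The fallback ``failure forces $\mathcal{A}_\delta=\emptyset$ and reduces to Case~1'' is also incorrect, because $\mathcal{A}_\delta$ may be nonempty yet contain no extension of $\omega^{\text{OPT}}|_{B_2}$, and even when $\mathcal{A}_\delta=\emptyset$ you have only established $\Gamma_\delta\ge V_1$, which need not reach $\tfrac12\text{OPT}_{\text{IP}}$ when $V_2>V_1$. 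The fix the paper uses (implicitly, via its decomposition $\omega=\omega'+\omega''$) is not to complete $\widetilde\omega$ on $B_1$ at all: one should read $\mathcal{A}_\delta$ with $\sum_{l}\omega_{b,l}\le 1$, so that the partial assignment agreeing with $\omega^{\text{OPT}}$ on $B_2$ and leaving $B_1$ unassigned already lies in $\mathcal{A}_\delta$. Since the $B_1$ buses contribute nothing to the high-cost reward, $f_L(\widetilde\omega)\ge V_2$ is then immediate and no resource-slack gymnastics are needed.
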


\begin{proof}
    Define the set $\overline{\mathcal{A}}_\delta:
:= \left\{ \omega \in \mathcal{A} : \omega_{b,l} = 0 \text{ if } l \notin \mathcal{L}_b^\delta \right\}.$
Given the preceding definitions, for each \( \omega \in \mathcal{A} \), vectors \( \omega' \in \mathcal{A}_\delta \) and \( \omega'' \in \overline{\mathcal{A}}_\delta \) exist such that \( \omega' + \omega'' = \omega \). Consequently, the inequality \( \Gamma_\delta + f_L(\omega') \geq f_L(\omega) \) holds. This follows from the observation that a feasible solution for \( \text{LP}(\omega) \) can be decomposed into distinct solutions for \( \text{LP}_\delta \) and \( \text{LP}(\omega') \).
 Thus we have:
        $\max\left\{f_L(\mathbf{\omega^*}), \Gamma_\delta\right\}\geq \frac{1}{2}\max_{\omega\in \mathcal{A}_\delta}\left(\Gamma_\delta+f_L(\omega)\right)\geq \frac{1}{2}\max_{\omega\in \mathcal{A}}f_L(\omega)\geq \frac{1}{2}\text{OPT}_{\text{IP}}.$
\end{proof}

\noindent Combining Theorem \ref{thm:main}, Theorem \ref{thm:alg-small-costs} and Lemma \ref{lem:max-geq-half} immediately leads to the following result.

\begin{theorem}
    Given a constant $\eta'\in (0,1/4)$, the expected approximation ratio of the solution obtained by Algorithm \ref{alg:approx-all-costs} is at least $\frac{1}{2}-\frac{1}{2e}-\eta'$.
\end{theorem}

\bigskip

\section{LPRC with $\tau-$Tolerance in Resource Constraints}\label{sec:tolerance}

In this section, we present a randomized approximation algorithm with expected approximation ratio \(\left(1-\frac{1}{e}-\eta\right)\) while the solution utilizes no more than \(1+\tau\) units of each resource type, for any given constants \(\eta, \tau \in \left(0, \frac{1}{2}\right)\). This approximation ratio is \emph{nearly tight}. Specifically, when all costs are set to \(0\), the hardness result from Theorem \ref{thm:hardness} can be directly applied here.

%In our algorithm, we partition the set of lines into two disjoint subsets, one set containing lines with costs smaller than a given threshold, similar to the methodology described in Section \ref{sec:approx-LPRC}. Within one of these subsets, we resort to enumeration. For each enumerated scenario, Algorithm \ref{alg:low-costs-feasible} is applied to a modified form of the original LP relaxation (\ref{LP-relaxation}). Subsequently, the solution with the most favorable objective value is selected as the final solution.

\noindent {\bf Algorithm Overview:} In the algorithm, we enumerate all feasible line assignments restricted to the lines with at least one type of cost exceeding a predefined threshold; such lines will be called \emph{high-cost lines}, and such feasible assignments will be called \emph{high-cost assignments}. As shown in Section \ref{sec:approx-LPRC}, the number of high-cost assignments is polynomial. For each enumerated case of a high-cost assignment, we adjust the remaining resources by removing those consumed by the assigned lines, and then augment each type of the residual resource by  $\tau$. After scaling, the adjusted residual resources can then be used for the lines with all costs below the threshold (\emph{low-cost lines}). Then, a modified version of the LP relaxation (\ref{LP-relaxation}) is solved for each enumerated case where assignment of large-cost lines is fixed according to the case, and the case with the largest objective value of the modified LP relaxation is selected. Then, Algorithm  \ref{alg:low-costs-feasible} is applied to the given optimal solution of the modified LP relaxation for the selected case, and the obtained solution is presented as the final output.

\noindent Below, we define the modified LP relaxation. The notation from the previous section applies.

\begin{definition}
    Given \( \delta \leq \tau \in (0, \frac{1}{2}) \), for each feasible vector \( \omega \in \mathcal{A}_\delta \), let \( \text{LP}_{\delta,\tau}(\omega) \) denote the modified LP relaxation (\ref{LP-relaxation}). In \( \text{LP}_{\delta,\tau}(\omega) \), the following adjustments are made:
    \begin{enumerate}
        %\itemsep -1mm
    %\parsep 0mm
        \item Add constraints \( \lambda_{b,l}^{(t)} = 0 \) whenever \( l \notin \mathcal{L}_b^\delta \) and $\omega_{b,l}=0$.
        \item Add constraints \( \lambda_{b,l}^{(t)} = 0 \) whenever \( \sum_{l'\in\mathcal{L}_b}\omega_{b,l'}= 1\) and $\omega_{b,l}=0$.
        \item Replace \( c_{b,l}^{(k)}\) with \( \widehat{c}_{b,l}^{(k)} \), where  \( \widehat{c}_{b,l}^{(k)} = 0\) if \(  l \notin \mathcal{L}_b^\delta  \), and \( \widehat{c}_{b,l}^{(k)} = \frac{{c}_{b,l}^{(k)}}{1 - \sum_{b,l} c_{b,l}^{(k)}\omega_{b,l}+ \tau}\) otherwise.
        %\item For all $k\in [K]$, modify the \( k \)-th budget to \( 1 - \sum_{(b,l) \in \mathbf{\lambda}} c_{b,l}^{(k)} + \tau \).
        \item Define \( f_L^{{\delta,\tau}}: \mathcal{A}_\delta \rightarrow \R_+ \). For each \( \omega \in \mathcal{A}_\delta \), let \( f_L^{\delta,\tau}(\mathbf{\omega}) \) be the optimal objective value of \( \text{LP}_{\delta,\tau}(\mathbf{\omega}) \).
    \end{enumerate}
    
\end{definition}

\noindent Step 1 filters out high-cost lines not in \( \omega \). Step 2 ensures buses assigned to lines in \( \omega \) cannot be reused. Step 3 sets costs to \( 0 \) for high-cost lines (\( \notin \mathcal{L}_b^\delta \)) and scales costs for low-cost lines to reflect resource usage and the adjusted limit \( \tau \), given all resource constraints in the LP relaxation (\ref{LP-relaxation}) have a right-hand side of 1.

Since we apply scaling of the costs of low-cost lines, we need to make sure that after the scaling the conditions of applicability of 
Algorithm \ref{alg:low-costs-feasible} are still met. This is done by a proper choice of $\delta$.

\begin{lemma}\label{lem:residual-small-cost}
    Given $\tau, \eta\in (0,\frac{1}{2})$, let  $q = \frac{4}{\eta}$ and \( \delta = \frac{1}{4q^3K^3}\cdot \tau \). For any vector \( \omega \in \mathcal{A}_\delta \) and its associated \( \text{LP}_{\delta,\tau}(\mathbf{\omega}) \), the parameter \( \left(\widehat{c}_{b,l}^{(k)}\right)_{b\in[M],l\in\mathcal{L}_b,k\in[K]} \) satisfies the criterion specified in Assumption \ref{assumption:small-costs}. Specifically, we have:
    %\vspace{-0.5em}
\[
\max_{b \in [M], l \in \mathcal{L}_b, k \in [K]} \widehat{c}_{b,l}^{(k)} \leq \frac{\eta^3}{256 K^3}.
\]

\noindent Therefore, we can apply Algorithm \ref{alg:low-costs-feasible} to $\text{LP}_{\delta,\tau}(\mathbf{\omega})$ with costs set as \( \left(\widehat{c}_{b,l}^{(k)}\right)_{b\in[M],l\in\mathcal{L}_b,k\in[K]} \).
\end{lemma}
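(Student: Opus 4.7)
The plan is to do a direct case analysis on the two pieces of the definition of $\widehat{c}_{b,l}^{(k)}$, using the feasibility of $\omega \in \mathcal{A}_\delta \subseteq \mathcal{A}$ to control the denominator that appears in the scaling.

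First, I would unpack the target bound. With $q = 4/\eta$, a quick calculation shows $\frac{1}{4q^3 K^3} = \frac{\eta^3}{256 K^3}$, so the definition $\delta = \frac{1}{4q^3K^3}\cdot \tau$ can be rewritten as $\delta = \frac{\eta^3\tau}{256 K^3}$. Therefore to verify the conclusion of the lemma it suffices to show $\widehat{c}_{b,l}^{(k)} \leq \delta/\tau$ for every $b,l,k$.

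Next I would split on the two definitional cases. For $l \notin \mathcal{L}_b^\delta$, by definition $\widehat{c}_{b,l}^{(k)} = 0$ and the bound holds trivially. For $l \in \mathcal{L}_b^\delta$, the definition of $\mathcal{L}_b^\delta$ gives $c_{b,l}^{(k)} \leq \delta$ in the numerator. The key observation for the denominator is that $\omega \in \mathcal{A}$ satisfies the resource feasibility condition $\sum_{b,l} c_{b,l}^{(k)}\omega_{b,l} \leq 1$, so
\[
1 - \sum_{b,l} c_{b,l}^{(k)}\omega_{b,l} + \tau \;\geq\; \tau.
\]
Dividing, we obtain $\widehat{c}_{b,l}^{(k)} \leq \delta/\tau = \frac{\eta^3}{256K^3}$, which is exactly the bound required by Assumption \ref{assumption:small-costs}.

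Finally, I would note that since Algorithm \ref{alg:low-costs-feasible} only depends on the LP relaxation and the magnitude of the resource costs (via Assumption \ref{assumption:small-costs}), the verification above legitimizes applying Algorithm \ref{alg:low-costs-feasible} to $\mathrm{LP}_{\delta,\tau}(\omega)$ with costs $\widehat{c}_{b,l}^{(k)}$. There is no real obstacle here; the only subtlety is remembering to use $\omega \in \mathcal{A}$ (not a solution of $\mathrm{LP}_{\delta,\tau}(\omega)$) to lower-bound the denominator, and to note that the additive $+\tau$ alone already suffices, which is why the choice $\delta = \tau / (4q^3K^3)$ was made precisely to match $\frac{\eta^3}{256K^3}$ after cancellation.
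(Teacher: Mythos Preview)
Your proposal is correct and follows essentially the same argument as the paper: bound the numerator by $\delta$ using $l\in\mathcal{L}_b^\delta$, bound the denominator from below by $\tau$ using the resource feasibility of $\omega\in\mathcal{A}$, and then observe $\delta/\tau = \frac{1}{4q^3K^3} = \frac{\eta^3}{256K^3}$. The only cosmetic difference is that you make the trivial case $l\notin\mathcal{L}_b^\delta$ explicit, whereas the paper leaves it implicit.
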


\begin{proof}
    The result is established through the following inequalities:

$~~~~~~~~~~~~~~~~~~~~~~~~~~~~
    \widehat{c}_{b,l}^{(k)} \leq \frac{\delta}{1 - \sum_{b,l} c_{b,l}^{(k)}\omega_{b,l}+ \tau}  \leq \frac{\delta}{\tau}  \leq \frac{1}{4q^3 K^3}  = \frac{\eta^3}{256 K^3}.
$
\end{proof}

\noindent We are now ready to outline the algorithm, guided by Lemma \ref{lem:residual-small-cost} in choosing the line cost threshold based on \( \eta \) and \( \tau \).

\begin{namedalg}{C-Tol}\label{alg:tolerance}

~

   \noindent {\bf Input:}
        An instance of LPRC (\ref{eq:ILP-original}), and $\tau,\eta\in \left(0,\frac{1}{2}\right)$.

\noindent {\bf Output: }A (randomized) solution to (\ref{eq:ILP-original}).

\noindent {\bf Steps: }
%\vspace{-0.5em}
\begin{enumerate}
    %\itemsep 0mm
    %\parsep 0mm
    \item Define $q = \frac{4}{\eta}$ and \( \delta = \frac{1}{4q^3K^3}\cdot \tau \).
    \item  
     For each \( \omega \in \mathcal{A}_\delta \), compute \( f_L^{{\delta,\tau}}(\omega) \).
      Let $\omega^*:=\argmax_{\omega\in\mathcal{A}_\delta}f_L^{{\delta,\tau}}(\omega)$.
    
    \item Apply Algorithm \ref{alg:low-costs-feasible} to $\text{LP}_{\delta,\tau}(\omega^*)$ with costs set as \( \left(\widehat{c}_{b,l}^{(k)}\right)_{b\in[M],l\in\mathcal{L}_b,k\in[K]} \). To clarify, the final solution adheres to the following conditions:
\(
\sum_{b \in [M], l \in \mathcal{L}_b} \widehat{c}_{b,l}^{(k)} \cdot \widetilde{\lambda}_{b,l} \leq 1.
\)
Here, \(\left( \widetilde{\lambda}_{b,l} \right)_{b \in [M], l \in \mathcal{L}_b}\) is a binary vector that indicates whether bus $b$ operates on line $l$ in the final solution.\hfill

\end{enumerate}

\end{namedalg}

\noindent The following results provide theoretical performance guarantees for Algorithm \ref{alg:tolerance}.

\begin{theorem}\label{thm:C-Tol}
Algorithm \ref{alg:tolerance} has an expected approximation ratio of at least \( \left(1-\frac{1}{e}-\eta\right) \) while using at most \( 1+\tau \) units of each resource.
\end{theorem}

\subsection{Proof of Theorem \ref{thm:C-Tol}}\label{app-sec:thm:C-Tol}

We firstly need to prove the following Lemma, which indicates the lower bound of $\max_{\omega\in\mathcal{A}_\delta}f_L^{{\delta,\tau}}(\omega)$. 

\begin{lemma}\label{lem:max-geq-opt}
     Given an instance of LPRC (\ref{eq:ILP-original}) and $\tau,\eta\in (0,\frac{1}{2})$, let $\text{OPT}_{\text{IP}}$ be the optimal objective value for the instance. With the same notations as in Algorithm \ref{alg:tolerance}, we have that $\max_{\omega\in\mathcal{A}_\delta}f_L^{{\delta,\tau}}(\omega)\geq \text{OPT}_{\text{IP}}.$
\end{lemma}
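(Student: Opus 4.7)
The plan is to show that an optimal integer solution of LPRC can itself be re-interpreted as a feasible solution of $\text{LP}_{\delta,\tau}(\omega^*)$ for a suitably chosen $\omega^* \in \mathcal{A}_\delta$, with identical objective value. This immediately gives $f_L^{\delta,\tau}(\omega^*) \geq \text{OPT}_{\text{IP}}$, and the lemma follows by maximizing over $\mathcal{A}_\delta$.

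First, I would take an optimal integer solution $(\lambda^*,\xi^*)$ of (\ref{eq:ILP-original}) achieving $\text{OPT}_{\text{IP}}$, and define $\omega^*$ by $\omega^*_{b,l} := \lambda^*_{b,l}$ whenever $l \in \mathcal{L}_b \setminus \mathcal{L}_b^\delta$ and $\omega^*_{b,l} := 0$ otherwise. That is, $\omega^*$ records exactly the high-cost line assignments made by the integer optimum, while the low-cost choices remain to be re-expressed as LP variables. Since each bus is assigned to at most one line in $\lambda^*$, it uses at most one high-cost line in $\omega^*$, and for every resource $k$ the budget $\sum_{b,l} c_{b,l}^{(k)} \omega^*_{b,l} \leq \sum_{b,l} c_{b,l}^{(k)} \lambda^*_{b,l} \leq 1$ is inherited, so $\omega^* \in \mathcal{A}_\delta$.

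Next, I would check that $(\lambda^*,\xi^*)$, once expressed in the extreme-point form $\lambda^{(t^*)}_{b,l} = \lambda^*_{b,l}$ for the unique $t^*$ with $\theta^{(t^*)}_{b,l} = \xi^*_{b,l}$, is feasible for $\text{LP}_{\delta,\tau}(\omega^*)$. The two variable-fixing constraints in that relaxation are automatic: unselected high-cost lines receive zero weight in $\lambda^*$ by construction, and any bus whose high-cost line is recorded in $\omega^*$ is already committed and carries zero weight on every other line. The substantive step is the scaled resource check. Splitting the original constraint into high-cost and low-cost contributions yields $\sum_{b,\,l \in \mathcal{L}_b^\delta} c_{b,l}^{(k)} \lambda^*_{b,l} \leq 1 - \sum_{b,l} c_{b,l}^{(k)} \omega^*_{b,l}$. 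Since $\widehat{c}_{b,l}^{(k)} = 0$ on high-cost lines and $\widehat{c}_{b,l}^{(k)} = c_{b,l}^{(k)}/(1 - \sum_{b,l} c_{b,l}^{(k)} \omega^*_{b,l} + \tau)$ on low-cost lines, the scaled usage is
\[
\sum_{b,\,l \in \mathcal{L}_b^\delta} \widehat{c}_{b,l}^{(k)} \lambda^*_{b,l} \;\leq\; \frac{1 - \sum_{b,l} c_{b,l}^{(k)} \omega^*_{b,l}}{1 - \sum_{b,l} c_{b,l}^{(k)} \omega^*_{b,l} + \tau} \;\leq\; 1,
\]
so the modified resource constraint holds. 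Since rewards are unaffected by cost scaling, the LP objective value of this lifted solution equals $\text{OPT}_{\text{IP}}$.

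The main obstacle is the bookkeeping behind the scaled resource check: one must decompose $\lambda^*$ cleanly into high-cost and low-cost parts, identify $\sum c^{(k)} \omega^*$ with the former, and realize that $1 - \sum c^{(k)} \omega^*$ is precisely the raw budget available to the latter so that division by $1 - \sum c^{(k)} \omega^* + \tau$ keeps the result below one. Everything else is essentially routine, and combining the observations delivers $f_L^{\delta,\tau}(\omega^*) \geq \text{OPT}_{\text{IP}}$, which proves the lemma.
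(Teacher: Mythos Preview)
Your proof is correct and follows essentially the same approach as the paper: pick an optimal integer solution, let $\omega^*$ record its high-cost line assignments, observe $\omega^*\in\mathcal{A}_\delta$, and then verify that the integer solution itself is feasible for $\text{LP}_{\delta,\tau}(\omega^*)$ with objective value $\text{OPT}_{\text{IP}}$. In fact you spell out the scaled-resource verification explicitly, whereas the paper simply asserts that ``it can be checked'' that the constructed solution is feasible for $\text{LP}_{\delta,\tau}(\widetilde{\omega})$.
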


\begin{proof}
    For an optimal solution to (\ref{eq:ILP-original}), let $\widehat{\lambda}_{b,l} = 1$ if bus $b$ operates on line $l$, and $0$ otherwise. Let $\widehat{\xi}_{b,l,(i,j)}$ be the number of passengers from OD pair $(i,j)$ assigned to bus $b$ on line $l$ in the solution. Let $\widetilde{\omega}_{b,l} = \widehat{\lambda}_{b,l}$ if $l\notin \mathcal{L}_b^\delta$, and $0$ otherwise. Then according to the definition of $\mathcal{A}_\delta$, we have that $\mathbf{\widetilde{\omega}}:=\left(\widetilde{\omega}_{b,l}\right)_{b\in [M], l\in \mathcal{L}_b}\in \mathcal{A}_\delta$. Without loss of generality, assume $\theta_{b,l,(i,j)}^{(1)} = \widehat{\xi}_{b,l,(i,j)}$,  $\lambda_{b,l}^{(1)} = \widehat{\lambda}_{b,l}$ and $\lambda_{b,l}^{(t)} = 0$ for all $t\geq 2$, $(i,j)\in D$, $b\in [M]$ and $l\in \mathcal{L}_b$. Then it can be checked that this defines a feasible solution to $\text{LP}_{\delta,\tau}\left(\mathbf{\widetilde{\omega}}\right)$, and its objective value is $\text{OPT}_{\text{IP}}$. Therefore, we have that $\text{OPT}_{\text{IP}}\leq f_L^{{\delta,\tau}}\left(\mathbf{\widetilde{\omega}}\right)\leq \max_{\omega\in\mathcal{A}_\delta}f_L^{{\delta,\tau}}(\omega)$.
\end{proof}

\begin{proof}[Proof of Theorem \ref{thm:C-Tol}]
Due to Lemma~\ref{lem:residual-small-cost}, we can directly apply Theorem~\ref{thm:alg-small-costs} to demonstrate that the expected objective value of the solution returned by Algorithm~\ref{alg:tolerance} is at least \(\left(1-\frac{1}{e}-\eta\right)f_L^{{\delta,\tau}}(\omega^*)\). This, in conjunction with Lemma~\ref{lem:max-geq-opt}, establishes the approximation ratio.

Let $\left(\widetilde{\lambda}_{b,l}\right)_{b\in [M], l\in \mathcal{L}_b}$ be the binary vector that indicates whether bus $b$ operates on line $l$ in the solution returned by Algorithm \ref{alg:tolerance}. 
By the definition of \(\text{LP}_{\delta,\tau}(\mathbf{\omega^*})\) and Algorithm~\ref{alg:low-costs-feasible}, we observe that \(\widetilde{\lambda}_{b,l} = 0\) whenever \(l\notin \mathcal{L}_b^\delta\) and $\omega_{b,l}^*=0$, and \(\widetilde{\lambda}_{b,l} \leq 1\) otherwise. This leads to
\begin{align*}
    \sum_{b\in [M], l\in \mathcal{L}_b}c_{b,l}^{(k)}\cdot \widetilde{\lambda}_{b,l}\leq&  \sum_{b,l} c_{b,l}^{(k)}\omega^*_{b,l} + \sum_{b\in [M], l\in \mathcal{L}_b^\delta} c_{b,l}^{(k)}\cdot \widetilde{\lambda}_{b,l}\\
    =&  \sum_{b,l} c_{b,l}^{(k)}\omega^*_{b,l} + \left(1 - \sum_{b,l} c_{b,l}^{(k)}\omega^*_{b,l}+ \tau\right)\cdot \sum_{b\in [M], l\in \mathcal{L}_b^\delta} \frac{c_{b,l}^{(k)}}{1 - \sum_{b,l} c_{b,l}^{(k)}\omega^*_{b,l}+ \tau}\cdot \widetilde{\lambda}_{b,l}\\
    =&  \sum_{b,l} c_{b,l}^{(k)}\omega^*_{b,l} + \left(1 - \sum_{b,l} c_{b,l}^{(k)}\omega^*_{b,l}+ \tau\right)\cdot \sum_{b\in [M], l\in \mathcal{L}_b^\delta} \widehat{c}_{b,l}^{(k)}\cdot \widetilde{\lambda}_{b,l}\leq 1+\tau
\end{align*}
Hence, the upper bound on the utilization of the \(k\)-th resource is \(1 + \tau\).
\end{proof}

\bigskip

\section{Proof of Proposition \ref{prop:alg-R-approx}}\label{app-sec:prop:alg-R-approx-proof}

\begin{proof}
The proof uses an analysis similar to that in Section \ref{sec:low-cost}. It is important to note that the approximation guarantee here is derived only from the randomized solutions obtained by Step \ref{alg-step:LC-M-rounding} of Algorithm \ref{alg:low-cost-modified} that meet all resource constraints, ignoring the rewards of generated infeasible solutions (as for Algorithm \ref{alg:low-costs-feasible} in Appendix \ref{sec:low-cost}). Subsequent steps of Algorithm \ref{alg:low-cost-modified} can only improve upon this guarantee.

Let $Y_{b,l}$ be the random indicator variable denoting the event that bus $b$ is assigned to line $l$ in Step \ref{alg-step:LC-M-sample}. Note that if $Y_{b,l}=0$ for all $l\in \mathcal{L}_b$, then it means that bus $b$ is not utilized in the randomized solution. For a given $k\in [K]$ and $\rho \geq 1$, we have that 
    \begin{align}
        &P\left(\sum_{b\in [M]}\sum_{l\in \mathcal{L}_b}Y_{b,l}\cdot c_{b,l}^{(k)}\geq (1+\rho)(1-\epsilon)\right)
        =P\left(\sum_{b\in [M]}\sum_{l\in \mathcal{L}_b}Y_{b,l}\cdot c_{b,l}^{(k)}\cdot J\geq (1+\rho)\cdot J\cdot (1-\epsilon)\right)\nonumber\\
        \leq& \left(\frac{e^{\rho}}{(1 + \rho)^{1 + \rho}}\right)^{J \cdot (1 - \epsilon)}
        \leq 
        \left\{
\begin{array}{ll}
     e^{-\rho\cdot J\cdot  (1-\epsilon)/3} & \text{if } \rho \geq 1, \\
     e^{-\rho^2\cdot J\cdot  (1-\epsilon)/3} & \text{if } \rho \leq 1.
\end{array}
\right.\label{eq:chernoff}
    \end{align}
    The last two inequalities are due to  $\sum_{l\in \mathcal{L}_b}Y_{b,l}\cdot c_{b,l}^{(k)}\cdot J\in [0,1]$, and $\sum_{b\in [M]}\sum_{l\in \mathcal{L}_b}\E \left[Y_{b,l}\right]\cdot c_{b,l}^{(k)}\leq  (1-\epsilon)$,  and the regular derivations of Chernoff bounds. 

For a non-negative integer $\psi$, let \( \mathcal{B}_\psi \) be the event that  Step \ref{alg-step:LC-M-rounding}  returns a randomized solution which utilizes at least \( \frac{\psi}{J} \) units for one or more resources, but does not exceed \( \frac{\psi+1}{J} \) units for any resource. We define $P_\psi := P(\mathcal{B}_\psi)$ and
\[
E_\psi := \E\left[\text{Objective value of the randomized solution generated by  Step \ref{alg-step:LC-M-rounding}} ~\Big\vert~ \mathcal{B}_\psi\right].\]

% \noindent With this notation, \( \sum_{i=0}^{J}E_i\cdot P_i \) is the expected objective value of the solution yielded by Algorithm \ref{alg:low-costs-feasible}. 

Let $\Gamma$ be the objective value of the optimal LP solution, then based on Theorem \ref{thm:main}, we have that
\begin{align*}
    &\left(1-\frac{1}{e}\right)\cdot \left(1-\epsilon\right)\Gamma
    \leq  \sum_{i=0}^{J-1}E_{i}\cdot P_{i} + \sum_{i=J}^{\infty}E_{i}\cdot P_{i} 
    \leq \sum_{i=0}^{J-1}E_{i}\cdot P_{i} + \sum_{i= J}^{\infty}\frac{i+1}{J}\cdot \Gamma\cdot P_{i}\\
    =& \sum_{i=0}^{J-1}E_{i}\cdot P_{i} + \sum_{i= J}^{\infty}\left(1+\frac{i-J+1}{J}\right)\cdot \Gamma\cdot P_{i}
    =\sum_{i=0}^{J-1}E_{i}\cdot P_{i} + \Gamma\cdot \sum_{i = J}^\infty P_{i} + \frac{\Gamma}{J}\sum_{i= J}^{\infty}\sum_{j = i}^\infty P_{j}
\end{align*}
Thus, we have 

\begin{align}\label{eq:LC-M-approx-LB}
    \sum_{i=0}^{J-1}E_{i}\cdot P_{i} \geq \Gamma\cdot\left[ \left(1-\frac{1}{e}\right)\cdot \left(1-\epsilon\right) -  \sum_{j = J}^\infty P_{j} - \frac{1}{J}\sum_{i= J}^{\infty}\sum_{j = i}^\infty P_{j}\right].
\end{align}

\noindent By the definition of $P_\psi$, we have that

$$\sum_{j = i}^\infty P_{j} = P( \mbox{The randomized solution
utilizes at least $\frac{i}{J}$ units for one or more resources.} )$$

\noindent Recall that  $\delta := \frac{\epsilon}{1-\epsilon}$, and $\delta_i := \delta +\frac{i-J}{J\cdot (1-\epsilon)}$. Then by (\ref{eq:chernoff}) we have that

\begin{align}\label{modified-alg-ineq-1}
    \sum_{j = J}^\infty P_{j}
\leq K\cdot \left(\frac{e^{\delta}}{(1 + \delta)^{1 + \delta}}\right)^{J \cdot (1 - \epsilon)}%,~\mbox{ for } \delta = \frac{\epsilon}{1-\epsilon}.
\end{align}

% \begin{align*}
%     &\sum_{i= J}^{\infty}\sum_{j = i}^\infty P_{j}
% \leq K\cdot \sum_{i= J}^{\infty}e^{-\left(\delta +\frac{i-J}{J\cdot (1-\epsilon)}\right)\cdot J\cdot  (1-\epsilon)/3} 
% =K\cdot  \sum_{i= 0}^{\infty}e^{-\left(\epsilon\cdot J +i\right)/3}\\
% =& K\cdot e^{-\epsilon\cdot J /3} \sum_{i= 0}^{\infty}e^{-i/3}
% \leq K\cdot  e^{-\epsilon\cdot J /3} \cdot \left(1 + \int_{x = 0}^\infty e^{-x/3}dx\right)
% = 4K\cdot e^{-\epsilon\cdot J /3},~ \mbox{ for }\epsilon \in \left[\frac{1}{2},1\right)
% \end{align*}

\begin{align*}
    &\sum_{i= J}^{\infty}\sum_{j = i}^\infty P_{j}
\leq K\cdot \left(\sum_{i= J}^{\widehat{J}-1} \left(\frac{e^{\delta_i}}{(1 + \delta_i)^{1 + \delta_i}}\right)^{J \cdot (1 - \epsilon)} 
+ \sum_{i= \widehat{J}}^{\infty}e^{-\delta_i\cdot J\cdot  (1-\epsilon)/3}\right).
\end{align*}

\noindent Since

\begin{align*}
\sum_{i= \widehat{J}}^{\infty}e^{-\delta_i\cdot J\cdot  (1-\epsilon)/3} =& \sum_{i= \widehat{J}}^{\infty}e^{-\left(\delta +\frac{i-J}{J\cdot (1-\epsilon)}\right)\cdot J\cdot  (1-\epsilon)/3} 
= e^{-\epsilon\cdot {J} /3} \sum_{i= \widehat{J}-J}^{\infty}e^{-i/3}\\
\leq &  e^{-\epsilon\cdot {J} /3} \cdot \left( e^{- \left(\widehat{J} - J\right)/3} + \int_{x = \widehat{J}-J}^\infty e^{-x/3}dx\right)
=  e^{-\epsilon\cdot {J} /3} \cdot \left(4\cdot e^{- \left(\widehat{J} - J\right)/3}\right),
\end{align*}

\noindent so we have 
\begin{align}\label{modified-alg-ineq-2}
    \sum_{i= J}^{\infty}\sum_{j = i}^\infty P_{j} \leq K\cdot \left(\sum_{i= J}^{\widehat{J}-1} \left(\frac{e^{\delta_i}}{(1 + \delta_i)^{1 + \delta_i}}\right)^{J \cdot (1 - \epsilon)}  + 4e^{-(\epsilon\cdot {J} + \widehat{J} - J) /3}\right)
\end{align}

\noindent By substituting (\ref{modified-alg-ineq-1}) and (\ref{modified-alg-ineq-2}) into (\ref{eq:LC-M-approx-LB}), we directly obtain the desired result.
\end{proof}

\begin{remark}
    Although the analysis in Section \ref{sec:low-cost} employs weaker bounds compared to the analysis presented in this section, it is sufficient to derive the desired result and ensures a clear and understandable proof.
\end{remark}

\section{Two heuristics involved in Algorithm \ref{alg:low-cost-modified}}\label{sec:heuristics}

\noindent \textbf{Heuristic H1}.\\
A solution that does not meet some resource constraints is given. For each resource type $k$, let $C_k$ be the solution's total cost for resource type $k$ if this resource is overused, and be $1$ otherwise. For each used bus $b$ assigned to line $l$, its excess cost for each resource type $k$ is calculated as $\min \left \{c^{(k)}_{b,l}, C_k-1 \right\}$.  Then, the buses used by the solution are ranked by their reward-cost ratio, which is calculated as the ratio of the reward generated by the bus to the sum of the bus' excess costs across all resource types. The bus with the lowest reward-cost ratio is removed, then the ranking is updated, and the process is repeated until all resource constraints are met.

\noindent \textbf{Heuristic H2}.\\
\emph{This heuristic assigns the remaining unserved passengers to the \emph{already-selected} buses only (no new buses are added), using residual arc capacities along their selected lines.}

\begin{enumerate}
\item Compute residual demands $d_{(i,j)}$ and residual capacities $r_{b,l}(e) := C_b - \sum_{(i,j) \in D^{b,l}(e)} \xi_{b,l,(i,j)}$ for each selected bus-line pair $(b,l)$ and arc $e \in l$, where $\xi_{b,l,(i,j)}$ represents passengers already assigned from the temporary solution after Step~\ref{alg-step:LC-M-remove} (i.e., after Heuristic H1).

\item  For each selected bus-line pair $(b,l)$ and OD pair $(i,j) \in D^{b,l}$ with $\tilde{d}_{(i,j)} > 0$, compute the bottleneck capacity $\mathrm{bn}_{b,l,(i,j)} := \min_{e \in E_{(i,j)}} r_{b,l}(e)$ where $E_{b,l,(i,j)}$ is the set of arcs traversed. Include $(b,l,(i,j))$ in candidate set $\mathcal{T}$ if $\mathrm{bn}_{b,l,(i,j)} > 0$.

\item Sort $\mathcal{T}$ by: (1) nonincreasing value $v_{b,l,(i,j)}$, (2) nondecreasing subpath length $|E_{b,l,(i,j)}|$, (3) nonincreasing bottleneck $\mathrm{bn}_{b,l,(i,j)}$.

\item For each $(b,l,(i,j)) \in \mathcal{T}$ in sorted order:
\begin{itemize}
\item Skip if $\tilde{d}_{(i,j)} = 0$.
\item Recompute current bottleneck: $\mathrm{bn} := \min_{e \in E_{b,l,(i,j)}} r_{b,l}(e)$.
\item Set $h := \min\{\tilde{d}_{(i,j)}, \mathrm{bn}\}$.
\item If $h > 0$, update: $\xi_{b,l,(i,j)} \leftarrow \xi_{b,l,(i,j)} + h$, $\tilde{d}_{(i,j)} \leftarrow \tilde{d}_{(i,j)} - h$, and $r_{b,l}(e) \leftarrow r_{b,l}(e) - h$ for all $e \in E_{b,l,(i,j)}$.
\end{itemize}

\item The heuristic completes after examining each candidate in $\mathcal{T}$ exactly once. The algorithm makes a single pass through the sorted candidate list without iteration. Some candidates may result in no assignment if capacity becomes unavailable due to earlier assignments. Any remaining unserved demand after all candidates are processed remains unassigned.
\end{enumerate}

\end{APPENDICES}

\end{document}